\documentclass[a4paper, reqno]{amsart}
\usepackage[margin=1in]{geometry}
\usepackage{enumerate}
\usepackage{amsthm}
\usepackage{amsmath}
\usepackage[mathscr]{euscript}
\usepackage{amssymb}
\usepackage[noadjust]{cite}
\usepackage{comment}
\usepackage{color}
\usepackage{filecontents}
\usepackage{tabularx}

\usepackage{tikz}
\usetikzlibrary{calc,positioning,fit,backgrounds}
\pgfdeclarelayer{background}
\pgfsetlayers{background,main}
\usepackage{caption}

\usepackage{color} 
\usepackage{graphicx}

\usepackage{amssymb,amsthm,amsmath,mathrsfs}
\usepackage{graphicx}
\usepackage{multicol}
\usepackage{color}
\usepackage{tikz}
\usepackage{tikz-cd}
\usepackage[hypertexnames=false]{hyperref}
\hypersetup{
	colorlinks=true,
	linkcolor=blue,
}

\newtheorem*{genericthm*}{\thistheoremname}
\newcommand{\thistheoremname}{???}
\newcounter{genericthm}

\newenvironment{namedthm*}[1]
{\renewcommand{\thistheoremname}{#1}%
	\refstepcounter{genericthm}%
	\begin{genericthm*}}
	{\end{genericthm*}}

\newtheorem{thm}{Theorem}[section]
\newtheorem{lem}[thm]{Lemma}
\newtheorem{prop}[thm]{Proposition}
\newtheorem{cor}[thm]{Corollary}

\theoremstyle{definition}

\newtheorem{defn}[thm]{Definition}

\theoremstyle{remark}

\newtheorem{rem}[thm]{Remark}

\begin{document}
\title{Hilbert transforms on graph products of finite von Neumann algebras}

\author[Lu and Xia]{Xiao-Qi Lu \and Runlian Xia}
	
\address[Xiao-Qi Lu]{School of mathematics and Statistics, University of Glasgow, University Avenue, Glasgow G12 8QQ, UK}
\email{Xiaoqi.Lu@glasgow.ac.uk}

\address[Runlian Xia]{School of mathematics and Statistics, University of Glasgow, University Avenue, Glasgow G12 8QQ, UK}
\email{Runlian.Xia@glasgow.ac.uk}

\keywords{Hilbert transforms, graph products of von Neumann algebras, noncommutative Cotlar identities, Haagerup-type inequality}

\subjclass[2020]{Primary: 46L07, 46L10, 46L52}

\begin{abstract}
 We study Hilbert transforms on graph products of finite von Neumann algebras, with particular interests on their boundedness on the associated noncommutative $L_p$-spaces for $1<p<\infty$. We establish a generalized Cotlar identity for Hilbert transforms, valid on operators whose lengths exceed a constant depending only on the underlying graph. We further prove that graph products of finite von Neumann algebras satisfying a Haagerup-type inequality admit $L_p$-bounded Hilbert transforms, therefore extending the corresponding result of Mei and Ricard for free products of finite von Neumann algebras. In addition, we obtain several equivalent characterizations of this Haagerup-type inequality and show, in particular, that it is equivalent to the graph product being generated by finite-dimensional von Neumann algebras with uniformly bounded dimensions.
Our results apply, in particular, to graph products of finite groups, right-angled Hecke von Neumann algebras, and graph products of finite quantum groups. As an application, we provide positive answers to a compactness problem posed by Ozawa in the setting of graph products of finite groups and right-angled Hecke von Neumann algebras.
\end{abstract}

\maketitle

\section{Introductions}


The mapping properties (boundedness) of Hilbert transforms is one of the central problems in harmonic analysis. In the classical setting, the Hilbert transform on the real line $\mathbb{R}$ may be viewed as the Fourier multiplier with symbol $-i\,\mathrm{sgn}$, namely $(Hf)^\wedge(\xi)=-i\,\mathrm{sgn}(\xi)\,\widehat{f}(\xi)$,
see \cite{Duoan2001Book}. Equivalently, $H$ may be formally written as
\[
H
=
-iP_+ + iP_-,
\]
where $P_+$ and $P_-$ denote the Fourier multipliers with symbols
$\chi_{\{\xi \in \mathbb{R} : \xi >0\}}$  and $\chi_{\{\xi \in \mathbb{R} : \xi <0\}}$,
respectively. Observe that $P_+$ and $P_-$ are orthogonal projections on $L_2(\mathbb{R})$. In 1924, M.~Riesz established the $L_p$-boundedness of the Hilbert transform for all $1<p<\infty$ using methods from complex analysis \cite{Riesz1924Conjugate,Riesz1928ConjugateMZ}. Later, in 1955, Cotlar gave an alternative algebraic proof \cite{cotlar1955unified}. More precisely, he showed that $H$ satisfies the identity
\begin{equation}
\label{eq:ClassicalCotlar}
\tag{Classical Cotlar}
(Hf)^2
=
2H(f \cdot Hf) + f^2.
\end{equation}
Note that for the Hilbert transform, we have $H^2=-\rm{id} $ whenever it is well-defined. Therefore, the Cotlar identity above can be rewritten as
\begin{equation*}
  \big( H f \big)^2 \, = \, 2 H \big( f \cdot H f \big) -  H \big( H (f^2) \big).
\end{equation*}
Using this identity, one obtains recursively the $L_p$-boundedness for $p=2^k$, starting from the trivial case $p=2$. The full range $1<p<\infty$ then follows by duality and the Riesz-Thorin interpolation theorem.

Noncommutative analogues of the Hilbert transform and the Cotlar identity were introduced by Mei and Ricard in \cite{mei2017free} in the setting of Voiculescu's amalgamated free products of von Neumann algebras. Their construction is defined as an unconditional sum of projections onto reduced operators with prescribed initial syllables. Subsequently, in \cite{gonzalez2022noncommutative}, Gonz\'alez-P\'erez, Parcet, and the second-named author extended these results to von Neumann algebras associated with groups acting on tree-like structures, revealing a connection between noncommutative Cotlar identities and Bass-Serre theory in geometric group theory. More recently, the authors of the present paper investigated analogous questions for groups acting on buildings in \cite{lu2025hilbert}.

In this paper, we study Hilbert transforms in a different direction, namely in the setting of graph products of von Neumann algebras, which simultaneously generalize free products and tensor products of  von Neumann algebras. Graph products of operator algebras were introduced by Caspers and Fima in \cite{caspers2017graph}, and have since become an active area of research. A number of structural and approximation properties for graph products of operator algebras have been established, including Khintchine inequalities \cite{caspers2021graph}, the completely contractive approximation property (CCAP) \cite{BORST2024110350}, and rigidity results \cite{CasChen25, ChiDaDri25, ChiDaDri252,borst2024rigid, HorIoa25}. In free probability, graph products of $*$-algebras are also crucial in the study of $\Lambda$-freeness, see \cite{mlotkowski2004lambda}, where this notion was introduced by M{\l}otkowski. 

Among their many applications, the mapping properties of Hilbert
transforms play a fundamental role in the $L_p$-convergence theory of
partial Fourier series and Fourier integrals on Euclidean spaces. In
fact, by elementary arguments, the $L_p$-boundedness of the Hilbert
transform implies that the frequency truncations of a function
$f\in L_p(\mathbb{R})$ to the intervals $[-N,N]$, for
$N\in\mathbb{N}$, converge to $f$ in the $L_p$-norm as $N\to\infty$.
These truncation operators form an approximate identity on
$L_p(\mathbb{R})$.
In higher dimensions, the interval $[-N,N]$ may be replaced by
$N\cdot\mathcal{P}$, where
$N\cdot\mathcal{P}\subseteq\mathbb{R}^n$ denotes the dilation of a
convex polyhedron $\mathcal{P}$ containing the origin. See \cite{Alfonseca2003, AlfonSoriaVargas2003, Bateman2009kakeya, ParcetRogers2015Directions} for results and related open questions in this directions. 
Consequently, the study of Hilbert transforms on general von Neumann algebras may
lead to applications to approximation properties in noncommutative
$L_p$-spaces. For instance, the Hilbert transforms obtained in
\cite{lu2025hilbert} yield approximate identities on the
noncommutative $L_p$-spaces associated with certain affine Coxeter groups.

On the other hand, the  mapping properties of Hilbert transforms on von Neumann algebras may also provide new tools for studying rigidity and structural results in operator algebras. In recent work, Houdayer and Ioana \cite{houdayer2024asymptotic} used the $L_p$-boundedness of free Hilbert transforms established in \cite{mei2017free} to prove asymptotic freeness in tracial ultraproduct von Neumann algebras. They also constructed the first example of a $\mathrm{II}_1$ factor without property Gamma that is not elementarily equivalent to any free product of diffuse tracial von Neumann algebras. These developments further motivate our study of the $L_p$-boundedness of Hilbert transforms on graph products of von Neumann algebras.

Moreover, at the end of \cite{ozawa2010comment}, Ozawa posed a
compactness problem for free group factors and observed that the
boundedness of the Hilbert transform on
$L_4(\mathcal{L}\mathbb{F}_n)$ would yield a positive solution to the
problem. This question was later resolved in \cite{mei2017free}.
In Section~\ref{sec; Ozawa-application}, we provide positive answers
to Ozawa's problem in the setting of graph products of finite groups
and right-angled Hecke von Neumann algebras. In particular, it follows from \cite[Notes 19.2]{davis2012geometry} that graph products of finite groups associated with
non-affine irreducible Coxeter systems are ICC groups. 
Furthermore, it was shown in
\cite{garncarek2016factoriality} that certain right-angled Hecke von
Neumann algebras are $\mathrm{II}_1$ factors. So our results provide
new examples beyond the free group factors originally considered by
Ozawa.

Compared to the free product case,  the study of Hilbert transforms on graph products of von Neumann algebras is  more delicate, since reduced operators may admit several equivalent reduced expressions. In particular, the initial syllable of a reduced operator is no longer uniquely determined.  To overcome this difficulty, we introduce a natural analogue of the free Hilbert transform in the setting of graph products, and show that it satisfies a generalized Cotlar identity. Our motivating example is  the von Neumann algebra associated with a graph product of groups.

\subsection*{Graph products of groups}

Graph products of groups (introduced by Green in \cite{green1990graph}) are closely related to right-angled Coxeter groups.
 A right-angled Coxeter group can be defined by its \textit{presenting graph}, which is a simplicial graph $\Gamma = (V\Gamma, E\Gamma)$  with vertex set $V\Gamma$ and edge set $E\Gamma$.
The associated \emph{right-angled Coxeter group $W_{\Gamma}$} is defined by 
$$W_{\Gamma} := \langle s\in V\Gamma: s^2 = e,\;  (st)^2 = e\; \text{if}\; \{s,t\}\in E\Gamma \rangle.$$ 
The relations $(st)^{2} = e$ for $\{s,t\}\in E\Gamma$ 
are referred to as \textit{braid relations}.  

For any nontrivial element $ w \in W_{\Gamma}$, a \textit{reduced expression} is a word $s_1\ldots s_n$ with $n\geq 1$, of minimum length such that $w = s_1\ldots s_n$ and $s_i\ne s_{i+1}$ for all $i= 1,\ldots, n-1$. 
Reduced expressions need not be unique, due to the braid relations. Let $\mathcal{W}_{\Gamma}$ denote the set of all reduced expressions of  elements in $W_{\Gamma}$, whose elements we call \textit{reduced words}.
 We write $\textbf{w} \simeq \textbf{v}$ if $\textbf{w}\in \mathcal{W}_{\Gamma}$
can be transformed into $\textbf{v}\in \mathcal{W}_{\Gamma}$ via the braid relations. In this case, they represent the same group element. 

To each vertex $s \in V\Gamma$ associate a nontrivial discrete group $G_s$. The \emph{graph product of the family $\{G_s : s \in V\Gamma\}$} is the discrete  group 
    $$G_{\Gamma} := (\underset{s\in V\Gamma}{\ast}G_s)/N,$$
    where $N$ is the normal subgroup generated by the commutators $\{g_tg_sg_t^{-1}g_s^{-1}: g_t\in G_t, g_s\in G_s\; \text{and}\; \{t,s\}\in E\Gamma\}$. 
By construction, the subgroups $G_s$ and $G_t$ commute in $G_\Gamma$ if and only if $\{s,t\} \in E\Gamma$.
Every nontrivial element $g \in G_\Gamma$ admits a \emph{reduced expression}
\[
g=g_1g_2\cdots g_n,
\]
 where \(g_i\in G_{s_i}\setminus\{e\}\) for $i = 1,\dots,n$, and  $s_{1}\cdots s_{n}$ is a reduced word which lies in $\mathcal{W}_\Gamma$.  
In this case, we say that $g$ has \emph{type} $s_{1}\cdots s_{n}$. Two reduced expressions  of $g$ are \emph{equivalent} if and only if they have equivalent types. The integer $n$ is called the \emph{block length} of $g$, denoted by $\ell(g)$. 
Define the \emph{set of initial
syllables} of $g$ by
\[
\operatorname{Init}(g)
=
\{h\in G_s\setminus\{e\} : s\in V\Gamma,\ \ell(h^{-1}g)=\ell(g)-1\}.
\]
Equivalently, \(\operatorname{Init}(g)\) is the set of syllables that may occur as
the first syllable of some reduced expression for $g$. 
We define the \emph{maximal clique prefix} of \(g\) to be 
\[
\mathrm{MCP}(g)=\prod_{h\in \operatorname{Init}(g)} h.
\]
The factors in the product above commute pairwise, so the product is well-defined. 


If $G_s = \mathbb{Z}_2$ for all $s \in V\Gamma$, then $G_\Gamma = W_\Gamma$ is the right-angled Coxeter group associated with $\Gamma$.
When the graph $\Gamma$ has no edges, one has $
G_\Gamma= * _{s\in V\Gamma}G_s,
$
the free product of the groups $G_s$. If $\Gamma$ is complete, that is,  every two vertices are connected by an edge, then
$
G_\Gamma= \times _{s\in V\Gamma}G_s,
$
the direct product of the groups $G_s$.
Thus, graph products of groups simultaneously generalize free products and direct products.



\subsection*{Hilbert transforms on graph products of groups} 
For a discrete group $G$, its group von Neumann algebra is defined by
$$
  \mathcal{L}G
:= \overline{\mathrm{span}}^{\,w^*}\{\lambda_g : g \in G\}
\subseteq B(\ell_2 G).
$$
The associated noncommutative $L_p$-spaces $L_p(\mathcal{L}G)$ were introduced in \cite{Terp1981lp, pisier2003non}. When $G$ is abelian, these spaces recover the classical $L_p$-spaces over the Pontryagin dual of $G$. Consequently, many problems from classical Fourier analysis admit natural noncommutative analogues in this setting. 

Now let $G = G_\Gamma$, a graph product of groups associated with a simplicial graph $\Gamma$, and let $\varphi_{\Gamma}$ be the associated normal faithful tracial state. 
For each $s \in V\Gamma$, let $\mathscr{L}_s$ denote the subset of $G_\Gamma$ consisting of elements whose equivalent reduced expressions all have types beginning with $s$.
 In particular, when $G_\Gamma$ is a free product of $G_s$,  
$\mathscr{L}_s$  consists of elements whose unique reduced expressions begin with a syllable in $G_s$; when $G_\Gamma$ is a direct product of $G_s$, $\mathscr{L}_s=G_s\setminus \{e\}$ for every $s$; when $G_\Gamma=W_\Gamma$ with 
\begin{center}
\begin{tikzpicture}
		\coordinate[label = below: $s_1$] (s) at (0,0);
		\coordinate[label = below: $s_2$] (t) at (1,0);
		\coordinate[label = below: $s_3$] (r) at (2,0);
		
		\node at (s)[circle, fill, inner sep=1pt]{};
		\node at (t)[circle, fill, inner sep=1pt]{};
		\node at (r)[circle, fill, inner sep=1pt]{};

		 \draw [black] (-1,0) coordinate (l1) node {$\Gamma = $};
		\draw (s)--(t) node[above, midway]{};
	\end{tikzpicture}
    \end{center}
 $\mathscr{L}_{s_1}$ (resp. $\mathscr{L}_{s_2}$) consists of $G_{s_1}\setminus \{e\}$ (resp. $G_{s_2}\setminus \{e\}$) together with all elements whose reduced expressions have types beginning with  $s_1s_3$ (resp. $s_2s_3$), and $\mathscr{L}_{s_3}$ consists of elements whose reduced expressions have types beginning with $s_3$. 
 For each clique $\Gamma_0\in \mathrm{Cliq}(\Gamma,\ge 2)$ (see Definition \ref{defn:clique}(iii)), define $\mathscr{L}_{\Gamma_0} := \{g\in G_{\Gamma}: \mathrm{MCP}(g) \text{ has type } \prod_{s\in V\Gamma_0} s\}$.
 Let $\widehat{f}= \sum_{g \in G_\Gamma} \widehat{f}(g)\delta_g$ be a finitely supported function on $G_\Gamma$. Its associated noncommutative Fourier series  $f= \sum_{g \in G_\Gamma} \widehat{f}(g)\lambda_g$ defines an element of $L_p(\mathcal{L}G_\Gamma)$. For such $f$, define
\[\mathcal{L}_s f:= \sum_{g \in \mathscr{L}_s} \widehat{f}(g)\lambda_g, \quad \text{ and } \quad \mathcal{L}_{\Gamma_0} f:= \sum_{g \in \mathscr{L}_{\Gamma_0}} \widehat{f}(g)\lambda_g.\]
It is immediate that the operators $\mathcal{L}_s$ and $\mathcal{L}_{\Gamma_0}$ are orthogonal projections on $L_2(\mathcal{L}G_\Gamma)$. The natural questions are whether these projections extend to bounded maps on $L_p(\mathcal{L}G_\Gamma)$ for $1 < p < \infty$, and whether the decomposition $G_\Gamma=\{e\}\,\cup\, \bigcup_{\Gamma_0 \in \mathrm{Cliq}(\Gamma,\ge 2)}\mathscr{L}_{\Gamma_0} \,\cup\, \bigcup_{s \in V\Gamma} \mathscr{L}_s$ with respect to the maximal clique prefix
is unconditional. With this in mind, and motivated by the classical Hilbert transform, we define its analogue on graph products of groups by \[H_\varepsilon:=  \varepsilon_0 \varphi_\Gamma + \sum_{s \in V\Gamma} \varepsilon_s \mathcal{L}_s+ \sum_{\Gamma_0\in \mathrm{Cliq}(\Gamma,\ge2)} \varepsilon_{\Gamma_0} \mathcal{L}_{\Gamma_0} ,\]
where $\varepsilon_0, \varepsilon_{\Gamma_0}, \varepsilon_s \in \{\pm1\}$ for all $s \in V\Gamma$ and $\varepsilon=(\varepsilon_0,\varepsilon_{\Gamma_0}, \varepsilon_s)$. 

The Hilbert transforms introduced above also admit other
variations. Let $d\geq 1$ and let $h\in G_\Gamma$ be an element such
that $\ell(h)=d$. Denote by $\mathscr{L}_h$ the subset of $G_\Gamma$
consisting of those elements $a$ such that, whenever $a=a'a''$
is a factorization such that 
$\ell(a)=\ell(a')+\ell(a'')$ with $\ell(a')=d$, one necessarily has
$a'=h$. Define $\mathscr{L}_{C,d}:= \{g\in G_\Gamma : \ell(g)\ge d\}\setminus \bigcup_{\ell(h)=d} \mathscr{L}_h$. We then define the projections
\[
\mathcal{L}_h f
=
\sum_{g\in\mathscr{L}_h}\widehat{f}(g)\lambda_g,
\qquad \text{ and } \quad 
\mathcal{L}_{C,d}f
=
\sum_{g\in\mathscr{L}_{C,d}}\widehat{f}(g)\lambda_g .
\]
The Hilbert transform on graph products of groups depending on the
first $d$ blocks is defined by
$$
H_{\varepsilon}^{\mathcal{L},d} := \varepsilon_0 P_{d-1} + \sum_{\ell(h)=d} \varepsilon_h\mathcal{L}_h + {\varepsilon}_d \mathcal{L}_{C,d},
$$
where $\varepsilon = (\varepsilon_0,\varepsilon_h, {\varepsilon}_d)_{\ell(h)=d}$ with $\varepsilon_0, \varepsilon_h, {\varepsilon}_d\in \{\pm1\}$.

The main goal of this paper is to study the boundedness of
$H_\varepsilon$, and more generally of $H_\varepsilon^{\mathcal{L}, d}$, for graph
products of von Neumann algebras, which include graph products of
groups as a special case.

\subsection*{Our main results}
The main tool in the existing literature for establishing the boundedness of Hilbert transforms on von Neumann algebras is the noncommutative Cotlar identity, introduced in \cite{mei2017free} for amalgamated free products of finite von Neumann algebras. In that setting, the identity holds modulo the amalgamated von Neumann subalgebra, while the remaining part is controlled using properties of the associated conditional expectation. Variants of this strategy have also been applied to von Neumann algebras associated with groups acting on suitable geometric structures; see \cite{gonzalez2022noncommutative,lu2025hilbert}. In this setting, the Cotlar identity holds outside a subgroup arising as the stabilizer of a fixed vertex or half-space.
 However, this approach cannot be applied to graph products, mainly due to the presence of commutation relations. Instead, our key observation in this paper is that, in the graph product setting, a suitable noncommutative Cotlar identity emerges only after restricting to operators whose  length exceeds a fixed constant determined by the commutation structure of the graph product of von Neumann algebras $\mathcal{M}_{\Gamma}$.

We assume that the graph $\Gamma$ satisfies the following condition:
\begin{equation}\label{eq; Fi1}\tag{Fi}
\text{there exists } N \ge 0 \text{ such that } |\mathrm{Link}(s)| \le N\quad \text{for all } s \in V\Gamma. 
\end{equation}
Under this assumption, the Hilbert transform $H_\varepsilon$ in Definition \ref{defn;Hilbert transform} satisfies a generalized Cotlar identity involving the projection $P_{>3N}$ from $\mathcal{M}_{\Gamma}$ onto the $L_2$-closed subspace generated by reduced operators of length strictly greater than $3N$. Define $H_{\varepsilon}^{\mathrm{op}}(x):= H_{\varepsilon}(x^*)^*$,  for $x \in \mathcal{M}_{\Gamma}$.

\begin{namedthm*}{Theorem A}\label{Thm A}
Let $\Gamma$ be a simplicial graph satisfying~\eqref{eq; Fi1}, let $x,y \in \mathcal{M}_{\Gamma}$, and let $\varepsilon,\varepsilon' \in \{\pm1\}^{2+|V\Gamma|}$. 
Then
\begin{equation}\label{eq;Cotlar-ab}\tag{Cotlar}
		P_{> 3N}\big(H_{\varepsilon}(x)H_{\varepsilon'}^{\mathrm{op}}(y^*)\big) = P_{> 3N}\Big(	H_{\varepsilon}\big(x	H_{\varepsilon'}^{\mathrm{op}}(y^*)\big) + 	H_{\varepsilon'}^{\mathrm{op}}\big(	H_{\varepsilon}(x)y^*\big)- 	H_{\varepsilon'}^{\mathrm{op}}	H_{\varepsilon}(xy^*)\Big).
     \end{equation}  
\end{namedthm*}

Once we show that  $H_\varepsilon$ satisfies~\eqref{eq;Cotlar-ab}, the remaining difficulty in proving its boundedness on $L_p(\mathcal{M}_{\Gamma})$ is to control its restriction to $P_{\le 3N}(\mathcal{M}_{\Gamma})$. We overcome this difficulty by establishing a new connection to a Haagerup-type inequality for the underlying von Neumann algebra.

As we will see in Theorem~\ref{thm;Hilbert-trans-finite-vnA}, this can be achieved under the assumption that $P_{\le 3N}$ is ultracontractive, that is, it is bounded from $L_2(\mathcal{M}_{\Gamma})$ to $\mathcal{M}_{\Gamma}$.
In Section~\ref{sec; Haagerup-type-inequality}, we explain that this hypothesis is closely related to the Haagerup inequality introduced by Haagerup for free groups \cite{haagerup1978example}, which asserts that the operator norm of elements supported on group elements of a fixed word length is controlled by a polynomial in the length times the $L_2$-norm. This type of estimate was later extended to more general groups by Jolissaint in \cite{jolissaint1990rapidly}, who formalized it as the property of rapid decay (property (RD)); see Subsection~\ref{subsection;group vnA} for further discussion.

In our setting, we will show in Proposition \ref{thm;equivalence-Pd-Haagerup-type} that the assumption $P_{\le 3N}$ is bounded from $L_2(\mathcal{M}_{\Gamma})$ to $\mathcal{M}_{\Gamma}$ is precisely a Haagerup-type inequality for $\mathcal{M}_{\Gamma}$, that is,  for every $d \ge 1$, the projection $P_d$ from $\mathcal{M}_{\Gamma}$ onto the homogeneous $L_2$-closed subspace generated by reduced operators of length $d$ extends to a bounded map from $L_2(\mathcal{M}_{\Gamma})$ to $\mathcal{M}_{\Gamma}$.
Moreover, this condition is equivalent to saying  that $\mathcal{M}_{\Gamma}$ is a graph product of finite-dimensional von Neumann algebras satisfying
\begin{equation}\label{eq;M-finite-dimentional-condition}
\sup_{s \in V\Gamma}
\bigl\{
n_s : \dim(\mathcal{M}_s)=n_s
\bigr\}
< \infty.
\end{equation}
 As an application of \ref{Thm A} and Proposition \ref{thm;equivalence-Pd-Haagerup-type}, we obtain the second main result of the paper which asserts the boundedness of our Hilbert transforms.

\begin{namedthm*}{Theorem B}\label{Thm B}
	Let $1<p<\infty$, let $\Gamma$ be a simplicial graph satisfying (\ref{eq; Fi1}), and let $\mathcal{M}_{\Gamma}$ be the graph product of finite-dimensional von Neumann algebras satisfying (\ref{eq;M-finite-dimentional-condition}). Let $H_{\varepsilon}$ and $H_{\varepsilon}^{\mathrm{op}}$ be the Hilbert transforms introduced in Definition \ref{defn;Hilbert transform}. 
    Then there exists a constant $c_p>0$ such that for every $x\in L_p(\mathcal{M}_{\Gamma})$,
    \begin{equation*}
        \|H_{\varepsilon}(x)\|_p\le c_p\|x\|_p\quad \text{and}\quad \|H_{\varepsilon}^{\mathrm{op}}(x)\|_p\le c_p\|x\|_p.
    \end{equation*}
\end{namedthm*}

Examples satisfying the assumptions of \ref{Thm B} include von Neumann algebras associated with graph products of finite groups (see Subsection~\ref{subsection;group vnA}), right-angled Hecke von Neumann algebras (see Subsection~\ref{subsection;Hecke}), and graph products of finite quantum groups (see Subsection~\ref{sec:quantum-groups}). 

Note that when the graph $\Gamma$ has no edges, that is, in the free product case, we no longer need to deal with $P_{\le 3N}(\mathcal{M}_{\Gamma})$. Consequently, the assumption in \ref{Thm B} that  von Neumann algebras are finite-dimensional and satisfy (\ref{eq;M-finite-dimentional-condition}) can be removed. In this setting, our result recovers the main theorem of \cite[Theorem 3.5]{mei2017free}.
\medskip

For the Hilbert transform $H_{\varepsilon}^{\mathcal{L},d}$ depending on the
first $d$ blocks, one may
follow a strategy similar to the one above to obtain its boundedness.
\begin{namedthm*}{Theorem C}\label{Thm C}
Let $1<p<\infty$, let $\Gamma$ be a simplicial graph satisfying (\ref{eq; Fi1}), and let  $\mathcal{M}_{\Gamma}$ be the graph product of finite-dimensional von Neumann algebras satisfying (\ref{eq;M-finite-dimentional-condition}). Let $H_{\varepsilon}^{\mathcal{L},d}$ and $H_{\varepsilon}^{\mathcal{R},d}$ be the Hilbert transforms introduced in Definition \ref{def; d Hilbert}.
Then there exists a constant $C_p>0$ such that, for every
$x\in L_p(\mathcal{M}_{\Gamma})$,
\[
\|H_{\varepsilon}^{\mathcal{L},d}(x)\|_p
\leq
C_p \|x\|_p
\quad \text{and}\quad 
\|H_{\varepsilon}^{\mathcal{R},d}(x)\|_p
\leq
C_p \|x\|_p .
\]
\end{namedthm*}

As an application of \ref{Thm C}, we obtain a positive answer
to Ozawa's problem for graph products of finite groups and for
right-angled Hecke von Neumann algebras, providing new examples beyond the free group factors. We refer the reader to
Corollary~\ref{cor; Ozawa's problem} for further details.

\section{Preliminaries}

We begin by fixing some standard notation from graph theory and recalling the necessary background material. For the construction and basic properties of graph products of von Neumann algebras and noncommutative $L_p$-spaces, we refer the reader to \cite{caspers2017graph} and \cite{pisier2003non}, respectively.

\begin{defn}\label{defn:clique}
Let $\Gamma$ be a simplicial graph.
\begin{enumerate}
  \item[(i)] Let $s\in V\Gamma$. The \emph{link} of $s$ is defined by $\mathrm{Link}(s) := \{t\in V\Gamma: \{s,t\}\in E\Gamma\}$. 
    More generally, for $T \subseteq V\Gamma$, define $\mathrm{Link}(T) := \bigcap_{t\in T}\mathrm{Link}(t)$ with the convention that $\mathrm{Link}(\emptyset) = V\Gamma$. 
   \item[(ii)]   An \emph{induced subgraph} of a simplicial graph $\Gamma = (V\Gamma,E\Gamma)$ is a subgraph $\Gamma'=(V\Gamma',E\Gamma')$ 
that inherits all the edges $\{s,t\}$ in $E\Gamma$ whenever $s,t\in V\Gamma'$. In this case, we say that $\Gamma'$ is generated by $V\Gamma'$, and the corresponding graph product $G_{\Gamma'}$ canonically embeds as a subgroup of $G_\Gamma$.

\item [(iii)] A \emph{clique} in $\Gamma$ is a subgraph $\Gamma_0$ such that every pair of distinct vertices in $V\Gamma_0$ is connected by an edge.
For $n,k \geq 1$, we denote
 by $\mathrm{Cliq}(\Gamma,n)$ and $\mathrm{Cliq}(\Gamma,\ge k)$, the sets of cliques with $n$ vertices and at least $k$ vertices, respectively. We write $\mathrm{Cliq}(\Gamma)$ for the set of all cliques, and we include the empty set in $\mathrm{Cliq}(\Gamma)$.
\end{enumerate}

\end{defn}

\subsection*{Graph products of Hilbert spaces and von Neumann algebras}
In this subsection, we recall the construction of graph products of Hilbert spaces (Fock spaces) and graph products of von Neumann algebras (see \cite[Section 3]{caspers2017graph}).

Let $\Gamma = (V\Gamma, E\Gamma)$ be a simplicial graph. Let $W_{\Gamma}$ be the associated right-angled Coxeter group  and $\mathcal{W}_{\Gamma}$ the set of all reduced expressions of  elements in $W_{\Gamma}$.
If $\textbf{w} = s_1\ldots s_n\in \mathcal{W}_{\Gamma}$, then for any $\textbf{v} \simeq \textbf{w}$ there exists a unique permutation $\sigma\in S_n$  such that $\textbf{v} = s_{\sigma(1)}\ldots s_{\sigma(n)}$ and 
\begin{equation}\label{eq;tag-CO}
    \sigma(i)<\sigma(j)\; \text{whenever}\; 1\le i<j\le n\; \text{with}\;s_i = s_j,
\end{equation}
see \cite[Lemma 2.3 (4)]{caspers2017graph}. 
For each nontrivial element $w\in W_{\Gamma}$, we fix a reduced expression and denote by $\mathcal{W}_{\Gamma}^{\text{f}}$ the collection of all such chosen representatives.
For each $s\in V\Gamma$, define 
$$\mathcal{W}_{\Gamma_s} := \{\textbf{w}\in \mathcal{W}_{\Gamma}^{\text{f}}: \ell(s\textbf{w})<\ell(\textbf{w})\}.$$ 
Equivalently, $\mathcal{W}_{\Gamma_s}$ consists of those reduced words in $\mathcal{W}_\Gamma^{\mathrm{f}}$ that admit a reduced expression beginning with $s$ (up to the braid relations).
We set  $$\mathcal{W}_{\Gamma_s}^c := \mathcal{W}_{\Gamma}^{\text{f}}\setminus \mathcal{W}_{\Gamma_s},$$
which consists of those reduced words in $ \mathcal{W}_{\Gamma}^{\text{f}}$ for which no equivalent reduced expression begins with $s$.

\medskip

	For each $s\in V\Gamma$, let $\mathcal{M}_s$ be a von Neumann algebra equipped with a normal faithful state $\varphi_s$. Let $(\mathcal{H}_s, {\rm id},\xi_s)$ be the GNS-construction associated with $\varphi_s$, so that we identify $\mathcal{M}_s\subseteq B(\mathcal{H}_s)$. Set $\mathcal{H}_s^\circ := (\mathbb{C}\xi_s)^\perp$. 
    For a word $\mathbf{w} = s_1\ldots s_n\in \mathcal{W}_{\Gamma}$, 
    define 
	$$\mathcal{H}_{\mathbf{w}}^{\circ} = \mathcal{H}_{s_1}^\circ\otimes\ldots\otimes \mathcal{H}_{s_n}^\circ.$$ 
    The \textit{graph product of Hilbert spaces} (or Fock space) $(\mathcal{H},\Omega)$ is defined by 
	$$\mathcal{H} := \mathbb{C}\Omega \oplus \underset{\mathbf{w}\in \mathcal{W}_{\Gamma}^{\text{f}}}{\bigoplus}\mathcal{H}_{\mathbf{w}}^{\circ} =  \mathbb{C}\Omega \oplus \underset{n\ge1}{\bigoplus}\underset{\underset{\ell(\mathbf{w}) = n}{\mathbf{w}\in  \mathcal{W}_{\Gamma}^{\text{f}}}}{\bigoplus}\mathcal{H}_{\mathbf{w}}^{\circ},$$
	where $\Omega$ is a unit vector.
For each $s\in V\Gamma$, set
$$\mathcal{H}(s):= \mathbb{C}\Omega \oplus \underset{\mathbf{w}\in \mathcal{W}_{\Gamma_s}^c}{\bigoplus}\mathcal{H}_{\mathbf{w}}^{\circ}.$$
Let $\mathbf{w} = s_1\ldots s_n\in \mathcal{W}_{\Gamma}$. Then there exists a unique $\mathbf{w}'\in \mathcal{W}_{\Gamma}^{\text{f}}$ such that  $\mathbf{w}
\simeq \mathbf{w}'$ via the braid relations. Moreover, there exists 
 a unique permutation $\sigma\in S_{n}$ satisfying (\ref{eq;tag-CO}) such that $\mathbf{w}' = s_{\sigma(1)}\ldots s_{\sigma(n)}$. We define a unitary operator $Q_{\mathbf{w}}: \mathcal{H}_{\mathbf{w}}^{\circ}\to \mathcal{H}_{\mathbf{w}'}^{\circ}$ by $$Q_{\mathbf{w}}(\xi_1\otimes\ldots\otimes\xi_n) = \xi_{\sigma(1)}\otimes\ldots\otimes\xi_{\sigma(n)}.$$

Fix $s\in V\Gamma$, we define a unitary operator $U_s: \mathcal{H}\to \mathcal{H}_s\otimes \mathcal{H}(s)$ via the following diagram

$$\begin{tikzpicture}[baseline=(current bounding box.center)]
	\matrix (m) [matrix of math nodes, row sep=3em, column sep=9em] {
		\mathcal{H} = (\mathbb{C}\Omega \oplus \mathcal{H}_s^\circ) \oplus \underset{\mathbf{w}\in \mathcal{W}_{\Gamma_s}^c}{\bigoplus}\mathcal{H}_{\mathbf{w}}^{\circ}\oplus \underset{\underset{\ell(\mathbf{w}) \ge 2}{\mathbf{w}\in  \mathcal{W}_{\Gamma_s}}}{\bigoplus}\mathcal{H}_{\mathbf{w}}^{\circ} \\
		 \mathcal{H}_s\otimes \mathcal{H}(s) = (\mathcal{H}_s\otimes\mathbb{C}\Omega)\oplus\underset{\mathbf{w}\in \mathcal{W}_{\Gamma_s}^c}{\bigoplus}(\mathbb{C}\xi_s\otimes \mathcal{H}_{\mathbf{w}}^{\circ})\oplus \underset{\mathbf{w}\in \mathcal{W}_{\Gamma_s}^c}{\bigoplus}(\mathcal{H}_s^\circ\otimes \mathcal{H}_{\mathbf{w}}^{\circ})\\
	};
	
	\draw[->] (m-1-1) ++(2.7,-0.2) -- ++(0,-1) node[midway, right] {};
	\draw[->] (m-1-1) ++(0.4,-0.2) -- ++(0,-1) node[midway, right] {};
	\draw[->] (m-1-1) ++(-1.5,-0.2) -- ++(0,-1) node[midway, right] {};
\end{tikzpicture}$$
by 
\begin{align*}
	U_s(\Omega) & = \xi_s\otimes \Omega;\\
	U_s(\xi) & = \xi\otimes \Omega,\quad \forall \xi\in \mathcal{H}_s^\circ;\\
	U_s(\xi) & = \xi_s\otimes\xi, \quad \forall \xi\in \mathcal{H}_{\mathbf{w}}^{\circ} \, \text{with } \mathbf{w}\in  \mathcal{W}_{\Gamma_s}^c,
\end{align*}
and for any $\xi\in \mathcal{H}_{\mathbf{w}}^{\circ}$ with $\mathbf{w}\in  \mathcal{W}_{\Gamma_s}$ and $\ell(\mathbf{w})\ge 2$,
$$U_s(\xi)  = Q_{s\mathbf{w}'}^*(\xi),$$ where $\mathbf{w}\simeq s\mathbf{w}'$ with $\mathbf{w}'\in \mathcal{W}_{\Gamma_s}^c$.


\begin{defn}
	For $s\in V\Gamma$, define a unital faithful and normal $*$-homomorphism $\lambda_s: B(\mathcal{H}_s)\to B(\mathcal{H})$ by
	$$\lambda_s(x) = U_s^*(x\otimes 1)U_s,\quad \forall x\in B(\mathcal{H}_s).$$
\end{defn}

We now recall the definition of graph products of von Neumann algebras, which generalize Voiculescu's free products of von Neumann algebras \cite{voiculescu2006symmetries} and tensor products of von Neumann algebras. 
\begin{defn}
	For each $s\in V\Gamma$, let $(\mathcal{M}_s, \varphi_s)$ be a von Neumann algebra equipped with a normal and faithful state $\varphi_s$. Let $(\mathcal{H}_s,\mathrm{id},\xi_s)$ be the corresponding GNS-construction and $\lambda_s$ be the $*$-homomorphism defined above. The \textit{graph product of von Neumann algebras} is defined  by
	$$\mathcal{M}_{\Gamma} := \big(\underset{s\in V\Gamma}{\bigcup}\lambda_s(\mathcal{M}_s)\big)''.$$
\end{defn}
In particular, if for each $s \in V\Gamma$, $\mathcal{M}_s = \mathcal{L}G_s$ is the group von Neumann algebra of a discrete group $G_s$, then
$
\mathcal{M}_\Gamma = \mathcal{L}G_\Gamma,
$
where $G_\Gamma$ is the graph product of the groups $\{G_s\}_{s \in V\Gamma}$. 

\begin{defn}
	An element $a=a_1\ldots a_n\in \mathcal{M}_{\Gamma}$ is called  \emph{a reduced operator of type} $s_1\ldots s_n$ if $a_i\in \mathcal{M}_{s_i}^\circ$ and $s_1\ldots s_n\in \mathcal{W}_{\Gamma}$, where $\mathcal{M}_{s_i}^\circ := \{b\in \mathcal{M}_{s_i}: \varphi_{s_i}(b) = 0\}$ for $i=1,\ldots,n$. The \emph{length} of such a reduced operator is defined as the length of the reduced expression $s_1\ldots s_n$, and we denote it by $\ell(a)$.
    Define the \emph{ set of initial
syllables} of $a$ by
\[
\operatorname{Init}(a)
=
\{b\in \mathcal{M}_s^\circ : s\in V\Gamma,\ \ell(b^{-1}a)=\ell(a)-1\}.
\]
Equivalently, \(\operatorname{Init}(a)\) is the set of syllables that may occur as
the first syllable of some reduced expression for $a$. 
We define the \emph{maximal clique prefix} of $a$ by 
\[
\mathrm{MCP}(a)=\prod_{b\in \operatorname{Init}(a)} b.
\]
The factors in this product commute pairwise, so the product is well-defined.
\end{defn}



Note that for every $b\in \mathcal{M}_{s_i}$, one has the decomposition 
$$b= \varphi_{s_i}(b)\cdot \mathbf{1} +b^{\circ},  \quad \text{where } b^{\circ} = (b-\varphi_{s_i}(b)\cdot \mathbf{1})\in \mathcal{M}_{s_i}^\circ.$$ 
It follows that the linear span of $\mathbf{1}$ together with all reduced operators forms a dense $*$-subalgebra of $\mathcal{M}_{\Gamma}$. Moreover, let $a = a_1\ldots a_n$ be a reduced operator  with $a_i\in \mathcal{M}_{s_i}^\circ$ and let $\mathbf{w}=s_1\ldots s_n\in \mathcal{W}_{\Gamma}^{\text{f}}$, then $$a\Omega = a_1\xi_{s_1}\otimes\ldots\otimes a_n\xi_{s_n}.$$
In particular,  $\Omega$ is a cyclic vector for $\mathcal{M}_{\Gamma}$. The normal state $\varphi_{\Gamma}$ on $\mathcal{M}_{\Gamma}$ associated with the GNS-construction $(\mathcal{H},\mathrm{id},\Omega)$ is called the \textit{graph product state}. It is the unique normal and faithful state on 
$\mathcal{M}_{\Gamma}$ such that $\varphi_{\Gamma}(a)=0$ for every reduced operator $a\in \mathcal{M}_{\Gamma}$. If $\mathbf{v}\in \mathcal{W}_{\Gamma}$ such that $\mathbf{v}\simeq \mathbf{w}$, then $$a=a_{\sigma(1)}\ldots a_{\sigma(n)},$$ where $\sigma\in  S_n$ is the unique permutation determined by \eqref{eq;tag-CO}
such that $\mathbf{v} = s_{\sigma(1)}\ldots s_{\sigma(n)}$.
\medskip

Fix $s\in V\Gamma$. It is straightforward to verify that for any $x\in \mathcal{M}_s$ and $\eta_1\otimes\ldots\otimes\eta_k\in \mathcal{H}_{\mathbf{w}}^{\circ}$ with $\mathbf{w}= s_1\ldots s_k\in \mathcal{W}_{\Gamma}^{\text{f}}$, one has
$$x(\eta_1\otimes\ldots\otimes\eta_k)= Q_{s\mathbf{w}}\big(x^\circ\xi_s\otimes\eta_1\otimes\ldots\otimes\eta_k\big) + \varphi_s(x)\eta_1\otimes\ldots\otimes\eta_k\quad \text{if } \; \mathbf{w}\in \mathcal{W}_{\Gamma_s}^c.$$
If $\mathbf{w}\in \mathcal{W}_{\Gamma_s}$, then by the braid relation, there exists a unique $\mathbf{w}'\in \mathcal{W}_{\Gamma_s}^c$ such that $\mathbf{w} \simeq s\mathbf{w}'$. Hence, there exists a unique permutation $\sigma\in S_{n}$  satisfying (\ref{eq;tag-CO}) such that $Q^*_{\mathbf{s\mathbf{w}'}}(\eta_1\otimes\ldots\otimes\eta_k)=\eta_{\sigma(1)}\otimes\ldots\otimes\eta_{\sigma(k)}\in \mathcal{H}_s\otimes \mathcal{H}(s)$. In this case, one checks that 
$$x(\eta_1\otimes\ldots\otimes\eta_k)= \eta_1\otimes\ldots\otimes(x\eta_{\sigma(1)}-\langle x\eta_{\sigma(1)},\xi_s\rangle\xi_s)\otimes\ldots\otimes\eta_k + \langle x\eta_{\sigma(1)},\xi_s\rangle\eta_{\sigma(2)}\otimes\ldots\otimes\eta_{\sigma(k)},$$
where the term $x\eta_{\sigma(1)}-\langle x\eta_{\sigma(1)},\xi_s\rangle\xi_s$ appears in the $\sigma(1)$-th  tensor component. 

For each $s\in V\Gamma$, let  $P_s$ denote the orthogonal projection from $\mathcal{H}$ onto $\underset{{\mathbf{w}\in  \mathcal{W}_{\Gamma_s}}}{\bigoplus}\mathcal{H}_{\mathbf{w}}^{\circ}$. It follows from the above discussion that, for $x\in \mathcal{M}_s^\circ$, the operators 
$$P_s xP_s^\perp, \quad P_s xP_s, \quad P_s^\perp xP_s$$ 
correspond, respectively, to a (left) creation operator, a diagonal operator and an annihilation operator. Moreover, these three classes of operators generate the reduced operators in $\mathcal{M}_\Gamma$ (see \cite[Lemma 2.2]{BORST2024110350} or \cite[Proposition 2.6]{caspers2021graph}).

\subsection*{Noncommutative $L_p$-spaces}

Let $(\mathcal{M},\tau)$ be a finite von Neumann algebra equipped with a normal faithful tracial state $\tau$. For $1\le p<\infty$, let $L_p(\mathcal{M})$ denote the completion of $\mathcal{M}$ with respect to the norm
\[
\|x\|_p
:=
\tau(|x|^p)^{1/p},
\qquad x\in\mathcal{M}.
\]
We also set
$L_\infty(\mathcal{M}) := \mathcal{M}$.
We recall below several standard facts about noncommutative $L_p$-spaces that will be used throughout the paper; we refer the reader to \cite{dodds1990remarks,Pi1998, pisier2003non} for further details.

Since $\mathcal{M}$ is finite, one has the embeddings
$L_{p_1}(\mathcal{M})\subseteq L_{p_0}(\mathcal{M})$ contractively for any $1\le p_0\le p_1\le\infty$.
Thus, $(L_{p_0}(\mathcal{M}),L_{p_1}(\mathcal{M}))$ forms an interpolation pair. Their complex interpolation space is given by
\[
(L_{p_0}(\mathcal{M}),L_{p_1}(\mathcal{M}))_\theta
=
L_p(\mathcal{M}),
\]
where $\frac1p
=
\frac{1-\theta}{p_0}
+
\frac{\theta}{p_1}$ and $0<\theta<1$.
In particular, one has the noncommutative version of the Riesz-Thorin interpolation theorem. Let $1\le p_i,q_i\le\infty$ for $i=0,1$, and let
$\frac1p
=
\frac{1-\theta}{p_0}
+
\frac{\theta}{p_1}$ and $\frac1q
=
\frac{1-\theta}{q_0}
+
\frac{\theta}{q_1}$.
If $T:L_{p_i}(\mathcal{M})\to L_{q_i}(\mathcal{M})$
is bounded with operator norm denoted by $\|T\|_{p_i,q_i}$ for $i=0,1$, then
$T:L_p(\mathcal{M})\to L_q(\mathcal{M})$
is bounded and satisfies
\[
\|T\|_{p,q}
\le
\|T\|_{p_0,q_0}^{\,1-\theta}
\|T\|_{p_1,q_1}^{\,\theta}.
\]

Let $1 \le p < \infty$ and let $q$ be such that $\frac{1}{p}+\frac{1}{q}=1
$. Then the dual space $L_p(\mathcal{M})^*$ can be identified isometrically with $ L_{q}(\mathcal{M})$  via the duality $\langle x, y\rangle_{L_p, L_q}=\tau (xy)$. More generally, H\"older's inequality remains valid in the noncommutative setting. Let $1 \le p,q,r \le \infty$ satisfy $\frac{1}{r} = \frac{1}{p}+\frac{1}{q}$. Then for any $x\in L_p(\mathcal{M})$ and $y\in L_q(\mathcal{M})$, one has 
        $$xy\in L_r(\mathcal{M})\quad \text{and}\quad \|xy\|_r\le \|x\|_p\|y\|_q.$$

\section{Cotlar identities on graph products of finite von Neumann algebras}\label{sec; Section 2}

Let $\Gamma$ be a simplicial graph. For the rest of the paper, we restrict ourselves to graph products of finite von Neumann algebras equipped with normal faithful tracial states, for simplicity. However, the arguments should extend, with suitable modifications, to the setting of type~III von Neumann algebras equipped with faithful normal states. For each $s \in V\Gamma$, let $(\mathcal{M}_s,\varphi_s)$ be such a pair, and denote by $\mathcal{M}_\Gamma$ their graph product.
Then $\mathcal{M}_\Gamma$ is itself a finite von Neumann algebra. Indeed, by \cite[Theorem 1]{mlotkowski2004lambda}, the graph product state $\varphi_\Gamma$ associated with the family $\{(\mathcal{M}_s,\varphi_s)\}_{s\in V\Gamma}$ is tracial.

\medskip 

As we mentioned in the introduction, for graph products of finite von Neumann algebras, the Cotlar identity holds only outside a subset and for which conditional expectations do not exist. In \cite[Subsection 4.1]{mei2017free}, Mei and Ricard developed a strategy to handle Hilbert transforms depending on the $d$th syllable, and in \cite[Theorem 2.4]{mei2024lambda}, Mei applied a similar approach to Hilbert transforms on hyperbolic groups. This strategy is essentially based on two ingredients: a generalized Cotlar identity and a Haagerup-type inequality. In the following, we formalize this approach for general $L_2$-bounded linear operators on graph products of finite von Neumann algebras.

\begin{defn}
    For $d\geq 1$, let $P_d$ denote the projection from $\mathcal{M}_{\Gamma}$ onto the  homogeneous $L_2$-closed subspace $\mathcal{M}_d$ generated by all reduced operators of length $d$. Let $P_{\le d}$ denote the projection from $\mathcal{M}_{\Gamma}$ onto the subspace $$\mathcal{M}_{\leq d}:= \mathbb{C}\mathbf{1}\oplus\bigoplus_{k=1}^{d}\mathcal{M}_k.$$ Set $P_{>d} := {\rm id}-P_{\le d}$.
\end{defn}

\begin{defn}
    Let $d\geq 1$ and $T: L_2(\mathcal{M}_{\Gamma})\to L_2(\mathcal{M}_{\Gamma})$ be a bounded linear operator. We say that $T$ satisfies the generalized Cotlar identity with respect to $P_{>d}$ if 
    \begin{equation}\label{eq;Cotlar-Pd}\tag{$\mathrm{Cotlar}_{P_{>d}}$}
		P_{> d}\big(T(x)T(y)^*\big) = P_{> d}\Big(	T\big(x	T(y)^*\big) + 	T\big(	yT(x)^*\big)^*- 	T\big(	T(xy^*)^*\big)^*\Big),
     \end{equation}   
for any $x,y\in \mathcal{M}_{\Gamma}\cap L_2(\mathcal{M}_{\Gamma})$.
\end{defn}

If $T$ satisfies (\ref{eq;Cotlar-Pd}),  the remaining difficulty lies in handling the restriction of $T$ to $P_{\le d}(\mathcal{M}_{\Gamma})$. The following theorem shows that if $P_{\le d}$ is bounded from $L_2(\mathcal{M}_{\Gamma})$ to $\mathcal{M}_{\Gamma}$, then $T$ is bounded on $L_p(\mathcal{M}_{\Gamma})$ for all $1<p<\infty$. 


\begin{thm}\label{thm;Hilbert-trans-finite-vnA}
    Let $d\geq 1$, $1<p<\infty$, $x\in L_p(\mathcal{M}_{\Gamma})$ and $T:L_2(\mathcal{M}_{\Gamma})\to L_2(\mathcal{M}_{\Gamma})$ be a bounded operator. 
    Suppose that $P_{\le d}$ extends to a  bounded map from $L_2(\mathcal{M}_{\Gamma})$ to $\mathcal{M}_{\Gamma}$ and $T$ satisfies \eqref{eq;Cotlar-Pd}.
    Then there is a constant $C_p>0$ such that
    \begin{equation*}
        \|T(x)\|_p\le C_p\|x\|_p.\quad 
    \end{equation*}
\end{thm}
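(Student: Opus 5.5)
We argue by induction on $p=2^k$, following the Mei--Ricard scheme, and then conclude by duality and interpolation. The case $p=2$ is the hypothesis that $T$ is bounded on $L_2$. Suppose $\|T\|_{p\to p}\le C_p$ for some $p=2^k\ge 2$; we aim to bound $\|T(x)\|_{2p}$ for $x\in\mathcal{M}_\Gamma\cap L_2(\mathcal{M}_\Gamma)$. Using $\|T(x)\|_{2p}^2=\|T(x)T(x)^*\|_p$, we split
\[
T(x)T(x)^*=P_{>d}\big(T(x)T(x)^*\big)+P_{\le d}\big(T(x)T(x)^*\big).
\]
For the first term we apply \eqref{eq;Cotlar-Pd} with $y=x$. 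This writes $P_{>d}(T(x)T(x)^*)$ as $P_{>d}$ applied to
\[
T\big(xT(x)^*\big)+T\big(xT(x)^*\big)^*-T\big(T(xx^*)^*\big)^*.
\]
Note that $P_{>d}=\mathrm{id}-P_{\le d}$.

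Next we record the key auxiliary fact: $P_{\le d}$ is bounded on every $L_q$, $1\le q\le\infty$. For $q\ge2$, we have $\|P_{\le d}z\|_q\le\|P_{\le d}z\|_\infty\le \|P_{\le d}\|_{2\to\infty}\|z\|_2\le \|P_{\le d}\|_{2\to\infty}\|z\|_q$, using the finiteness of the trace. For $q\le 2$, we use duality, since $P_{\le d}$ is a self-adjoint projection with respect to the trace pairing; it is trace-symmetric because it is an orthogonal projection commuting with the adjoint. This gives $P_{\le d}:L_1\to L_2$ bounded, hence bounded on $L_q$. In particular $P_{>d}$ is bounded on $L_p$ with constant $1+K$, where $K:=\|P_{\le d}\|_{2\to\infty}$.

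With this in hand, Hölder and the induction hypothesis give
\[
\|P_{>d}(T(x)T(x)^*)\|_p\le (1+K)\big(2C_p\|x\|_{2p}\|T(x)\|_{2p}+C_p^2\|x\|_{2p}^2\big).
\]
For the low-length part, the ultracontractivity estimate from the previous paragraph yields
\[
\|P_{\le d}(T(x)T(x)^*)\|_p\le K\|T(x)T(x)^*\|_1=K\|T(x)\|_2^2\le K\|T\|_{2\to2}^2\|x\|_{2p}^2.
\]
Setting $t=\|T(x)\|_{2p}/\|x\|_{2p}$, the two estimates combine to $t^2\le at+b$ with constants $a,b$ depending only on $p$, $K$ and $\|T\|_{2\to2}$. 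Hence $t\le a+\sqrt b$, which gives $C_{2p}$. A density argument then extends the bound from $\mathcal{M}_\Gamma\cap L_2$ to $L_{2p}$.

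The same argument applies to the adjoint map $T^\dagger$ (defined by $\tau(T(x)y)=\tau(xT^\dagger(y))$), or else we use duality directly. Boundedness of $T$ on $L_{2^k}$ gives boundedness of its Banach adjoint on $L_{q}$ with $q=2^k/(2^k-1)$. Since the hypotheses are invariant under passing to the trace-dual $z\mapsto T^\dagger$, we must check that $T^\dagger$ also satisfies a Cotlar identity. To avoid this, we can instead apply the identity to the map $x\mapsto T(x^*)^*$; taking adjoints of \eqref{eq;Cotlar-Pd} and using that $P_{>d}$ commutes with $*$ shows that the identity is symmetric under this operation. Standard manipulations, as in Mei--Ricard, then give bounds for $p<2$, and Riesz--Thorin interpolation fills in all $1<p<\infty$. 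The main obstacle is this duality step for $p<2$: confirming that the appropriate transposed operator inherits \eqref{eq;Cotlar-Pd}. If this fails, the fallback is to run the induction directly on $T^\dagger$, deriving its Cotlar identity by pairing \eqref{eq;Cotlar-Pd} against test elements in the trace duality.
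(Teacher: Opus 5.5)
Your induction for $p=2^k$ is the paper's proof essentially verbatim: the same splitting of $T(x)T(x)^*$ by $P_{\le d}$ and $P_{>d}$, \eqref{eq;Cotlar-Pd} with $y=x$, ultracontractivity of $P_{\le d}$, and the quadratic inequality. There is one harmless slip: the $L_1\to L_p$ bound for $P_{\le d}$ is $K^2$, not $K$, obtained by factoring $P_{\le d}=P_{\le d}^2$ through $L_2$.

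The genuine gap is the range $1<p<2$, and none of your proposed repairs closes it. By duality, $\|T\|_{p\to p}=\|T^\dagger\|_{p'\to p'}$, and $T^\dagger(y)=T^*(y^*)^*$ with $T^*$ the $L_2$-adjoint. So what you need is $L_{p'}$-boundedness of $T^*$.

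Your map $S(x)=T(x^*)^*$ is the wrong object. Since $\|S\|_{q\to q}=\|T\|_{q\to q}$ for every $q$, no identity for $S$ can yield anything below $2$. Your symmetry claim is also false. The identity for $S$ concerns $S(x)S(y)^*=T(x^*)^*T(y^*)$, whereas \eqref{eq;Cotlar-Pd} and its adjoint only involve products $T(u)T(v)^*$. For instance, take $T=H_\varepsilon$ on a free product of three algebras with two of the signs equal to $-1$; then $S=H_\varepsilon^{\mathrm{op}}$ violates \eqref{eq;Cotlar-Pd} on suitable long reduced words.

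Your fallback of extracting an identity for $T^\dagger$ from that of $T$ cannot work either, because for general $T$ the conclusion for $p<2$ is false. Work on $\mathcal{L}(\mathbb{Z}_2*\mathbb{Z}_3)$; the graph is edgeless, so $P_{\le d}$ is a finite-rank projection into $\mathcal{M}_\Gamma$. Put $T(x)=\tau(\eta^*x)\mathbf{1}$ with $\eta\in L_2\setminus L_{p'}$, which exists since the algebra is diffuse. Every term of \eqref{eq;Cotlar-Pd} is a multiple of $\mathbf{1}$, so the identity holds trivially, yet $\|T\|_{p\to p}=\|\eta\|_{p'}=\infty$.

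The missing ingredient is self-adjointness on $L_2$; the paper's closing phrase ``by interpolation and duality'' uses it tacitly. The operators the theorem is applied to, $H_\varepsilon$ and $H_\varepsilon^{\mathcal{L},d}$, are real linear combinations of orthogonal projections, so $T=T^*$. Hence $T^\dagger(y)=T(y^*)^*$, and $\|T\|_{p\to p}=\|T^\dagger\|_{p'\to p'}=\|T\|_{p'\to p'}<\infty$ follows directly from the case $p'>2$. No Cotlar identity is needed for any transposed operator, and Riesz--Thorin then covers all $1<p<\infty$. You should therefore add the hypothesis $T=T^*$. Alternatively, assume that $T^*$ also satisfies \eqref{eq;Cotlar-Pd} and run your induction on $T^*$.
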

\begin{proof}
      We first show the $L_{2^k}$-boundedness of $T$ by induction on $k\geq 1$.
	  Assume that $T$ is bounded on $L_{2^k}(\mathcal{M}_{\Gamma})$ and set $C_{2^k} := \|T: L_{2^k}(\mathcal{M}_{\Gamma})\to L_{2^k}(\mathcal{M}_{\Gamma})\|$. Now we show that $T$ is bounded on $L_{2p}$, where $p= 2^k$.
	 Note that 
	 $$|T(x)^*|^2 = P_{\le d}|T(x)^*|^2 + P_{> d}\big(T(x)T(x)^*\big).$$
      Since $P_{\le d}$ is bounded from $L_2(\mathcal{M}_{\Gamma})$ to $\mathcal{M}_{\Gamma}$, duality implies that $P_{\le d}$ is bounded from $L_1(\mathcal{M}_{\Gamma})$ to $L_2(\mathcal{M}_{\Gamma})$. 
      Since $P_{\le d}$ is an orthogonal projection, it is contractive on $L_2$. Interpolating between $L_2 \rightarrow L_2$ and $L_2 \rightarrow L_\infty$ via the non-commutative Riesz-Thorin interpolation theorem, we deduce that $P_{\le d}$ is  bounded from $L_2(\mathcal{M}_{\Gamma})$ to $L_q(\mathcal{M}_{\Gamma})$ for all $q>2$. Combining this with the $L_1 \rightarrow L_2$ boundedness, we conclude that $P_{\le d}$ is  bounded from $L_1(\mathcal{M}_{\Gamma})$ to $L_q(\mathcal{M}_{\Gamma})$ for all $q>2$. Together with the assumption that  $T$ satisfies (\ref{eq;Cotlar-Pd}), we get
	 \begin{equation*}
	 	\begin{aligned}
	 		\|T(x)\|_{2p}^2 & = \||T(x)^*|^2\|_p = \|P_{\le d}|T(x)^*|^2 + P_{> d}\big(T(x)T(x)^*\big)\|_p\\
	 		& \le \|P_{\le d}|T(x)^*|^2\|_p + \|P_{> d}\big(T(x)T(x)^*\big)\|_p\\
	 		& \le C\|T(x)T(x)^*\|_1+ C' \big(\|T\big(xT(x)^*\big)\|_p+ \|T\big(xT(x)^*\big)\|_p + \|T\big(	T(xx^*)^*\big)\|_p\big)\\
	 		& \le C\|T(x)\|_2^2 + 2C'C_p\|xT(x)^*\|_p  + C'C_p^2\|xx^*\|_p\\
	 		& \le C\|x\|_{2p}^2 + 2C'C_p\|x\|_{2p}\|T(x)\|_{2p} +C'C_p^2\|x\|_{2p}^2, 
	 	\end{aligned}
	 \end{equation*}
     where $C:= \|P_{\le d}: L_1(\mathcal{M}_{\Gamma})\to L_p(\mathcal{M}_{\Gamma})\|$ and $C':= \|P_{>d}: L_{p}(\mathcal{M}_{\Gamma})\to L_p(\mathcal{M}_{\Gamma})\|$. 
	 The last inequality follows from H$\ddot{\mathrm{o}}$lder's inequality and the fact that the trace on $\mathcal{M}_{\Gamma}$ is finite. Consequently, 
     $$\|T(x)\|_{2p}\le (1+\sqrt{2})(C+C'C_p)\|x\|_{2p},$$
	 which yields the $L_{2^k}$-boundedness of $T$ for all $k\geq 1$.
	 The $L_p$-boundedness of $T$ for all $1<p<\infty$ then follows by interpolation and duality.
\end{proof}


\subsection{Hilbert transforms}
In graph products of von Neumann algebras, due to commutativity,  the initial syllable of reduced operators is not necessary unique.
This gives rise to two natural families of projections: the first focuses on the initial syllable, while the second focuses on the maximal clique prefix. In what follows, we introduce these projections and explain how they are used to define a Hilbert transform as an unconditional sum of these projections.

In the next subsection, we prove that the Hilbert transform defined here satisfies the generalized Cotlar identity \eqref{eq;Cotlar-Pd}, for some $d$ depending only on the graph.


\begin{defn}\label{def;Ls}
	Let $s\in V\Gamma$  and $\Gamma_0\in \mathrm{Cliq}(\Gamma, \ge2)$. 
	\begin{enumerate}
		\item[(i)] Let $L_s$ denote the projection from $\mathcal{M}_{\Gamma}$ onto the subspace
        $\mathcal{M}_{\Gamma_s}$  generated by  all reduced operators  of type $\mathbf{w}$ satisfying $\ell(s\mathbf{w})< \ell(\mathbf{w})$. Equivalently, this condition means that there exists   $\mathbf{v}\in \mathcal{W}_\Gamma$ with $\mathbf{v}\simeq \mathbf{w}$ such that $\mathbf{v}$ begins with $s$. 
        We write $L_s^\perp:={\rm id}-L_s$.
	\item[(ii)] Let  $\mathcal{L}_{s}$ denote the projection from $\mathcal{M}_{\Gamma}$ onto the subspace $\mathcal{M}_{\Gamma,s}$ generated by  all reduced operators of type $\mathbf{w}$ such that for every  $\mathbf{v}\in \mathcal{W}_{\Gamma}$ with $\mathbf{v}\simeq\mathbf{w}$, the reduced word $\mathbf{v}$ begins with $s$.  
    Let $\mathcal{R}_s$ denote the projection from $\mathcal{M}_{\Gamma}$ onto the subspace $\mathcal{M}_{\Gamma,s}^*$. Note that every reduced operator $a$ in $\mathcal{M}_{\Gamma,s}$ satisfies that $\ell (\mathrm{MCP}(a))=1$.
 \item [(iii)]
 Let $\mathcal{L}_{\Gamma_0}$ denote the projection from $\mathcal{M}_{\Gamma}$ onto  the subspace $\mathcal{S}_{\Gamma_0}$ generated by all reduced operators $a$ such that
$\mathrm{MCP}(a) = a_{i_1}\cdots a_{i_k}$, where $a_{i_j} \in \mathcal{M}_{s_{i_j}}^\circ$
and $\{s_{i_1},\ldots,s_{i_k}\} = V\Gamma_0$. 
Let $\mathcal{R}_{\Gamma_0}$ denote the projection from $\mathcal{M}_{\Gamma}$ onto the subspace $\mathcal{S}_{\Gamma_0}^*$.
\end{enumerate}
	  
\end{defn}

Note that the projections $L_s$, $s \in V\Gamma$, are not pairwise orthogonal in general, whereas the projections $\mathcal{L}_s$ and $\mathcal{L}_{\Gamma_0}$ are mutually orthogonal families. In particular, the operator
$$\sum_{s\in V\Gamma} L_s + \varphi_{\Gamma}\cdot \mathbf{1}$$
is not a projection. However, each individual projection $L_s$ extends to a bounded operator on $L_p(\mathcal{M}_\Gamma)$ for $1 < p < \infty$. To see this, we use the decomposition of graph products as amalgamated free products.
By \cite[Theorem 1.1]{caspers2017graph}, for each $s \in V\Gamma$, one has
 $$(\mathcal{M}_s\bar\otimes \mathcal{M}_{\mathrm{Link}(s)})\ast_{\mathcal{M}_{\mathrm{Link}(s)}} \mathcal{M}_{\Gamma\setminus \{s\}},$$
where $\mathcal{M}_{\mathrm{Link}(s)}$ and $\mathcal{M}_{\Gamma\setminus \{s\}}$ denote the graph products associated with induced subgraphs generated by $\mathrm{Link}(s)$ and $V\Gamma\setminus \{s\}$, respectively. 
Applying the Hilbert transform $H_{\varepsilon}$ of \cite{mei2017free} to this amalgamated free product decomposition with amalgamation over $\mathcal{M}_{\mathrm{Link}(s)}$,  we obtain 
$$H_{\varepsilon} = \varepsilon_s L_s + \varepsilon_{s}' (L_s^\perp - \mathbb{E}_s) + \varepsilon_0 \mathbb{E}_s,$$
where $\varepsilon = (\varepsilon_s,\varepsilon'_{s},\varepsilon_0) \in \{\pm1\}^3$, the operators $L_s$ and $L_s^\perp$ are the projections in Definition \ref{def;Ls}(i), and $\mathbb{E}_s: \mathcal{M}_{\Gamma} \to \mathcal{M}_{\mathrm{Link}(s)}$ denotes the canonical conditional expectation. Consequently, the $L_p$-boundedness of $L_s$ on $L_p(\mathcal{M}_\Gamma)$ for $1 < p < \infty$ follows from that of $H_{\varepsilon}$ (see \cite[Corollary 3.9]{mei2017free}).

Now we turn to the projections $\mathcal{L}_s$ and $\mathcal{L}_{\Gamma_0}$ introduced in Definition \ref{def;Ls}(ii) and (iii).

\begin{defn}\label{defn;Hilbert transform}
	Define the Hilbert transforms $H_{\varepsilon}$ and $H_{\varepsilon}^\mathrm{op}$ on $\mathcal{M}_{\Gamma}$ by 
	\begin{equation*}
		H_{\varepsilon} := \sum_{s\in V\Gamma}\varepsilon_s\mathcal{L}_s +\sum_{\Gamma_0\in \mathrm{Cliq}(\Gamma,\ge2)} \varepsilon_{\Gamma_0} \mathcal{L}_{\Gamma_0} + \varepsilon_0\varphi_{\Gamma}\quad \text{and}\quad H_{\varepsilon}^\mathrm{op} := \sum_{s\in V\Gamma}\varepsilon_s\mathcal{R}_s + \sum_{\Gamma_0\in \mathrm{Cliq}(\Gamma,\ge2)} \varepsilon_{\Gamma_0} \mathcal{R}_{\Gamma_0} + \varepsilon_0\varphi_{\Gamma},
	\end{equation*}
	where $\varepsilon_0,\varepsilon_{\Gamma_0},\varepsilon_s\in \{\pm 1\}$ for $s\in V\Gamma$, and $\varepsilon = (\varepsilon_0,\varepsilon_{\Gamma_0},\varepsilon_s) \in \{\pm1\}^{\#\mathrm{Cliq}(\Gamma)}$.
    \end{defn}

\begin{rem}
  It is straightforward to verify that
\[
H_{\varepsilon} H_{\varepsilon}^{\mathrm{op}} = H_{\varepsilon}^{\mathrm{op}} H_{\varepsilon},
\qquad
H_{\varepsilon}^{\mathrm{op}}(a) = H_{\varepsilon}(a^*)^*, \quad a \in \mathcal{M}_\Gamma.
\]
If the underlying graph $\Gamma$ has no edges, then the operators $H_{\varepsilon}$ and $H_{\varepsilon}^{\mathrm{op}}$ defined above reduce to the corresponding Hilbert transforms introduced in \cite{mei2017free}.
\end{rem}

\begin{rem}
	Note that there is another natural way to decompose graph products.  
    Suppose that $V\Gamma=\{s_i:1\leq i\leq M\}$, where $M\in \mathbb{N}$ or $M=\infty$. For each $2\leq k\leq M$, let $\mathfrak{L}_{s_k}$
	denote the projection from $\mathcal{M}_\Gamma$ onto the subspace generated by all reduced operators of type $\mathbf{w}$ satisfying
	$\ell(s_k\mathbf{w})<\ell(\mathbf{w})$, and $
	\ell(s_j\mathbf{w})>\ell(\mathbf{w})$
	for every $1\le j<k$.  Equivalently, this is the subspace generated by all reduced operators 
    that admit a reduced expression starting with $\mathcal{M}_{s_k}^\circ$, but do not admit a reduced expression starting with $\mathcal{M}_{s_j}^\circ$ for any $1\le j<k$. Moreover, we have 
    $$\mathfrak{L}_{s_k} = {L}_{s_k}(1-L_{s_1})\cdots(1-L_{s_{k-1}})$$ for any $ 2\leq k\leq M$, and they are pairwise orthogonal. 
	We define the corresponding Hilbert transform 
    $	\widetilde{H}_{\varepsilon}$ by
    \begin{equation*}
        \widetilde{H}_{\varepsilon} =
	\varepsilon_{s_1}L_{s_1} +
	\sum_{2\leq k\leq M}\varepsilon_{s_k}\mathfrak{L}_{s_k} +
	\varepsilon_0\varphi_\Gamma,
    \end{equation*}
	where $\varepsilon_0,\varepsilon_{s_1}, \varepsilon_{s_k}\in\{\pm1\}$ for all $2\leq k\leq M$, and $\varepsilon = (\varepsilon_0,\varepsilon_{s_i})_{1\leq i\leq M}$.

	We claim that $\widetilde{H}_{\varepsilon}$ is a special case of the Hilbert transform $H_\varepsilon$ introduced in Definition~\ref{defn;Hilbert transform}. Let $N_{s_1}$ be the set of all cliques in $\mathrm{Cliq}(\Gamma,\ge 2)$ whose vertices include $s_1$.
	For each $2\leq k\leq M$, let
	$$N_{s_k} = \left\{
	\Gamma_0\in\mathrm{Cliq}(\Gamma,\ge2): s_k\in V\Gamma_0,\
	s_j\notin V\Gamma_0\ \text{for every }1\le j<k
	\right\}.$$
	Then $\{N_{s_i}\}_{1\leq i\le M}$ forms a partition of $\mathrm{Cliq}(\Gamma,\ge2)$.
	Choose the signs so that
	$$	\varepsilon_{\Gamma_0} = \varepsilon_{s_k}, \qquad
	\forall\, \Gamma_0\in N_{s_k}.$$
	Since
	$$L_{s_1} = \mathcal{L}_{s_1}+ \sum_{\Gamma_0\in N_{s_1}}\mathcal{L}_{\Gamma_0}\quad \text{and}\quad
	\mathfrak{L}_{s_k} = \mathcal{L}_{s_k}+\sum_{\Gamma_0\in N_{s_k}}\mathcal{L}_{\Gamma_0}\quad (k\ge 2),$$
	we obtain
	$$\varepsilon_{s_1}L_{s_1} = \varepsilon_{s_1}\mathcal{L}_{s_1}+ \varepsilon_{s_1}\sum_{\Gamma_0\in N_{s_1}}\mathcal{L}_{\Gamma_0}\quad \text{and}\quad
	\varepsilon_{s_k}\mathfrak{L}_{s_k} = \varepsilon_{s_k}\mathcal{L}_{s_k}+
	\sum_{\Gamma_0\in N_{s_k}}
	\varepsilon_{\Gamma_0}\mathcal{L}_{\Gamma_0}.$$
	Summing over all $1\leq i\leq M$ yields $\widetilde{H}_{\varepsilon} = H_{\varepsilon}.$
\end{rem}

The validity of the Cotlar identity in \cite{mei2017free} is crucially dependent on the uniqueness of reduced expressions in free products of von Neumann algebras, which fails in the context of graph products. As a consequence, the Cotlar identity does not hold in general for the Hilbert transforms introduced in Definition~\ref{defn;Hilbert transform}.
We illustrate this failure with a simple example. Let $a=a_1a_2$ and $b^*=b_1b_2$ be reduced operators in $\mathcal{M}_\Gamma$ of types $s_1s_2$ and $t_1t_2$, respectively. Assume that the induced subgraph generated by $\{s_1,s_2,t_1,t_2\}$ is given by

\begin{center}
	\begin{tikzpicture}
		\coordinate[label = below: $s_1$] (s) at (0,0);
		\coordinate[label = below: $t_1$] (t) at (1,0);
		\coordinate[label = below: $s_2$] (r) at (2,0);
		\coordinate[label = below: $t_2$] (k) at (3,0); 
		
		\node at (s)[circle, fill, inner sep=1pt]{};
		\node at (t)[circle, fill, inner sep=1pt]{};
		\node at (r)[circle, fill, inner sep=1pt]{};
		\node at (k)[circle, fill, inner sep=1pt]{};

		\draw (s)--(t) node[above, midway]{};
		\draw (t)--(r) node[above, midway]{};
		\draw (r)--(k) node[above, midway]{};
	\end{tikzpicture}
    \captionof{figure}{The induced subgraph generated by $\{s_1,s_2,t_1,t_2$\}.}
	\end{center}
In particular, this configuration implies that $s_1s_2 \in \mathrm{Link}(t_1)$ and $t_1t_2 \in \mathrm{Link}(s_2)$, while $s_1s_2 \neq s_2s_1$ and $t_1t_2 \neq t_2t_1$.
In this situation, the Cotlar identity from \cite{mei2017free} fails, i.e.
$$H_{\varepsilon}(a)H_{\varepsilon'}^{\mathrm{op}}(b^*) = H_{\varepsilon}\big(a	H_{\varepsilon'}^{\mathrm{op}}(b^*)\big) + 	H_{\varepsilon'}^{\mathrm{op}}\big(	H_{\varepsilon}(a)b^*\big)- 	H_{\varepsilon'}^{\mathrm{op}}	H_{\varepsilon}(ab^*),$$
since, at the level of coefficients of $ab^*$, one obtains
$$\varepsilon_{s_1}\varepsilon_{t_2}'\ne \varepsilon_{t_2}'\varepsilon_{\Gamma_1}+ \varepsilon_{s_1}\varepsilon_{\Gamma_2}' - \varepsilon_{\Gamma_1}\varepsilon_{\Gamma_2}',$$
where $\Gamma_i$ is the clique generated by $\{s_i,t_i\}$ for $i= 1,2$. 

Although $H_{\varepsilon}$ and $H_{\varepsilon}^{\mathrm{op}}$ fail to satisfy the Cotlar identity on the entire von Neumann algebra, a generalized Cotlar identity established in the next subsection shows that it remains valid for operators of sufficiently large length, with the threshold depending only on the graph $\Gamma$.

\subsection{Generalized Cotlar identities} 
In \cite[Lemma 3.2]{Laura2013}, Ciobanu, Holt, and Rees gave a useful characterization of block lengths under the group operation, expressed via factorizations in graph products of groups, which we state below.

\begin{lem}(\cite[Lemma 3.2]{Laura2013})\label{lem;reduced-decompsition-groups}
    Let $G_{\Gamma}$ be a graph product of groups associated with $\Gamma$. Let $k,n \geq 1$ and $g,h\in G_{\Gamma}\setminus \{e\}$ with $\ell(g) = k$ and $\ell(h) = n$. Suppose that $\ell(gh) = k+n-q$ with $q \geq 0$. Then there exist factorizations of $g$ and $h$
    \begin{equation*}
        g = g_1g_2'g_3 \quad \text{and}\quad h= g_3^{-1}g_2''h_1
    \end{equation*}
    such that 
    \begin{enumerate}
       \item[(i)] $\ell(g) = \ell(g_1) + \ell(g_2') +\ell(g_3)$ and $\ell(h) = \ell(g_3) + \ell(g_2'') +\ell(h_1)$,
    
        \item [(ii)] there exist $r\geq 0$ and $\Gamma_0\in \mathrm{Cliq}(\Gamma,r)$ such that $g_2',g_2''\in G_{\Gamma_0}$ and 
        $\ell(g_2') = \ell(g_2'') = \ell(g_2'g_2'') = r$,

        \item[(iii)] $q = r+ 2\ell(g_3)$.
    \end{enumerate}
\end{lem}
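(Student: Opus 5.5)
We prove the lemma by induction on $n=\ell(h)$, peeling off the first syllable of $h$ and recording at each step whether it is absorbed by $g$, merged with a syllable of $g$, or survives. We use the standard normal form theorem for graph products (Green): an expression $x_1\cdots x_m$ with $x_i\in G_{s_i}\setminus\{e\}$ is reduced if and only if no two syllables $x_i,x_j$ with $i<j$ lie in the same vertex group $G_s$ while every intermediate syllable $x_k$, $i<k<j$, lies in a group $G_{s_k}$ with $s_k\in\mathrm{Link}(s)$. In other words, a word fails to be reduced exactly when two syllables from the same vertex group can be shuffled together by braid moves. Two reduced expressions of the same element differ by shuffles.

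Write $h=x\,h'$ with $x\in G_t\setminus\{e\}$ an initial syllable of $h$, so $\ell(h')=n-1$. There are three cases, according to how $x$ interacts with the end of $g$.

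\textbf{Case (a).} No syllable of $G_t$ can be shuffled to the end of $g$. Then $gx$ is reduced, of length $k+1$.

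\textbf{Case (b).} $g$ ends with a syllable $y\in G_t$ after shuffling, and $yx\neq e$. Then $\ell(gx)=k$, and the merged syllable $yx$ is a terminal syllable of $gx$.

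\textbf{Case (c).} As in (b), but $yx=e$. Then $\ell(gx)=k-1$.

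We apply the induction hypothesis to the pair $(gx,h')$ and obtain factorizations $gx=\tilde g_1\tilde g_2'\tilde g_3$ and $h'=\tilde g_3^{-1}\tilde g_2''\tilde h_1$ with clique $\tilde\Gamma_0$ and parameter $\tilde r$. We then modify them as follows.

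\textbf{In case (c),} $g=(gx)\,x^{-1}$ with $x^{-1}$ terminal in $g$. Set $g_3:=\tilde g_3\,x^{-1}$, with $x^{-1}$ moved to the appropriate end. This is still a length-additive factorization, because $x^{-1}$ cancels against $x$. Keep the clique data, so $r=\tilde r$. We get
\[
q=(k+n)-\ell(gh)=\tilde q+2,
\]
which matches $r+2\ell(g_3)$.

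\textbf{In case (b),} the syllable $yx$ is a new letter of $G_t$ that commutes with whatever follows it. We add $t$ to the clique, taking $g_2'$ to contain $y$ and $g_2''$ to contain $x$, with $\ell(g_2'g_2'')=r$. Then $q$ increases by one, as required.

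\textbf{In case (a),} $x$ contributes no cancellation. Either it lies in $h_1$, or it was shuffled past $\tilde g_3^{-1}\tilde g_2''$, in which case commutation is used to absorb it into $h_1$.

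The main obstacle is the compatibility of these updates. When $x$ merges or cancels, we must check that $t$ commutes with all of $\tilde g_2'$, so that the clique really enlarges, and that $\tilde g_3$ commutes appropriately with the new letter. One must also make sure a later syllable of $h'$ does not reinteract with $yx$.

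To handle this, we reverse the order and induct on syllables of $g$ taken from the right end, which we believe is cleaner. Let $g_3$ be the maximal terminal subword of $g$ that cancels completely against an initial subword of $h$. Its existence follows from the normal form, since the set of fully cancelling terminal syllables is closed under commuting shuffles. Next, among the remaining terminal syllables of $g_1g_2'$ and the initial syllables of $g_2''h_1$, the pairs lying in the same vertex group that merge must pairwise commute. Two merging pairs in noncommuting vertex groups would block one another in the shuffled product, by the normal form theorem. Hence their vertices form a clique $\Gamma_0$ with $r$ vertices, each merged product is nontrivial, which gives $\ell(g_2'g_2'')=r$, and no further reduction occurs, giving $q=r+2\ell(g_3)$.

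We expect the hardest step to be showing that after the merge no new cancellation appears, which is the maximality argument. We would argue it via the normal form theorem applied to the word
\[
g_1\,(g_2'g_2'')\,h_1 .
\]
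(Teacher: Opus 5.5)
Your second argument (split off a cancelling $g_3$ of maximal length, then take $\Gamma_0$ to be the set of all vertices $t$ for which $gg_3^{-1}$ has a terminal syllable $y_t\in G_t$ and $g_3h$ has an initial syllable $x_t\in G_t$; these are pairwise adjacent because terminal syllables of a reduced word commute) is correct, and it is the same two-step scheme (maximal cancellation, then maximal commuting overlap) that the paper uses for its von Neumann algebra analogue, Proposition~\ref{prop; ab-decomposition}, while the group lemma itself is only quoted from Ciobanu--Holt--Rees, so your preliminary induction on $\ell(h)$ can simply be dropped. The step you leave open is a routine check of Green's criterion on $g_1\,(\prod_t y_tx_t)\,h_1$: clashes inside $g_1$, inside $h_1$ or inside the middle block are impossible; a clash of $g_1$ with some $y_tx_t$ is literally a clash of $g_1\prod_t y_t=gg_3^{-1}$ with $y_t$, and symmetrically for $h_1$; a clash of $g_1$ with $h_1$ inside some $G_s$ would force $s\in V\Gamma_0$ by the definition of $\Gamma_0$, which is absurd because $y_sx_s\in G_s$ would then sit between the two clashing syllables; and maximality of $g_3$ (as opposed to maximality of $\Gamma_0$) is needed only to guarantee $y_tx_t\neq e$, i.e.\ $\ell(g_2'g_2'')=r$.
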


We now show that the above lemma admits an analogue in the context of graph products of von Neumann algebras. 
\begin{prop}\label{prop; ab-decomposition}
Let $a$ and $b$ be two reduced operators in $\mathcal{M}_{\Gamma}$ such that $\ell (a)=k$ and $\ell(b)=n$. 
   Then there exist factorizations of $a$ and $b$ given by
    	$$a = a'ca{''}\quad \text{and}\quad b = a{''^{-1}}c{'}b'$$
    such that 
    	\begin{enumerate}
     \item[(i)] $\ell(a) = \ell(a') + \ell(c) +\ell(a'')$ and $\ell(b) = \ell(a'') + \ell(c') +\ell(b')$,

        \item[(ii)] there exist $r\geq 0$ and $\Gamma_0\in \mathrm{Cliq}(\Gamma,r)$ such that 
        $c = c_{1}\ldots c_{r}$ and $c' = c_{1}'\ldots c_{r}'$ are reduced operators of the same type $s'_1\ldots s'_r\in \mathcal{W}_{\Gamma_0}$ and $\ell\big((c_{1}c_{1}'\big)^\circ\ldots (c_{r}c_{r}')^\circ) = r$,
        
			
		\item[(iii)] $\ell\big(a'(c_{1}c_{1}'\big)^\circ\ldots (c_{r}c_{r}')^\circ b') = k+n-q$ with $q = r +2 \ell(a{''})$.
	\end{enumerate}
\end{prop}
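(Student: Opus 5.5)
Throughout, ``$a''^{-1}$'' is read as the formal reversal of $a''$ with syllables taken from $b$; in the group case this is a genuine inverse.

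The plan is to reduce the statement to the combinatorics of types in the right-angled Coxeter group $W_\Gamma$, essentially running the argument of Lemma \ref{lem;reduced-decompsition-groups} at the level of types. Write $a$ with type $\mathbf{w}=s_1\ldots s_k$ and $b$ with type $\mathbf{v}=t_1\ldots t_n$. Since a reduced operator may be rewritten along any equivalent type (the permutation rule following \eqref{eq;tag-CO}), choosing factorizations of $a$ and $b$ amounts to choosing equivalent reduced words for $\mathbf{w}$ and $\mathbf{v}$.

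First I would choose the decomposition of types. Let $\mathbf{u}$ be a longest reduced word such that, up to braid relations, $\mathbf{w}$ ends with $\mathbf{u}$ and $\mathbf{v}$ begins with $\mathbf{u}^{-1}$, the reversal of $\mathbf{u}$. This is the maximal ``cancelling'' part, and it gives $a''$ and the initial segment of $b$ paired with it, in the sense fixed above. Write $\mathbf{w}\simeq \mathbf{w}_1\mathbf{u}$ and $\mathbf{v}\simeq \mathbf{u}^{-1}\mathbf{v}_1$.

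Next, among the letters that can be moved to the end of $\mathbf{w}_1$, let $T$ be the set of those that can also be moved to the front of $\mathbf{v}_1$. I would show that $T$ spans a clique $\Gamma_0$: each letter of $T$ is a terminal letter of $\mathbf{w}_1$, and in a right-angled Coxeter group the terminal letters of a reduced word pairwise commute. Since each letter occurs at most once at the end, this also gives the common type $s_1'\ldots s_r'\in\mathcal{W}_{\Gamma_0}$ for $c$ and $c'$. Setting $\mathbf{w}_1\simeq \mathbf{w}_2\,s_1'\ldots s_r'$ and $\mathbf{v}_1\simeq s_1'\ldots s_r'\,\mathbf{v}_2$ yields the factorizations $a=a'ca''$ and $b=(\text{syllables paired with }a'')\,c'b'$, where the lengths add up, proving (i) and (ii). In the product $ab$, the middle part with syllables from $a''$ and $b$ contracts to scalars times $\mathbf{1}$ plus lower terms. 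This is presumably why the statement only asserts that the specific term $a'(c_1c_1')^\circ\cdots(c_rc_r')^\circ b'$ has the given length, not that $ab$ equals it.

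It remains to verify (iii). The word $\mathbf{w}_2\,s_1'\ldots s_r'\,\mathbf{v}_2$ must be reduced of length $k+n-r-2\ell(a'')$. If it were not, a standard deletion-condition argument in $W_\Gamma$ would produce a letter that can be shuffled to the end of $\mathbf{w}_2$ (or of $\mathbf{w}_2 s_1'\ldots s_r'$) and to the front of $\mathbf{v}_2$ simultaneously. Such a letter would either extend $\mathbf{u}$ or enlarge $T$, contradicting maximality. Once the word is reduced, the operator $a'(c_1c_1')^\circ\cdots(c_rc_r')^\circ b'$ is a product of centred syllables of a reduced type, hence a reduced operator of that length, and $\ell((c_1c_1')^\circ\cdots(c_rc_r')^\circ)=r$ follows likewise.

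The main obstacle is this reducedness and maximality argument. A caveat is that $(c_ic_i')^\circ$ could vanish, so the length statement should be read formally as a statement about the type. I would handle the combinatorics by transferring Lemma \ref{lem;reduced-decompsition-groups} directly, applying it in $W_\Gamma$, or in the graph product of copies of $\mathbb{Z}_3$, to group elements of types $\mathbf{w}$ and $\mathbf{v}$ chosen so that the middle letters do not cancel. The group-level factorization then gives exactly the type decomposition above.
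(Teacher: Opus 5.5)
Your proposal has a genuine gap, and it comes from reading $a''^{-1}$ as a formal reversal. Under that reading your own construction always gives $r=0$. Suppose a letter $s$ could be moved both to the end of $\mathbf{w}_1$ and to the front of $\mathbf{v}_1$. Then $\mathbf{w}\simeq \mathbf{w}_1's\,\mathbf{u}$ and $\mathbf{v}\simeq \mathbf{u}^{-1}s\,\mathbf{v}_1'$, so $s\mathbf{u}$ would be a longer common overlap, contradicting the maximality of $\mathbf{u}$. Hence $T=\emptyset$, the clique step is vacuous, and the proposition collapses to the statement that $\mathbf{w}_1\mathbf{v}_1$ is reduced, with no commuting block at all.

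This is not what is claimed or used. In the paper, $a''^{-1}$ is a genuine inverse with $a''a''^{-1}=\mathbf{1}$; the proof of Theorem A relies on this to write $xy^*=(a'c)(c'b')$. Your type-level $\mathbf{u}$ already fails for $a=a_1$ and $b=b_1$ in $\mathcal{M}_s^\circ$ with $a_1b_1\neq\mathbf{1}$, even in a group von Neumann algebra. There you get $a''=a_1$ and $q=2$. But $ab=\varphi_s(a_1b_1)\mathbf{1}+(a_1b_1)^\circ$ has a term of length $1=k+n-1$, and the correct decomposition is $c=a_1$, $c'=b_1$, $r=1$, $q=1$. Your remark that the middle part ``contracts to scalars times $\mathbf{1}$ plus lower terms'' hides exactly this: the centred parts produce terms longer than $a'b'$, so (iii) no longer identifies the leading term of the product.

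The missing idea is that the split between syllables that cancel and syllables that merge is invisible on types in $W_\Gamma$, where every coincident letter cancels. It depends on the operators themselves. The paper first removes a maximal part $a=dd'$, $b=d''f$ with $d'd''=\mathbf{1}$, so that $d''=d'^{-1}$ by finiteness, and sets $a''=d'$. Only then, with no cancellation left, does it take $c$ and $c'$ to be the longest commuting suffix of $d$ matching the reversed prefix of $f$.

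Your fallback of transferring the Ciobanu--Holt--Rees lemma to elements of a graph product of copies of $\mathbb{Z}_3$ ``chosen so that the middle letters do not cancel'' presupposes this split without saying how it is made. That choice is precisely the operator-level information your type-only argument discards. Your caveat that $(c_ic_i')^\circ$ may vanish is fair, and the paper glosses over it as well.
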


\begin{proof}
Suppose that $a = a_1\ldots a_k, b = b_1\ldots b_n\in \mathcal{M}_{\Gamma}$ are  reduced operators of type $s_1\ldots s_k$ and type $t_1\ldots t_n$, respectively. 
Let $w = s_1\ldots s_k$ and $v = t_1\ldots t_n$ be the corresponding elements in the right-angled Coxeter group $W_{\Gamma}$. 
We first treat the case where no cancellation occurs in the product of  $a$ and $b$, that is 
when we reduce the expression $ab$, there is no  product of syllables which becomes identity. In this case we have $a{''} = \mathbf{1}$. 
Note that there are factorizations  $$w = \mathbf{w}_1\mathbf{w}_2, \quad  v = \mathbf{v}_1\mathbf{v}_2$$ 
such that $\mathbf{w}_2 = \mathbf{v}_1^{-1}$ and the generators of $\mathbf{w}_{2}$ commute with each other. Such  factorizations may not be unique due to the braid relation. We choose the one for which  $\ell(\mathbf{w}_2)$ is maximal and we denote $r:=\ell(\mathbf{w}_2)$.  If  $r= 0$, then $c=c' = \mathbf{1}$, and the result is immediate. If $r\ge 1$,  we define $c$ and $c'$ as the reduced operators corresponding to $\mathbf{w}_{2}$ and  $\mathbf{v}_{1}^{-1}$, respectively, and $a'$, $b'$ as those corresponding to $\mathbf{w}_1$ and $\mathbf{v}_2$.  By construction, all syllables of $c$ and $c'$ commute. Hence there exist $r\geq 0$ and $\Gamma_0\in \mathrm{Cliq}(\Gamma,r)$ such that $c = c_1\ldots c_r$ and $c' = c_1'\ldots c_r'$ are reduced operators of the same type $s'_1\ldots s'_r\in \mathcal{W}_{\Gamma_0}$.
However, note that the product $cc'$ need not be a reduced operator but
 a linear combination of reduced operators: 
 \begin{equation}\label{equation; ab-decomposition}
 \begin{aligned}
     cc' = & (c_{1}c_{1}')^\circ\ldots (c_{r}c_{r}')^\circ + \sum_{j=1}^{r-1}\sum_{1\le i_1<\ldots<i_j\le r}\varphi_{s_{i_1}'}(c_{{i_1}}c_{{i_1}}')\ldots\varphi_{s_{i_j}'}(c_{{i_j}}c_{{i_j}}')\\
			& \times (c_{1}c_{1}')^\circ\ldots(c_{{i_1}}c_{{i_1}}')^{\circ\wedge}\ldots(c_{{i_j}}c_{{i_j}}')^{\circ\wedge}\ldots(c_{i_r}c_{i_r}')^\circ \\
			& + \varphi_{s_{1}'}(c_{{1}}c_{{1}}')\ldots\varphi_{s_{r}'}(c_{{r}}c_{{r}}')\cdot\mathbf{1},
 \end{aligned}
 \end{equation}
where $(c_{{i_j}}c_{{i_j}}')^{\circ\wedge}$ indicates that these terms are omitted.
The reduced operator in the sum above which has the largest length is $(c_{1}c_{1}')^\circ\ldots (c_{r}c_{r}')^\circ$, and it is direct to see that 
$$\ell\big((c_{1}c_{1}')^\circ\ldots (c_{r}c_{r}')^\circ\big) = r\quad \text{and}\quad \ell\big(a'(c_{1}c_{1}')^\circ\ldots (c_{r}c_{r}')^\circ b'\big) = k+n-r.$$

We now consider the general case. If cancellations occur, write $a = dd'$ and $b = d{''}f$ with $d'd{''} = \mathbf{1}$. Since $\mathcal{M}_{\Gamma}$ is finite, we have $d{''} = d'^{-1}$. Using braid relations, we may assume that $\ell(d')$ is maximal among all such factorizations for which no cancellation occurs between $d$ and $f$. Then $a{''}=d'$. Applying the previous argument to $d$ and $f$, we obtain factorizations $d = a'c$ and $f = c'b'$ such that $\ell\big(a'(c_{1}c_{1}')^\circ\ldots (c_{r}c_{r}')^\circ b'\big) = k+n - 2\ell(a{''})-r$ and $c$, $c'$ satisfy (ii).  The result follows by setting $q = r+ 2\ell(a{''})$.
\end{proof}

\begin{rem}
In the special case where $\mathcal{M}_{\Gamma}$ is a graph product of group von Neumann algebras, one has $cc'=(c_{1}c_{1}')^\circ\ldots (c_{r}c_{r}')^\circ$ which corresponds to the first term in \eqref{equation; ab-decomposition}, since $\varphi_s(\lambda_h) = 0$ for any $e\ne h\in G_s$ ($s\in V\Gamma$). Therefore, in this case,  the above proposition simplifies to Lemma \ref{lem;reduced-decompsition-groups}.
\end{rem}

As indicated by the preceding proposition, the product of two reduced operators admits a factorization containing a commutative component (the $cc'$ term). To control this component, we assume throughout the remainder of this section that the graph $\Gamma$ satisfies the condition \eqref{eq; Fi1}, namely, there exists $N\geq 0$ such that $|\mathrm{Link}(s)|\le N$ for all $ s\in V\Gamma$. 

Now we are ready to prove \ref{Thm A} which says that the Hilbert transform $H_{\varepsilon}$ satisfies \eqref{eq;Cotlar-Pd} for $d=3N$.

\begin{proof}[Proof of \ref{Thm A}]
	By linearity, it suffices to assume that $x$ and $y$ are reduced operators. 
    Suppose that $\ell(x)=k$ and $\ell(y)=n$. By Proposition \ref{prop; ab-decomposition}, $x$ and $y^*$ admit the following factorizations:
	$$x = a'ca{''}\quad \text{and}\quad y^* = a{''^{-1}}c'b',$$
	where $a{''}, c$ and $ c'$ are as in Proposition \ref{prop; ab-decomposition}. It follows from (\ref{equation; ab-decomposition}), that 
    \begin{equation*}
		\begin{aligned}
			xy^* =\, & (a'c)\cdot (c'b')\\
			 =\, &  a'(c_{1}c_{1}')^\circ\ldots (c_{r}c_{r}')^\circ b' + \sum_{j=1}^{r-1}\sum_{1\le i_1<\ldots<i_j\le r}\varphi_{s_{i_1}'}(c_{{i_1}}c_{{i_1}}')\ldots\varphi_{s_{i_j}'}(c_{{i_j}}c_{{i_j}}')\\
			& \times a'(c_{1}c_{1}')^{\circ}\ldots(c_{{i_1}}c_{{i_1}}')^{\circ\wedge}\ldots(c_{{i_j}}c_{{i_j}}')^{\circ\wedge}\ldots(c_{i_r}c_{i_r}')^\circ b'\\
			& + \varphi_{s_{1}'}(c_{{1}}c_{{1}}')\ldots\varphi_{s_{r}'}(c_{{r}}c_{{r}}')a'\cdot b':= \mathrm{I} + \mathrm{II}+ \mathrm{III}.
		\end{aligned}
	\end{equation*}
    Note that $\mathrm{I}$ and the terms in $\mathrm{II}$ are reduced operators, however $\mathrm{III}$ need not be.     
     Write  $a'\cdot b'$ as a linear span of reduced operators.  Observe that the maximal length of terms in this expansion is bounded by 
$\ell(\mathrm{I})=\ell(a'(c_{1}c_{1}')^\circ\ldots (c_{r}c_{r}')^\circ b')$. Hence, if $\ell(\mathrm{I}) \leq 3N$,  the identity (\ref{eq;Cotlar-ab}) holds trivially. By assumption \eqref{eq; Fi1} on the graph $\Gamma$, we have 
$$\ell\big((c_{1}c_{1}')^\circ\ldots (c_{r}c_{r}')^\circ\big)\le N.$$
Thus if $\ell(\mathrm{I})>3N$, then either $\ell(a')> N$ or $\ell(b')>N$.
    \medskip
    
    Assume first that $\ell(a')>N$. Since every clique in $\Gamma$ has at most $N$ vertices, we have $\ell(\mathrm{MCP}(x))\le N$. 
    Since $\ell(a')>N$, $x = a'ca{''}$ and at most $N$ many $\mathcal{M}_s$'s commute, one has $\mathrm{MCP}(x) = \mathrm{MCP}(a')$. 
    Therefore, this implies that $H_{\varepsilon}(x) = H_{\varepsilon}(a')ca{''}$
   and 
     \begin{equation}\label{eq;expansion 1}
     \begin{aligned}
         H_{\varepsilon}(xy^*) &= H_{\varepsilon}(a')(c_{1}c_{1}')^\circ\ldots (c_{r}c_{r}')^\circ b' + \sum_{j=1}^{r-1}\sum_{1\le i_1<\ldots<i_j\le r}\varphi_{s_{i_1}'}(c_{{i_1}}c_{{i_1}}')\ldots\varphi_{s_{i_j}'}(c_{{i_j}}c_{{i_j}}')\\
			& \times H_{\varepsilon}(a')(c_{1}c_{1}')^\circ\ldots(c_{{i_1}}c_{{i_1}}')^{\circ\wedge}\ldots(c_{{i_j}}c_{{i_j}}')^{\circ\wedge}\ldots(c_{i_r}c_{i_r}')^\circ b'+H_{\varepsilon}(\mathrm{III}).
            \end{aligned}
     \end{equation}
     Similarly, if $\ell(b')>N$, then $H_{\varepsilon'}^{\mathrm{op}}(y^*) = a{''^{-1}}c'H_{\varepsilon'}^{\mathrm{op}}(b')$ and 
     \begin{equation}\label{eq;expansion 2}
         \begin{aligned}
         H_{\varepsilon'}^{\mathrm{op}}(xy^*) &= a'(c_{1}c_{1}')^\circ\ldots (c_{r}c_{r}')^\circ H_{\varepsilon'}^{\mathrm{op}}(b') + \sum_{j=1}^{r-1}\sum_{1\le i_1<\ldots<i_j\le r}\varphi_{s_{i_1}'}(c_{{i_1}}c_{{i_1}}')\ldots\varphi_{s_{i_j}'}(c_{{i_j}}c_{{i_j}}')\\
			& \times a'(c_{1}c_{1}')^\circ\ldots(c_{{i_1}}c_{{i_1}}')^{\circ\wedge}\ldots(c_{{i_j}}c_{{i_j}}')^{\circ\wedge}\ldots(c_{i_r}c_{i_r}')^\circ H_{\varepsilon'}^{\mathrm{op}}(b')+H_{\varepsilon'}^{\mathrm{op}}(\mathrm{III}).
            \end{aligned}
     \end{equation}
     
     We now treat $\mathrm{III}$. If all terms in the expansion of $a'\cdot b'$ have length at most $3N$, then using the facts that $H_{\varepsilon}P_{>3N} = P_{>3N}H_{\varepsilon}$ and $H_{\varepsilon'}^{\mathrm{op}}P_{>3N} = P_{>3N}H_{\varepsilon'}^{\mathrm{op}}$, equation \eqref{eq;expansion 1} implies 
     \begin{equation}\label{eq;8}
      P_{>3N} H_{\varepsilon}(x)y^*=P_{>3N}H_{\varepsilon}(xy^*)   
     \end{equation}
     and 
     $$P_{>3N}H_{\varepsilon'}^{\mathrm{op}}\big(H_{\varepsilon}(x)y^*\big) = P_{>3N}H_{\varepsilon'}^{\mathrm{op}}H_{\varepsilon}(xy^*).$$
    Replacing $y^*$ in \eqref{eq;8} by $H_{\varepsilon'}^{\mathrm{op}}(y^*)$, we obtain
     $$P_{>3N}H_{\varepsilon}(x)H_{\varepsilon'}^{\mathrm{op}}(y^*) = P_{>3N}H_{\varepsilon}\big(xH_{\varepsilon'}^{\mathrm{op}}(y^*)\big).$$
 Therefore, (\ref{eq;Cotlar-ab}) holds. Similarly, by the facts that $H_{\varepsilon'}^{\mathrm{op}}P_{>3N} = P_{>3N}H_{\varepsilon'}^{\mathrm{op}}$ and $H_{\varepsilon}H_{\varepsilon'}^{\mathrm{op}}=H_{\varepsilon'}^{\mathrm{op}}H_{\varepsilon}$, equation \eqref{eq;expansion 2} implies 
 \begin{equation}\label{eq;9}
      P_{>3N} \big(xH_{\varepsilon'}^{\mathrm{op}}(y^*)\big)=P_{>3N}H_{\varepsilon'}^{\mathrm{op}}(xy^*)   
     \end{equation}
 and 
 \begin{equation*}
      P_{>3N}H_{\varepsilon} \big(xH_{\varepsilon'}^{\mathrm{op}}(y^*)\big)=P_{>3N}H_{\varepsilon'}^{\mathrm{op}}H_{\varepsilon}(xy^*).   
     \end{equation*}
Replacing $x$ in \eqref{eq;9} by $H_{\varepsilon}(x)$, we obtain
$$
P_{>3N} \big(H_{\varepsilon}(x)H_{\varepsilon'}^{\mathrm{op}}(y^*)\big)=P_{>3N}H_{\varepsilon'}^{\mathrm{op}}\big(H_{\varepsilon}(x)y^*\big).
$$
Hence, \eqref{eq;Cotlar-ab} follows. 
 
 If instead the linear span of $a'\cdot b'$ contains terms of length  greater than $3N$, we apply Proposition \ref{prop; ab-decomposition} to $a'$ and $b'$ to obtain the following factorizations 
     $$a' = a'_2da_2''\quad \text{and}\quad b' = a_2''^{-1}d'b'_2.$$  
Using an argument similar to that for $x$ and $y$, it is straightforward to see that (\ref{eq;Cotlar-ab}) holds if  all reduced operators in the linear span of $a'_2\cdot b'_2$ have length at most $3N$. 
     If not, we continue decomposing $a'_2$ and $b'_2$  using Proposition \ref{prop; ab-decomposition} and 
     repeat the above argument.  After finitely many steps, we obtain some $k\geq 2$ such that all reduced operators in the linear span of $a'_k\cdot b'_k$ have length at most $3N$. The claim then follows. 
\end{proof}

\begin{rem}
    Observe that when the graph $\Gamma$ has no edges, that is $N = 0$, \ref{Thm A} reduces to \cite[Proposition 3.2(vi)]{mei2017free} for free products of finite von Neumann algebras.
\end{rem}



\medskip

In the special case where $\mathcal{M}_{\Gamma}$ is the von Neumann algebra associated with a graph product of groups $G_{\Gamma}$, with $\Gamma$ a simplicial graph satisfying~\eqref{eq; Fi1} and each $G_s$ ($s \in V\Gamma$) a discrete group, we adopt a more direct approach to establish the Cotlar identity. 
We exploit Lemma~\ref{lem;reduced-decompsition-groups} in place of Proposition~\ref{prop; ab-decomposition}, which leads to a simpler proof of the generalized Cotlar identity.

\begin{lem}\label{lem; Cotlar-gp}
	For any  $g, h\in G_{\Gamma}$ with $\ell(gh^{-1})>3N$, we have either
	\begin{equation*}
		H_{\varepsilon}(\lambda_g\lambda_h^*) = H_{\varepsilon}(\lambda_g)\lambda_h^*\quad \text{or}\quad H_{\varepsilon}^{\mathrm{op}}(\lambda_g\lambda_h^*) = \lambda_gH_{\varepsilon}^{\mathrm{op}}(\lambda_h^*).
	\end{equation*}
\end{lem}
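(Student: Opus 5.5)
We apply Lemma~\ref{lem;reduced-decompsition-groups} to $g$ and $h^{-1}$, with $\lambda_g\lambda_h^*=\lambda_{gh^{-1}}$. We may assume $g,h\neq e$; otherwise both sides coincide trivially, since $H_\varepsilon(\lambda_e)=\varepsilon_0\lambda_e$ and $H_\varepsilon$ is unchanged by left or right multiplication by $\lambda_e$. The lemma gives factorizations
\[
g=g_1g_2'g_3,\qquad h^{-1}=g_3^{-1}g_2''h_1,
\]
with the length additivity in (i). Here $g_2',g_2''\in G_{\Gamma_0}$ for a clique $\Gamma_0$ with $r$ vertices, and $\ell(g_2'g_2'')=r$. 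Consequently
\[
gh^{-1}=g_1\,(g_2'g_2'')\,h_1\quad\text{with}\quad \ell(gh^{-1})=\ell(g_1)+r+\ell(h_1).
\]
Length additivity holds here because $q=r+2\ell(g_3)$ and $\ell(g)+\ell(h)-q=\ell(g_1)+\ell(h_1)+r$. Condition \eqref{eq; Fi1} gives every clique at most $N+1$ vertices. (The paper uses the bound $N$; the constant $3N$ leaves room for either convention.) So $r\le N+1$, and $\ell(gh^{-1})>3N$ forces $\ell(g_1)>N$ or $\ell(h_1)>N$. I would split into these two cases. They are symmetric under $g\leftrightarrow h$ and inversion, because $H^{\mathrm{op}}_\varepsilon(a)=H_\varepsilon(a^*)^*$.

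\emph{Case $\ell(g_1)>N$.} I would prove $H_\varepsilon(\lambda_g)\lambda_h^*=H_\varepsilon(\lambda_{gh^{-1}})$. Since $H_\varepsilon$ acts on $\lambda_k$ by the scalar $\varepsilon_{\mathrm{MCP}\text{-type}(k)}$, it suffices to show that $\mathrm{MCP}(g)$ and $\mathrm{MCP}(gh^{-1})$ have the same type. More precisely, I would show that both equal $\mathrm{MCP}(g_1)$ whenever $g=g_1u$ and $\ell(g)=\ell(g_1)+\ell(u)$. This covers $u=g_2'g_3$ and $u=g_2'g_2''h_1$.

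\textbf{Key step.} Let $x\in\operatorname{Init}(g_1u)$ with $x\in G_s$. Then some reduced expression of $g_1u$ starts with $x$. Fix the reduced expression obtained by concatenating ones for $g_1$ and $u$. By the normal form theorem for graph products (reduced words differ only by shuffling commuting syllables), $x$ is some syllable of this word that commutes with all syllables to its left. Suppose $x$ lies in the $u$-part. Then $s$ commutes with, and differs from, the vertices of all $\ell(g_1)>N$ syllables of $g_1$. Hence those vertices lie in $\mathrm{Link}(s)$, and distinct vertices among them must occur because equal adjacent commuting letters are impossible in a reduced word.

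This is the point I expect to be the main obstacle. Pairwise-distinct vertices cannot be assumed: the syllables of $g_1$ may repeat vertices. Still, every syllable of $g_1$ lies in $\bigcup_{t\in\mathrm{Link}(s)}G_t$, and $g_1$ commutes with $G_s$. So $g_1\in G_{\mathrm{Link}(s)}$, and moreover $g_1$ must itself lie in a product where repeated vertices force a non-commuting letter between them. I would handle this as follows. Take the leftmost $u$-syllable $x$ that commutes past all of $g_1$. Any two syllables of $g_1$ with the same vertex $t$ must be separated by a letter not commuting with $t$, and that letter also lies in $\mathrm{Link}(s)$. Then use $|\mathrm{Link}(s)|\le N$ together with a reduced-word counting argument to bound $\ell(g_1)$. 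If a clean bound of the form $\ell(g_1)\le N$ fails in general (for instance, for a path inside $\mathrm{Link}(s)$), I would fall back on the stronger structure available here: such an $x$ would also have to commute past $g_2'$ in $g$, and the same argument would apply in $gh^{-1}$. The claim would then be that $\operatorname{Init}(g_1u)\cap u\text{-part}$ is identical for $u=g_2'g_3$ and $u=g_2'g_2''h_1$, up to the clique factor, which yields equal MCP types directly.

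Once this is settled, $\operatorname{Init}(g)=\operatorname{Init}(g_1)=\operatorname{Init}(gh^{-1})$, so the MCP types agree and the first identity holds.

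\emph{Case $\ell(h_1)>N$.} The symmetric argument, applied to terminal syllables and $H^{\mathrm{op}}_\varepsilon$, gives $H^{\mathrm{op}}_\varepsilon(\lambda_{gh^{-1}})=\lambda_gH^{\mathrm{op}}_\varepsilon(\lambda_h^*)$.
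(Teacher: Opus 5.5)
Your proposal follows the paper's route. You decompose $gh^{-1}=g_1(g_2'g_2'')h_1$ by Lemma~\ref{lem;reduced-decompsition-groups}, and when $\ell(g_1)>N$ you argue that nothing to the right of $g_1$ can be shuffled to the front, so that $\operatorname{Init}(g)=\operatorname{Init}(g_1)=\operatorname{Init}(gh^{-1})$. The step you flagged as the main obstacle is a genuine gap, and it cannot be closed: the statement is false as written. The paper's proof asserts the same unjustified step, namely that the bound $|\mathrm{Link}(s)|\le N$ prevents any later syllable from reaching the leftmost position. Condition~\eqref{eq; Fi1} bounds the number of vertices adjacent to $s$, not the length of reduced words in $G_{\mathrm{Link}(s)}$. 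The latter is unbounded as soon as $\mathrm{Link}(s)$ contains two non-adjacent vertices.

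Concretely, let $V\Gamma=\{s,t,u\}$ with edges $\{s,t\}$ and $\{s,u\}$ only, so $N=2$. Pick $a\in G_t\setminus\{e\}$, $b\in G_u\setminus\{e\}$, $c\in G_s\setminus\{e\}$, and put $g=(ab)^m$, $h=c^{-1}$ with $m\ge 3$. Then $gh^{-1}=(ab)^mc$ is reduced of length $2m+1>3N$. The decomposition has $r=0$, $g_3=e$, $g_1=g$ and $h_1=c$, so you are in your first case with $\ell(g_1)=2m>N$. However $\operatorname{Init}(g)=\{a\}$ whereas $\operatorname{Init}(gh^{-1})=\{a,c\}$, because $c$ commutes with every syllable of $g$. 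Symmetrically, $\operatorname{Init}(h)=\{c^{-1}\}$ whereas $\operatorname{Init}(hg^{-1})=\{c^{-1},b^{-1}\}$. Write $\varepsilon_{st},\varepsilon_{su}$ for the signs attached to the two edges. Then
\begin{align*}
H_{\varepsilon}(\lambda_g\lambda_h^*)-H_{\varepsilon}(\lambda_g)\lambda_h^*&=(\varepsilon_{st}-\varepsilon_t)\lambda_{gh^{-1}},\\
H_{\varepsilon}^{\mathrm{op}}(\lambda_g\lambda_h^*)-\lambda_gH_{\varepsilon}^{\mathrm{op}}(\lambda_h^*)&=(\varepsilon_{su}-\varepsilon_s)\lambda_{gh^{-1}},
\end{align*}
and both are nonzero for $\varepsilon_t=\varepsilon_s=1$ and $\varepsilon_{st}=\varepsilon_{su}=-1$.

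The same example breaks Corollary~\ref{cor; Cotlar-identity-group-vNA}. The coefficient of $\lambda_{gh^{-1}}$ in left side minus right side is $(\varepsilon_t-\varepsilon_{st})(\varepsilon'_s-\varepsilon'_{su})$, which is nonzero at every length $2m+1$. So no length threshold depending on $N$ can rescue it. Your fallback claim also fails here: the $u$-part is $u=e$ for $g$ but $u=c$ for $gh^{-1}$, and $c$ does reach the front.

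Your Key step does prove the lemma under an extra hypothesis: every $\mathrm{Link}(s)$ is a clique. Equivalently, $\Gamma$ has no induced path on three vertices, i.e.\ $\Gamma$ is a disjoint union of complete graphs. In that case, a $u$-syllable in $G_s$ that commutes past all of $g_1$ forces $g_1\in G_{\mathrm{Link}(s)}=\prod_{v\in\mathrm{Link}(s)}G_v$, hence $\ell(g_1)\le N$, a contradiction. Without that hypothesis, any induced path $t,s,u$ in $\Gamma$ produces a counterexample as above.

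Separately, your remark that the threshold $3N$ accommodates cliques with $N+1$ vertices is not right. With $r\le N+1$, the hypothesis $\ell(gh^{-1})>3N$ only gives $\ell(g_1)+\ell(h_1)\ge 2N$. That does not force $\ell(g_1)>N$ or $\ell(h_1)>N$, so your case split would need the threshold $3N+1$ or a separate argument.
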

\begin{proof}
	By Lemma \ref{lem;reduced-decompsition-groups}, we can write 
$$g = g_1g_2'g_3\quad \text{and}\quad h^{-1} = g_3^{-1}g_2''h_1,$$
	with $g_2', g_2''\in G_{\Gamma_0}$ for some $\Gamma_0\in \mathrm{Cliq}(\Gamma,r)$, and $\ell(g_2') = \ell(g_2'') = \ell(g_2'g_2'') = r\le N$. It follows that $$\lambda_g\lambda_h^* = \lambda_{g_1}\lambda_{g_2'g_2''}\lambda_{h_1}.$$
    Since $\ell(gh^{-1})> 3N$, either $\ell(g_1)>N$ or $\ell(h_1)>N$.
	In the first case,  the bound $|\mathrm{Link}(s)|\le N$ for all $s\in V\Gamma$ 
    ensures that no syllable in the reduced expression of 
$g_2'g_3$ can be moved to the leftmost position of
$g$ via braid relations in the underlying right-angled Coxeter group. Consequently, 
	$$H_{\varepsilon}(\lambda_g) = H_{\varepsilon}(\lambda_{g_1})\lambda_{g_2'g_3} \quad \text{ and } \quad H_{\varepsilon}(\lambda_g\lambda_h^*) = H_{\varepsilon}(\lambda_{g_1})\lambda_{g_2'g_2''}\lambda_{h_1}=H_{\varepsilon}(\lambda_g)\lambda_h^*.$$
	In the second case, no syllable in the reduced expression of  $g_3^{-1}g_2''$ can be moved to the rightmost position of $h^{-1}$. Therefore, we have
	$$H_{\varepsilon}^{\mathrm{op}}(\lambda_h^*) = \lambda_{g_3^{-1}g_2''} H_{\varepsilon}^{\mathrm{op}}(\lambda_{h_1})\quad \text{ and } \quad H_{\varepsilon}^{\mathrm{op}}(\lambda_g\lambda_h^*) = \lambda_gH_{\varepsilon}^{\mathrm{op}}(\lambda_h^*).$$	
\end{proof}


\begin{cor}\label{cor; Cotlar-identity-group-vNA}
	Let $g,h\in G_{\Gamma}$  with $\ell(gh^{-1})>3N$ and $\varepsilon, \varepsilon'$ be two sequences in $\{\pm1\}^{2+|V\Gamma|}$. Then we have the following Cotlar identity
	\begin{equation*}
		H_{\varepsilon}(\lambda_g)H_{\varepsilon'}^{\mathrm{op}}(\lambda_h^*) = 	H_{\varepsilon}\big(\lambda_g	H_{\varepsilon'}^{\mathrm{op}}(\lambda_h^*)\big) + 	H_{\varepsilon'}^{\mathrm{op}}\big(	H_{\varepsilon}(\lambda_g)\lambda_h^*\big)- 	H_{\varepsilon'}^{\mathrm{op}}	H_{\varepsilon}(\lambda_g\lambda_h^*).
	\end{equation*}
\end{cor}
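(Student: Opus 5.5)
We use Lemma~\ref{lem; Cotlar-gp} directly. Write $u=\lambda_g$ and $v=\lambda_h^*$. Every $H_\varepsilon$, $H^{\mathrm{op}}_{\varepsilon'}$ acts diagonally on the basis $\{\lambda_k\}$ with signs. So $H_\varepsilon(\lambda_g)=\alpha\lambda_g$ and $H^{\mathrm{op}}_{\varepsilon'}(\lambda_h^*)=\beta\lambda_h^*$ with $\alpha,\beta\in\{\pm1\}$. Moreover $\lambda_g\lambda_h^*=\lambda_{gh^{-1}}$ with
\[
H_\varepsilon(\lambda_{gh^{-1}})=\gamma\lambda_{gh^{-1}},\qquad H^{\mathrm{op}}_{\varepsilon'}(\lambda_{gh^{-1}})=\delta\lambda_{gh^{-1}},\qquad \gamma,\delta\in\{\pm1\}.
\]
Both sides of the identity in Corollary~\ref{cor; Cotlar-identity-group-vNA} are then scalar multiples of $\lambda_{gh^{-1}}$. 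The left side is $\alpha\beta\lambda_{gh^{-1}}$. The right side is
\[
\beta\gamma\lambda_{gh^{-1}}+\alpha\delta\lambda_{gh^{-1}}-\gamma\delta\lambda_{gh^{-1}},
\]
using $H^{\mathrm{op}}_{\varepsilon'}H_\varepsilon(\lambda_{gh^{-1}})=\gamma\delta\lambda_{gh^{-1}}$. The claim therefore reduces to the scalar identity
\[
\alpha\beta=\beta\gamma+\alpha\delta-\gamma\delta .
\]
Equivalently, $(\alpha-\gamma)(\beta-\delta)=0$.

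Since $\ell(gh^{-1})>3N$, Lemma~\ref{lem; Cotlar-gp} gives one of two cases. In the first, $H_\varepsilon(\lambda_g\lambda_h^*)=H_\varepsilon(\lambda_g)\lambda_h^*$, i.e.\ $\gamma\lambda_{gh^{-1}}=\alpha\lambda_{gh^{-1}}$, so $\gamma=\alpha$. In the second, $H^{\mathrm{op}}_{\varepsilon'}(\lambda_g\lambda_h^*)=\lambda_gH^{\mathrm{op}}_{\varepsilon'}(\lambda_h^*)$, which gives $\delta=\beta$. In either case $(\alpha-\gamma)(\beta-\delta)=0$, and the identity holds.

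There is essentially no obstacle. One should only check that the operators really act diagonally on group elements; this holds because each is a sign combination of projections onto sets of group elements plus $\varepsilon_0\varphi_\Gamma$. The degenerate cases $g=e$, $h=e$, or $gh^{-1}=e$ are either excluded by the length hypothesis or still diagonal (with $\varepsilon_0$ as the sign). One should also keep the convention $H^{\mathrm{op}}(\lambda_h^*)=H(\lambda_h)^*$ consistent, but only the diagonal nature matters here.
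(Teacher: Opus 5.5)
Your reduction is correct. $H_\varepsilon$ and $H^{\mathrm{op}}_{\varepsilon'}$ are diagonal in $\{\lambda_k\}$, so the identity is equivalent to $(\alpha-\gamma)(\beta-\delta)=0$. Deducing this from Lemma~\ref{lem; Cotlar-gp} is exactly how the corollary is meant to follow; the paper gives no further argument.

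The gap is the step where you invoke that lemma: its dichotomy is false, and so is the corollary. The lemma's proof infers from $\ell(g_1)>N$ and $|\mathrm{Link}(t)|\le N$ that no later syllable can be moved to the front. But \eqref{eq; Fi1} bounds the number of \emph{vertices} commuting with $t$, not the number of \emph{syllables} a letter of type $t$ can pass. Such a letter commutes past any alternating word in two non-adjacent vertices of $\mathrm{Link}(t)$, however long. The proof also never examines the syllables of $g_2''h_1$, which are the ones that matter for $\mathrm{MCP}(gh^{-1})$.

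Here is a counterexample. Let $\Gamma$ be the tree with edges $\{s,t\},\{u,t\},\{t,t'\},\{t',s'\},\{t',u'\}$, so \eqref{eq; Fi1} holds with $N=3$, and take every $G_v=\mathbb{Z}_2$. For $m\ge 2$ (or $4m+4>3N$ for any admissible $N$) put
\[
g=(su)^m s\,t',\qquad h^{-1}=t\,(s'u')^m s',\qquad gh^{-1}=(su)^m s\,t'\,t\,(s'u')^m s'.
\]
Then $gh^{-1}$ is a reduced word of length $4m+4>3N$.

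\begin{itemize}
\item No two adjacent letters of $g$ or of $h=(s'u')^m s'\,t$ commute, so $g\in\mathscr{L}_s$ and $h\in\mathscr{L}_{s'}$.
\item In $gh^{-1}$, the letter $t$ commutes with $s,u,t'$ and reaches the front. Likewise $t'$ commutes with $t,s',u'$ and reaches the end. Hence $gh^{-1}\in\mathscr{L}_{\{s,t\}}$ and $hg^{-1}\in\mathscr{L}_{\{s',t'\}}$.
\end{itemize}

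In your notation, $\alpha=\varepsilon_s$, $\gamma=\varepsilon_{\{s,t\}}$, $\beta=\varepsilon'_{s'}$ and $\delta=\varepsilon'_{\{s',t'\}}$ are independent signs. Take $\varepsilon_s=\varepsilon'_{s'}=1$ and $\varepsilon_{\{s,t\}}=\varepsilon'_{\{s',t'\}}=-1$, which is possible even with $\varepsilon=\varepsilon'$. Then the left side equals $\lambda_{gh^{-1}}$ and the right side equals $-3\lambda_{gh^{-1}}$. Since $P_{>3N}$ fixes $\lambda_{gh^{-1}}$, the same example also violates \eqref{eq;Cotlar-ab} in Theorem A.

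So the statement cannot be proved as stated. Your scalar criterion shows exactly what is needed: $gh^{-1}$ and $g$ must lie in the same class among $\{e\},\mathscr{L}_v,\mathscr{L}_{\Gamma_0}$, or $hg^{-1}$ and $h$ must. The lemma's argument secures this only under a uniform bound on the length of reduced words all of whose letters lie in a single $\mathrm{Link}(t)$, i.e. every link a clique. \eqref{eq; Fi1} does not provide this.
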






\section{Haagerup-type inequalities on graph products of finite von Neumann algebras}\label{sec; Haagerup-type-inequality}

In this section, we provide equivalent characterizations of the hypothesis on  $P_{\le 3N}$ in Theorem \ref{thm;Hilbert-trans-finite-vnA}.
We then apply Theorem~\ref{thm;Hilbert-trans-finite-vnA} to graph products of finite-dimensional von Neumann algebras to obtain \ref{Thm B}. These provide a rich class of examples on which we can study the boundedness of Hilbert transforms, including  von Neumann algebras associated with graph products of finite groups, right-angled Hecke von Neumann algebras, and graph products of finite quantum groups.

Throughout this section, let $\Gamma$ be a simplicial graph satisfying~\eqref{eq; Fi1}, and assume further that $N \geq 1$ in~\eqref{eq; Fi1}. Note that when $N = 0$, the graph product reduces to a free product. In this case, the Cotlar identity, together with properties of conditional expectations, implies the boundedness of Hilbert transforms \cite{mei2017free}, and no assumption about the projection is needed.

\begin{prop}\label{thm;equivalence-Pd-Haagerup-type}
    Let  $\mathcal{M}_{\Gamma}$ be a graph product of finite von Neumann algebras associated with $\Gamma$.  
    Then the following statements are equivalent.
    \begin{enumerate}
        \item[(i)] The projection $P_{ \le 3N}$ is bounded from $L_2(\mathcal{M}_{\Gamma})$ to $\mathcal{M}_{\Gamma}$.
        
        \item[(ii)]  $\mathcal{M}_{\Gamma}$ is a graph product of finite-dimensional von Neumann algebras such that 
        \begin{equation*}
            \underset{s\in V\Gamma}{\sup}\{n_s: \dim(\mathcal{M}_{s}) = n_s\}<\infty.
        \end{equation*}

        \item[(iii)] $\mathcal{M}_{\Gamma}$ satisfies a Haagerup-type inequality, that is,   $P_d$  is bounded from  $L_2(\mathcal{M}_{\Gamma})$ to $\mathcal{M}_{\Gamma}$ for any $d\geq 1$: 
\begin{equation}\label{eq: RD for M}
\| P_d (x) \| \leq C_d \|x\| _{L_2(\mathcal{M}_{\Gamma})}.
\end{equation}
    \end{enumerate}
\end{prop}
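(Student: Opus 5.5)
I will prove the cycle (iii)$\Rightarrow$(i)$\Rightarrow$(ii)$\Rightarrow$(iii).

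\emph{(iii)$\Rightarrow$(i).} Write $P_{\le 3N}=\varphi_\Gamma(\cdot)\mathbf{1}+\sum_{d=1}^{3N}P_d$. This is a finite sum, and each summand is bounded $L_2\to\mathcal{M}_\Gamma$. The state term is bounded because $|\varphi_\Gamma(x)|\le\|x\|_2$.

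\emph{(i)$\Rightarrow$(ii).} Fix $s\in V\Gamma$. Since $N\ge1$, $3N\ge1$, so $P_{\le 3N}$ restricts to the identity on the copy $\lambda_s(\mathcal{M}_s)\subseteq\mathcal{M}_{\le 1}$. The inclusion $\mathcal{M}_s\hookrightarrow\mathcal{M}_\Gamma$ is trace preserving, so we get $\|x\|_\infty\le C\|x\|_{L_2(\mathcal{M}_s,\varphi_s)}$ for all $x\in\mathcal{M}_s$, with $C$ independent of $s$. I would deduce finite dimensionality with a uniform bound in two steps.
\begin{enumerate}
\item Any projection $e\in\mathcal{M}_s$ satisfies $1=\|e\|\le C\varphi_s(e)^{1/2}$, so every nonzero projection has trace at least $C^{-2}$. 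Hence there are at most $C^2$ pairwise orthogonal nonzero projections, and $\mathcal{M}_s$ has no diffuse part. It is therefore a finite direct sum of matrix algebras $\bigoplus_j M_{n_j}$, where the total number of minimal orthogonal projections $\sum_j n_j$ is at most $C^2$.
\item Consequently $\dim\mathcal{M}_s=\sum_j n_j^2\le(\sum_j n_j)^2\le C^4$, uniformly in $s$.
\end{enumerate}

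\emph{(ii)$\Rightarrow$(iii).} This is the main step. Let $D=\sup_s\dim\mathcal{M}_s$. For each $s$, choose an orthonormal basis of $L_2(\mathcal{M}_s^\circ)$, say $\{e^s_i\}_{i=1}^{\dim\mathcal{M}_s-1}$. By finite dimensionality, $\|e^s_i\|_\infty\le K$, where $K$ depends only on $D$. The reason is that on a finite-dimensional algebra with faithful trace the $L_2$ and operator norms are equivalent, and the equivalence constant is controlled by the minimal trace of a minimal projection.

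This needs care: the trace weights could be tiny, since $\varphi_s$ is an arbitrary faithful tracial state. Uniformity in $s$ then really requires a lower bound on minimal projection traces, which (ii) alone may not supply. I would first try to show that the constant can be taken uniform. If that fails, I would note that (i)$\Rightarrow$ this lower bound, and adopt it as the intended reading of (ii).

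Next, $P_d(x)$ expands in reduced words $e^{s_1}_{i_1}\cdots e^{s_d}_{i_d}$ over types in $\mathcal{W}_\Gamma^{\mathrm f}$ of length $d$. I would bound $\|P_d x\|$ by a Haagerup-type argument. Decompose each syllable into creation, diagonal and annihilation parts using $P_sxP_s^\perp$, $P_sxP_s$ and $P_s^\perp xP_s$. A reduced word of length $d$ then becomes a sum of at most $3^d$ patterns, and each pattern is determined by a splitting of the word.

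For a fixed splitting pattern, the operator $\sum_w \hat x(w)\,(\text{pattern})$ acts like a matrix indexed by the annihilated and created parts. Its norm is bounded by $K^{\,d}$ times the $\ell_2$ norm of the coefficients, via a Cauchy--Schwarz / Hilbert--Schmidt estimate as in Haagerup's original argument. Commutation in the graph introduces overcounting of splittings. This is bounded because the number of ways to split a length-$d$ word into left and right pieces, modulo braid moves, is at most $(N+1)^d$ or so, using \eqref{eq; Fi1}.

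Altogether this should give $C_d\le c(D,N,K)^d$. I expect this combinatorial norm estimate to be the main obstacle. As an alternative, I would try inducting on $d$ through the amalgamated free product decomposition $(\mathcal{M}_s\bar\otimes\mathcal{M}_{\mathrm{Link}(s)})\ast_{\mathcal{M}_{\mathrm{Link}(s)}}\mathcal{M}_{\Gamma\setminus\{s\}}$, combined with known Haagerup inequalities for amalgamated free products over finite-dimensional subalgebras.
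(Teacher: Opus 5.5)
Your proofs of (iii)$\Rightarrow$(i) and (i)$\Rightarrow$(ii) are correct. The projection--trace argument is a cleaner version of the paper's one-line sketch, and it actually proves more than (ii): every nonzero projection of every $\mathcal{M}_s$ has trace at least $C^{-2}$.

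Your worry about the uniformity of $K$ is well founded, and you should not try to derive it from (ii): for infinite $V\Gamma$ the implication (ii)$\Rightarrow$(iii) is false. Let $\Gamma$ be a countable disjoint union of edges with vertices $s_1,s_2,\dots$, so \eqref{eq; Fi1} holds with $N=1$. Let $\mathcal{M}_{s_n}=\mathbb{C}^2$ with $\varphi_{s_n}(a,b)=2^{-n}a+(1-2^{-n})b$. Every $\mathcal{M}_{s_n}$ has dimension $2$. Yet $u_n=(1-2^{-n},-2^{-n})\in\mathcal{M}_{s_n}^\circ$ satisfies $P_1u_n=P_{\le 3N}u_n=u_n$ and $\|u_n\|/\|u_n\|_2=(2^n-1)^{1/2}\to\infty$. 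So (i) and (iii) both fail already for $d=1$. Right-angled Hecke algebras over infinite graphs with $q_{s_n}=n$ fail in the same way, since $\|T_{s_n}\|_2=1$ while $\|T_{s_n}\|=\sqrt{n}$.

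The paper's proof overlooks this. It goes through Lemma~\ref{prop;Equi-char-finite-dim-RD-1}, whose proof is omitted and whose conclusion already fails for $k=l=m=1$ in this example. Your repair is the right one: replace (ii) by the condition that there is $c>0$ with $\varphi_s(e)\ge c$ for every nonzero projection $e\in\mathcal{M}_s$ and every $s\in V\Gamma$. This condition has three useful features:
\begin{enumerate}
\item[(a)] it forces $\dim\mathcal{M}_s\le c^{-2}$ and $\|x\|\le c^{-1/2}\|x\|_2$ on every $\mathcal{M}_s$;
\item[(b)] it is exactly what your argument extracts from (i);
\item[(c)] it coincides with (ii) when $V\Gamma$ is finite, or when the $\mathcal{M}_s$ are group algebras of finite groups with their canonical traces.
\end{enumerate}

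Granting this, your plan for the main step is the same Haagerup-type argument the paper borrows from Caspers--Fima: expand a length-$d$ reduced operator into normally ordered creation--diagonal--annihilation terms and estimate by Cauchy--Schwarz. The paper organizes the estimate by blocks rather than by patterns. Lemma~\ref{prop;Pm-Pl-finite-dim} gives $P_mxP_l=0$ unless $|m-k|\le l\le m+k$. Each block then satisfies a bound $\|P_mxP_l\|\le\psi(k)\|x\|_2$. Finally, Cauchy--Schwarz over the at most $2k+1$ relevant blocks in each row yields $\|x\|\le(2k+1)\psi(k)\|x\|_2$. Your cruder constant $c(D,N,K)^d$ is enough, since (iii) and Theorem B only need $C_d<\infty$.

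When you write the estimate out, remember that $V\Gamma$ may be infinite. There are then infinitely many types and cliques of each length, so nothing can be bounded by counting them; the sums must be controlled by orthogonality relative to a fixed input vector. The relevant multiplicities are uniformly bounded:
\begin{enumerate}
\item[(a)] letters that are incomparable in the heap of a reduced word pairwise commute, so the heap has width at most $N+1$;
\item[(b)] by Dilworth's theorem, a reduced word of any length therefore has at most $\binom{j+N}{N}$ prefixes of length $j$;
\item[(c)] the diagonal letters must lie among the at most $N+1$ initial letters of what survives the annihilation step.
\end{enumerate}
This gives polynomial, rather than exponential, overcounting of splittings.
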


Heuristically, the proposition above states that a ``local piecewise" Haagerup-type inequality implies the ``global" one when the block length on graph products is considered. This result differs from \cite[Theorem 4.1]{Laura2013} even in the discrete group case. 
Moreover, this generalizes the fact that right-angled Hecke von Neumann algebras associated with finite graphs satisfy the Haagerup-type inequality in \cite[Theorem 3.4]{caspers2021graph}, and provides a new proof that avoids the use of Khintchine-type inequalities.

 \medskip

The main difficulty in the above proposition is the implication (ii)$\implies$(iii). We begin with a few observations.
Note that for any $s \in V\Gamma$ and any $\Gamma_0 \in \mathrm{Cliq}(\Gamma, \ge 2)$, the algebras $\mathcal{M}_s$ and $\mathcal{M}_{\Gamma_0}$ satisfy the Haagerup-type inequality in (iii) since each $\mathcal{M}_s$ is finite dimensional, 
and by \eqref{eq; Fi1} each $\mathcal{M}_{\Gamma_0}$ is finite dimensional. It follows that for any $\Gamma' \in \mathrm{Cliq}(\Gamma, \ge 1)$ there exists a polynomial $\phi_{\Gamma'}$ such that for all $x \in \mathcal{M}_{\Gamma}$ and all $d \ge 1$,
$\|P_d(x)\|\le \phi_{\Gamma'}(d)\|x\|_2$.


The following lemma plays a key role in establishing \eqref{eq: RD for M}. 
The extension  of $P_d$ as an orthogonal projection from $L_2(\mathcal{M}_\Gamma) = \mathcal{H}$ onto the closure of the linear span of $\mathcal{H}^{\circ}_{\mathbf{w}}$ with $\ell(\mathbf{w}) = d$ will  still be denoted by $P_d$. 

\begin{lem}\label{prop;Pm-Pl-finite-dim}
	Let $\Gamma$ be a simplicial graph and 
    $\mathcal{N}_{\Gamma}$ be the associated graph product of finite von Neumann algebras. Let $k,l,m\geq 1$ and $x\in P_k(\mathcal{N}_{\Gamma})$. If $P_mxP_l\ne 0$, then $|m-k|\le l\le m+k$.
\end{lem}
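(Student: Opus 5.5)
By linearity and density, we reduce to the case where $x$ is a single reduced operator $x=x_1\cdots x_k$ of type $s_1\cdots s_k\in\mathcal{W}_\Gamma$, and where $\eta=\eta_1\otimes\cdots\otimes\eta_l\in\mathcal{H}^{\circ}_{\mathbf{v}}$ is an elementary tensor with $\ell(\mathbf{v})=l$. Indeed, $P_k(\mathcal{N}_\Gamma)$ is the $L_2$-closure of the span of such reduced operators, and the tensors $\eta$ span a dense subspace of $P_l\mathcal{H}$. We must show that if $P_m x\eta\neq 0$ then $|m-k|\le l\le m+k$. By symmetry it suffices to prove the single inequality $m\le k+l$ together with $k\le m+l$ and $l\le m+k$. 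Since $P_m x P_l\neq0$ iff $P_l x^* P_m\neq 0$, and $x^*$ is again (a combination of) reduced operators of length $k$ (of reversed type), the inequality $l\le m+k$ follows from $m\le l+k$ applied to $x^*$. So the two things to prove are $m\le k+l$ and $k\le m+l$.

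To do this we compute $x\eta=x_1(x_2(\cdots(x_k\eta)))$ by applying the syllables one at a time, using the explicit action formulas recalled in the preliminaries. For $b\in\mathcal{M}_s^\circ$ acting on an elementary tensor of type $\mathbf{w}$: if $\mathbf{w}\in\mathcal{W}^c_{\Gamma_s}$ (creation), the result lies in length $\ell(\mathbf{w})+1$. If $\mathbf{w}\in\mathcal{W}_{\Gamma_s}$, the result is a sum of a diagonal term of length $\ell(\mathbf{w})$ and an annihilation term of length $\ell(\mathbf{w})-1$. Hence each syllable changes the length by at most $1$, and by induction on $k$ the vector $x\eta$ lies in $\bigoplus_{|m-l|\le k}P_m\mathcal{H}$. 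This immediately gives $m\le k+l$. It also gives $m\ge l-k$, which is the bound $l\le m+k$ and so provides a second, direct route to it.

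The remaining inequality $k\le m+l$ is the substantive part. Naively, the length could drop to $0$ even with $k$ large, but that would require annihilations at a rate inconsistent with a reduced $x$. The cleanest route is through the group-level combinatorics. Let $w,v\in W_\Gamma$ be the elements of types $s_1\cdots s_k$ and $\mathbf{v}$. Every tensor component produced in $x\eta$ has a type $u$ obtained from $w v$ by deleting some letters that are absorbed as diagonals, namely letters $s$ of $w$ that meet an equal letter of $v$ which has been moved to the front. Using Lemma~\ref{lem;reduced-decompsition-groups} applied to $W_\Gamma$ (and in the operator form, Proposition~\ref{prop; ab-decomposition}), write $w=\mathbf{w}_1\mathbf{c}\mathbf{a}$ and $v=\mathbf{a}^{-1}\mathbf{c}'\mathbf{v}_2$. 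The annihilations are exactly the letters of $\mathbf{a}$, so at most $\min(k,l)$ letters of $x$ are annihilated. The diagonal letters are those in the clique part $\mathbf{c}$, and they preserve length. The resulting length then satisfies $m\ge \ell(\mathbf{w}_1)+\ell(\mathbf{v}_2)+(\text{surviving part of }\mathbf{c})\ge k-\ell(\mathbf{a})-r$, where $r=\ell(\mathbf{c})$. Since $\ell(\mathbf{a})+r\le l$, we get $m\ge k-l$, i.e.\ $k\le m+l$.

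I expect this last step to be the main obstacle. The difficulty is to check rigorously that the annihilation/diagonal bookkeeping for the Fock-space action matches the factorization of Proposition~\ref{prop; ab-decomposition}: the letters of $x$ that get annihilated or absorbed diagonally must correspond to distinct letters of $\eta$, so that the total number absorbed is at most $l$. I would handle this by induction on $k$, tracking for each surviving component which letters of $\mathbf{v}$ have been consumed. Each annihilation or diagonal step consumes a distinct front letter of the current tensor that originated from $\eta$, because $x$ is reduced. This means a creation from $x_{j+1}$ can never be undone by $x_j$, since $s_j s_{j+1}\cdots$ being reduced forbids $x_j$ from hitting a letter created by $x$ itself. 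Consequently the number of non-creation steps is at most $l$, which yields $m\ge k-l$.
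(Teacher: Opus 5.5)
Your route differs from the paper's. The paper never expands $x$ syllable by syllable. It invokes the normal-ordered decomposition of \cite[Proposition 2.5]{caspers2021graph}, which writes a reduced operator of length $k$ as a finite sum of products of $i$ creation operators (leftmost), $r$ diagonal operators along a clique, and $k-i-r$ annihilation operators (rightmost). Because the annihilations and diagonals act first, directly on the legs of $P_l\xi$, they automatically use distinct legs. So $l\ge k-i$ is immediate, and together with $m=l-k+2i+r$ this gives $|m-k|\le l\le m+k$. Your sequential expansion gives $|m-l|\le k$ for free, since each syllable moves the length by at most one and no reducedness is needed. But for $k\le m+l$ you must prove by hand that on every branch the non-creation steps hit pairwise distinct legs of $\eta$, which is exactly the combinatorial content of that normal ordering. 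Your argument is therefore more self-contained, while the paper's is short because it outsources this step.

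As written, that step is not yet justified. Your stated reason, that $x_j$ cannot undo a creation by $x_{j+1}$, only covers adjacent syllables, where it is trivial since $s_j\neq s_{j+1}$. You must also rule out two further cases. The first is a leg created by $x_p$ being hit by $x_q$ with $q<p-1$. The second is a leg of $\eta$ that was acted on diagonally being hit again later, since a diagonal step does not remove the leg.

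Both follow from one minimality argument. Take such a pair $q<p$ (so $s_q=s_p=t$) with $p-q$ minimal. Reducedness of $s_q\cdots s_p$ gives $q<r<p$ with $s_r\neq t$ not adjacent to $t$. The leg is initial right after step $p$ and must be initial at step $q$. Only the creation of a letter not commuting with $t$ can block it, and unblocking would require annihilating a leg created by $x$, i.e.\ a bad pair with smaller gap. Hence the leg is initial just before step $r$. Then $s_r$ cannot be initial, because distinct initial letters of a reduced word commute. So step $r$ is a creation, and the leg is blocked for good, which is a contradiction.

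This gives $a+d\le l$, where $a$ is the number of annihilations and $d$ the number of diagonal steps on a branch. Hence $m=l+k-d-2a\ge k-l$.

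Finally, drop your middle paragraph. Applied to $W_\Gamma$, Lemma~\ref{lem;reduced-decompsition-groups} forces $r=0$, since in $W_{\Gamma_0}\cong(\mathbb{Z}_2)^r$ one cannot have $\ell(g_2')=\ell(g_2'')=\ell(g_2'g_2'')=r\ge 1$. Moreover, which letters are annihilated and which are treated diagonally depends on the branch, because each meeting splits into both. So ``the annihilations are exactly the letters of $\mathbf{a}$'' is false: for non-adjacent $s,t$, with $x$ of type $ts$ and $\eta$ of type $st$, the branches have lengths $3$, $1$ and $0$.
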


\begin{proof}
    By linearity, we may assume that $x \in \mathcal{N}_k$ is a reduced operator.
 By \cite[Proposition 2.5]{caspers2021graph}, 
 $x$ can be written as a finite sum of terms, each of which is a product of creation, diagonal, and annihilation operators. More precisely, each such term consists of $i$ creation operators, $r$ diagonal operators, and $k - (i+r)$ annihilation operators, where $0 \leq i \leq k-r$ and $0 \leq r \leq k$.

Let $\xi \in \mathcal{H}$ be such that $P_m x P_l \xi \neq 0$. By linearity we may assume that $P_l \xi$ is an elementary tensor in some $\mathcal{H}_{\mathbf{w}}^{\circ}$ with $\ell(\mathbf{w}) = l$.
Then there exists at least one term in the above decomposition whose action on $P_l \xi$ is nonzero. For such a term, the annihilation operators remove $k - (i+r)$ tensor components, the diagonal operators preserve the length, and the creation operators add $i$ tensor components. Hence the resulting vector has length
$$
m=l-(k-(i+r))+i=l-k+2i+r.$$
Moreover, since this term does not vanish, after the annihilation step there must remain at least $r$ tensor components for the diagonal operators to act, which yields
$$ l-(k - (r+i))\geq r,$$
that is, $l\geq k-i$. Combining these observations, we obtain
 $l=m+k-2i-r$ and $l\geq k-i$. Since $0 \leq i \leq k-r$ and $0 \leq r \leq k$, it follows that $|m-k|\le l\le m+k$.
\end{proof}

The next lemma provides an equivalent characterization of the inequality~\eqref{eq: RD for M} on $\mathcal{M}_{\Gamma}$. Its proof follows the same strategy as that of \cite[Proposition 5.7]{caspers2017graph}, with minor  modifications. Therefore we omit the proof. However, we highlight several points relevant to our setting in the following.

First, under condition~\eqref{eq; Fi1}, the number of terms appearing in the decomposition of a reduced operator of length $k$ into a finite sum of products of creation, diagonal, and annihilation operators, as in \cite[Proposition 2.5]{caspers2021graph}, is bounded by a polynomial in $k$. Indeed, it follows from \eqref{eq; Fi1} that the maximal length of the diagonal part appearing in each term is  $k' = \min\{k,N\}$. Moreover, for each $i= 0,\ldots, k'$, the number of terms containing diagonal operators of length $i$ is also bounded polynomially in $k$. For instance, consider the terms without diagonal operators and containing exactly one creation operator. In this case, there are at most
$\binom{k'}{1}$ possible ways to move the creation operator to the first position, as in \cite[Proposition 2.5]{caspers2021graph}. The remaining cases are treated similarly and likewise yield polynomial bounds in $k$. Following the proof of \cite[Proposition 5.7]{caspers2017graph}, and replacing the constant $M$ appearing in the last line of their proof by a polynomial in $k$, up to a constant depending only on $N$, yields the desired result.

Furthermore, although \cite[Proposition 5.7]{caspers2017graph} is formulated in the framework of compact quantum groups, 
the argument carries over to our setting. 
More precisely, in \cite[Proposition 5.7]{caspers2017graph}, the assumption that each discrete quantum group $\widehat{\mathbb{G}}_{\Gamma_0}$  associated with a clique $\Gamma_0\in \mathrm{Cliq}(\Gamma)$ has property (RD) yields the estimate  $\|x'\xi'\|_2\le c_{\Gamma_0}\|x'\|_2\|\xi'\|_2$, where $x'$ is a diagonal operator in the compact quantum group associated with $\Gamma_0$ and $\xi'$ is a vector in $\mathcal{H}$. 
Since cliques in $\Gamma$ include singleton subgraphs, 
this implies that each vertex compact quantum group is finite. 
Moreover, while \cite[Proposition 5.7]{caspers2017graph} assumes that $\Gamma$ is finite, the proof remains valid under our assumptions that $\Gamma$ is a simplicial graph satisfying~\eqref{eq; Fi1} and that $\mathcal{M}_{\Gamma}$ satisfies~\eqref{eq;M-finite-dimentional-condition}. The latter assumption is necessary when $\Gamma$ is an infinite graph.

\begin{lem}\label{prop;Equi-char-finite-dim-RD-1}
Let $\Gamma$ be a simplicial graph satisfying (\ref{eq; Fi1}) and $\mathcal{M}_{\Gamma}$ be the graph product of finite-dimensional von Neumann algebras satisfying (\ref{eq;M-finite-dimentional-condition}). 
	Let $k,l,m\geq 1$. There exists a polynomial $\psi$ such that for every $x\in P_k(\mathcal{M}_{\Gamma})$, one has
	$$\|P_mxP_l\|\le \psi(k)\|x\|_2\quad \text{if } |k-l|\le m\le k+l,$$
    and $\|P_mxP_l\| =0 $ otherwise.
\end{lem}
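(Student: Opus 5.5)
The vanishing statement is Lemma~\ref{prop;Pm-Pl-finite-dim}: if $P_mxP_l\neq 0$ then $|m-k|\le l\le m+k$, which rearranges to $|k-l|\le m\le k+l$. So only the norm bound needs proof.

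For the norm bound we follow \cite[Proposition 5.7]{caspers2017graph}.

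\emph{Step 1: decomposition.} By linearity and density, first take $x=\sum_{\mathbf{w}} x_{\mathbf{w}}$, a finite sum of reduced operators of length $k$ with types $\mathbf{w}$. Using \cite[Proposition 2.5]{caspers2021graph}, write each reduced operator as a sum of words of the form (creation)(diagonal)(annihilation). The diagonal block consists of pairwise commuting letters, so it is supported on a clique. By \eqref{eq; Fi1} it therefore has length $r\le k'=\min\{k,N\}$. The number of rearrangement patterns for fixed $(i,r)$ is polynomial in $k$, as explained before the statement.

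Regroup $P_mxP_l$ according to the triple $(i,r,k-i-r)$; the equation $m=l-k+2i+r$ from the proof of Lemma~\ref{prop;Pm-Pl-finite-dim} links these to $l$ and $m$. Each group has the form
\[
P_m\sum_{\mathbf{u}\mathbf{c}\mathbf{v}}\mathrm{cr}(x_{\mathbf{u}})\,\mathrm{diag}(x_{\mathbf{c}})\,\mathrm{ann}(x_{\mathbf{v}})\,P_l .
\]
This is a sum of at most polynomially many (in $k$) pieces. Since the number of groups is also polynomial in $k$, it suffices to bound each piece by a constant times $\|x\|_2$.

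\emph{Step 2: bounding a single piece.} Test a piece on $\xi\in P_l\mathcal{H}$, decomposed along simple tensors. The annihilation part pairs the last $k-i-r$ letters of $x$ with the first letters of $\xi$, which is a contraction in the Hilbert-space sense. The creation part tensors the first $i$ letters of $x$ on the left. By Cauchy--Schwarz, these two steps cost at most $\|x\|_2\|\xi\|_2$, exactly as in the Haagerup argument for free groups.

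The diagonal part is the only non-free part. It acts by an element of the finite-dimensional algebra $\mathcal{M}_{\Gamma_0}$, where $\Gamma_0$ has at most $N$ vertices. By \eqref{eq;M-finite-dimentional-condition}, $\dim\mathcal{M}_{\Gamma_0}\le n^N$ with $n=\sup_s n_s$. This yields $\|y\|\le c\|y\|_2$ for $y\in\mathcal{M}_{\Gamma_0}$, and more generally an estimate $\|x'\xi'\|_2\le c\|x'\|_2\|\xi'\|_2$ for diagonal $x'$ acting on the relevant tensor leg. The constant $c$ depends only on $N$ and $n$, not on the particular clique, so it stays uniform even when $\Gamma$ is infinite.

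Combining the three steps, each piece has norm at most $c\|x\|_2$. Summing over the polynomially many pieces gives $\|P_mxP_l\|\le\psi(k)\|x\|_2$.

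\emph{Main obstacle.} The delicate point is Step 2: the sums over the cut points $\mathbf{u},\mathbf{c},\mathbf{v}$ and over the matching prefix of $\xi$ must be organised so that Cauchy--Schwarz applies with the full $\|x\|_2$. This requires canonical representatives in $\mathcal{W}^{\mathrm{f}}_\Gamma$ together with the unitaries $Q_{\mathbf{w}}$, so that distinct types yield orthogonal vectors. I would handle this by fixing, for each type, the ordering given by \eqref{eq;tag-CO}; this reduces the problem to an orthogonal direct sum in which \cite[Proposition 5.7]{caspers2017graph} applies verbatim, with its constant $M$ replaced by the polynomial count above.
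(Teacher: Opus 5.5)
Your outline follows the route the paper indicates: the vanishing part comes from Lemma~\ref{prop;Pm-Pl-finite-dim}, and the norm bound from \cite[Proposition 5.7]{caspers2017graph} with its constant replaced by a polynomial count of creation/diagonal/annihilation patterns. However, the step that is supposed to make this work for infinite $\Gamma$ is false.

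You claim that \eqref{eq;M-finite-dimentional-condition} gives $\|y\|\le c\|y\|_2$ on every clique algebra, with $c$ depending only on $N$ and $n$. For a finite-dimensional algebra with a faithful tracial state, the optimal constant in $\|y\|\le c\|y\|_2$ is $\delta^{-1/2}$, where $\delta$ is the smallest trace of a minimal projection. A dimension bound says nothing about $\delta$.

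This breaks the lemma itself, not only your proof. Let $\Gamma$ be an infinite disjoint union of edges, so $N=1$. Take $\mathcal{M}_s=\mathbb{C}^2$ with $\varphi_s$ giving weights $t_s,1-t_s$, where $t_s\to 0$. Let $y_s=e_s-t_s\mathbf{1}\in\mathcal{M}_s^\circ$, with $e_s$ the projection of trace $t_s$. Then $\|y_s\|_2=\sqrt{t_s(1-t_s)}$. Since $(y_s^2)^\circ=(1-2t_s)y_s$, the operator $P_1y_sP_1$ maps the unit vector $y_s\xi_s/\|y_s\|_2\in\mathcal{H}_s^\circ$ to $(1-2t_s)$ times itself. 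Hence $\|P_1y_sP_1\|/\|y_s\|_2\to\infty$, although every $\mathcal{M}_s$ is $2$-dimensional. So no $\psi(1)$ exists in the case $k=l=m=1$, and your diagonal estimate in Step~2 already fails for a single letter. The paper's remarks before the lemma assert the same uniformity, so this gap is inherited from the statement rather than introduced by you.

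Your argument is sound in two situations. The first is when $\Gamma$ is finite: there are finitely many clique algebras, each finite-dimensional with a faithful trace, and you take the largest constant. The second is infinite $\Gamma$ satisfying \eqref{eq; Fi1} under the stronger hypothesis $\delta:=\inf_{s}\min\{\varphi_s(e): e \text{ a minimal projection of } \mathcal{M}_s\}>0$. Equivalently, $\sup_s\|\mathrm{id}:L_2(\mathcal{M}_s)\to\mathcal{M}_s\|<\infty$.

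This hypothesis implies \eqref{eq;M-finite-dimentional-condition}, since it forces $\dim\mathcal{M}_s\le\delta^{-2}$. Minimal projections of $\mathcal{M}_{\Gamma_0}=\bar\otimes_{s\in V\Gamma_0}\mathcal{M}_s$ are tensor products of minimal projections. So the hypothesis gives $\|y\|\le\delta^{-|V\Gamma_0|/2}\|y\|_2$ uniformly over cliques, which is exactly what your Step~2 needs.

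The hypothesis holds automatically for finite groups with $|G_s|\le n$, since $\|\lambda(f)\|\le\|f\|_{\ell_1}\le\sqrt{n}\,\|f\|_{\ell_2}$. For right-angled Hecke algebras it requires $q_s$ bounded away from $0$ and $\infty$, because the spectral projections of $T_s^{(\mathbf{q})}$ have traces $1/(1+q_s)$ and $q_s/(1+q_s)$.

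A minor point: $|\mathrm{Link}(s)|\le N$ permits cliques with $N+1$ vertices. So the diagonal block has length at most $\min\{k,N+1\}$ and $\dim\mathcal{M}_{\Gamma_0}\le n^{N+1}$. This does not affect polynomiality.
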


We state an equivalent characterization of the above lemma below, which shows that $P_d$ is completely bounded from  $L_2(\mathcal{M}_{\Gamma})$ to $\mathcal{M}_{\Gamma}$ for any $d\in\mathbb{N}$. In particular, $\mathcal{M}_\Gamma$ satisfies \eqref{eq: RD for M}.
\begin{lem}\label{prop;finite-dim-Pd-2-infty}
	 We keep the assumptions from the above lemma. The following statements are equivalent.
	\begin{enumerate}
		\item[(i)] There exists a polynomial $\psi$ such that for every $x\in P_k(\mathcal{M}_{\Gamma})$, one has
		$$\|P_mxP_l\|\le \psi(k)\|x\|_2\quad \text{if } |k-l|\le m\le k+l,$$
		and $\|P_mxP_l\| =0 $ otherwise. 
		
		\item[(ii)] The projection $P_k$ is completely bounded from $L_2(\mathcal{M}_{\Gamma})$ to $\mathcal{M}_{\Gamma}$. More precisely, 
        there exists a polynomial $\psi$ such that $$\| P _k \|_{{\rm cb},2\rightarrow\infty}\leq \psi(k).$$
	\end{enumerate}
\end{lem}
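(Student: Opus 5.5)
The equivalence is of the standard "Haagerup inequality via block decomposition" type. I would prove (i)$\Rightarrow$(ii) and (ii)$\Rightarrow$(i) separately, working throughout on $\mathcal{H}=L_2(\mathcal{M}_\Gamma)$ with the grading $\mathcal{H}=\bigoplus_{l\ge0}P_l\mathcal{H}$, where $P_0$ is the projection onto $\mathbb{C}\Omega$. Both directions rest on the support information from Lemma~\ref{prop;Pm-Pl-finite-dim}: for $x\in P_k(\mathcal{M}_\Gamma)$, the operator $x$ maps $P_l\mathcal{H}$ into $\bigoplus_{|k-l|\le m\le k+l}P_m\mathcal{H}$. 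Equivalently, the block matrix $(P_mxP_l)_{m,l}$ is supported in the band $|m-l|\le k$.

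For (i)$\Rightarrow$(ii), assume the bounds $\|P_mxP_l\|\le\psi(k)\|x\|_2$ and fix $\xi=\sum_l\xi_l$ with $\xi_l\in P_l\mathcal{H}$.

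1. Write $\|x\xi\|_2^2=\sum_m\|\sum_l P_mxP_l\xi_l\|_2^2$. For fixed $m$ the inner sum runs over the at most $2k+1$ values of $l$ with $|m-l|\le k$.
2. By Cauchy--Schwarz, $\|\sum_l P_mxP_l\xi_l\|_2^2\le(2k+1)\sum_{l:|m-l|\le k}\psi(k)^2\|x\|_2^2\|\xi_l\|_2^2$.
3. Summing over $m$, each $l$ is counted at most $2k+1$ times, so $\|x\xi\|_2\le(2k+1)\psi(k)\|x\|_2\|\xi\|_2$.
4. Hence $\|P_k(x)\|\le(2k+1)\psi(k)\|x\|_2$, which gives the $L_2\to\mathcal{M}_\Gamma$ bound with a polynomial constant.

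To obtain the completely bounded version, I would run the same argument for $x\in M_n\otimes P_k(\mathcal{M}_\Gamma)$, replacing $P_m$ by $\mathrm{id}_n\otimes P_m$ and $\|x\|_2$ by the $M_n(L_2)$ column/operator-space norm. This needs the entrywise estimate of (i) to hold at the matrix level. I would obtain that by noticing that the proof of Lemma~\ref{prop;Equi-char-finite-dim-RD-1} (the modified \cite[Proposition 5.7]{caspers2017graph} argument) is written via creation, diagonal and annihilation operators, and so is already matrix-valued. This is the step I expect to be most delicate: the identification of $\|P_k\|_{\mathrm{cb},2\to\infty}$ with a $\|\cdot\|_{M_n(L_2^c)}$-type bound must be made carefully, and I would follow the convention of \cite{caspers2017graph}.

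For (ii)$\Rightarrow$(i), assume $P_k$ is (completely) bounded from $L_2$ to $\mathcal{M}_\Gamma$ with norm at most $\psi(k)$. Then for $x\in P_k(\mathcal{M}_\Gamma)$ we have $x=P_k(x)$, so $\|P_mxP_l\|\le\|x\|\le\psi(k)\|x\|_2$. The vanishing of $P_mxP_l$ outside the range $|k-l|\le m\le k+l$ is exactly Lemma~\ref{prop;Pm-Pl-finite-dim} applied with $\mathcal{N}_\Gamma=\mathcal{M}_\Gamma$. This direction is immediate, so the real content lies in the band-summation step above.
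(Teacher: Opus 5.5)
Your scalar argument for (i)$\Rightarrow$(ii) is the paper's argument: the support condition from Lemma~\ref{prop;Pm-Pl-finite-dim}, Cauchy--Schwarz over the at most $2k+1$ admissible $l$, and the constant $(2k+1)\psi(k)$. Your (ii)$\Rightarrow$(i) is also the paper's. The gap is in the completely bounded step, specifically in the norm you propose.

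If $L_2(\mathcal{M}_\Gamma)$ carries the column structure (or OH), then (ii) is false, so no argument can prove it. Take distinct $g_1,\dots,g_d$ of block length $k$ in a graph product of finite groups, and put $x=\sum_j\lambda_{g_j}\otimes a_j$. Applying $\varphi_\Gamma\otimes\mathrm{id}$ to $xx^*$ gives
\[
\Big\|\sum_{j}\lambda_{g_j}\otimes a_j\Big\|\ \ge\ \Big\|\sum_j a_ja_j^*\Big\|^{1/2}.
\]
So on $\mathrm{span}\{\delta_{g_j}\}$ the map $P_k$ dominates $\mathrm{id}:C_d\to R_d$ (column to row, cb norm $\sqrt d$), or $\mathrm{id}:OH_d\to R_d$ (cb norm $d^{1/4}$). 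Here $d$ is unbounded when $V\Gamma$ is infinite. It also grows exponentially in $k$ already when $\Gamma$ is two disjoint edges with $\mathbb{Z}_2$ at each vertex. Hence no polynomial bound can hold. For the same reason the matrix version of (i) cannot hold in the column norm, since your band summation would turn it into the false bound above. The creation/annihilation decomposition therefore cannot supply it. Already for $k=1$, $(P_0\otimes\mathrm{id})y(P_1\otimes\mathrm{id})$ has norm $\|(\varphi_\Gamma\otimes\mathrm{id})(yy^*)\|^{1/2}$, which is a row quantity.

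The paper instead measures $y=\sum_{i,j}y_{ij}\otimes e_{ij}\in P_k(\mathcal{M}_\Gamma\bar\otimes\mathbb{M}_n)$ in $L_2(\mathcal{M}_\Gamma\bar\otimes\mathbb{M}_n)$ with the unnormalized trace on $\mathbb{M}_n$, so that $\|y\|_2^2=\sum_{i,j}\|y_{ij}\|_2^2$. With this norm you need not reopen the proof of Lemma~\ref{prop;Equi-char-finite-dim-RD-1}. The matrix estimate follows formally from the scalar (i), because a block matrix has norm at most that of the scalar matrix of block norms:
\[
\Big\|\sum_{i,j}P_my_{ij}P_l\otimes e_{ij}\Big\|\le\Big(\sum_{i,j}\|P_my_{ij}P_l\|^2\Big)^{1/2}\le\psi(k)\,\|y\|_{L_2(\mathcal{M}_\Gamma\bar\otimes\mathbb{M}_n)}.
\]
Your band summation on $\mathcal{H}\otimes\ell_2^n$ then gives $\|y\|\le(2k+1)\psi(k)\|y\|_2$. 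This is the ``standard argument'' the paper invokes. In this sense the completely bounded statement is automatic from the bounded one.
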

\begin{proof}
    The implication (ii)$\implies$(i) is immediate. It remains to prove  (i)$\implies$(ii). 
    Let $x\in P_k(\mathcal{M}_{\Gamma})$ and $\xi\in \mathcal{H}$. By Lemma \ref{prop;Pm-Pl-finite-dim} and (i), 
\begin{align*}
 \|x\xi\|_2^2 \le \sum_m\Big(\sum_l\|P_mxP_l\xi\|_2\Big)^2 &=\sum_m\Big(\sum_{|m-k|\le l\le m+k}\|P_mxP_l\xi\|_2\Big)^2\\
   &\le \psi(k)^2\|x\|_2^2\sum_m\Big(\sum_{|m-k|\le l\le m+k}\|P_l\xi\|_2\Big)^2.  
\end{align*}  
   Applying the Cauchy-Schwartz inequality, we have 
    $$\sum_m\Big(\sum_{|m-k|\le l\le m+k}\|P_l\xi\|_2 \Big)^2\le (2k+1)\sum_m\sum_{|m-k|\le l\le m+k}\|P_l\xi\|_2^2\le (2k+1)^2\sum_l\|P_l\xi\|_2^2.$$
   Since $\sum_l\|P_l\xi\|_2^2 = \|\xi\|_2^2$, it follows that 
    $$\|x\xi\|_2 \le (2k+1)\psi(k)\|x\|_2\|\xi\|_2,\quad  \forall \xi\in \mathcal{H}.$$
    For the complete boundedness of $P_k$, a standard argument shows that (i) implies that, for every $n\in \mathbb{N}$ and every $y\in P_k(\mathcal{M}_{\Gamma} \bar\otimes \mathbb{M}_n)$,
    $$
    \| (P_m \otimes {\rm id_{\mathbb{M}_n}}) \,y\, (P_l \otimes {\rm id_{\mathbb{M}_n}}) \|_{\mathcal{M}_{\Gamma} \bar\otimes \mathbb{M}_n}\leq \psi(k)\|y\|_{L_2(\mathcal{M}_{\Gamma} \bar\otimes \mathbb{M}_n)}.
    $$
    Using this estimate and arguing as above, we obtain (ii).
\end{proof}

Now we give the proof of Proposition \ref{thm;equivalence-Pd-Haagerup-type}.
 \begin{proof}[Proof of Proposition \ref{thm;equivalence-Pd-Haagerup-type}] 
For (i)$\implies$(ii), note that $P_1 = P_{\le 3N} \circ P_1$, and that $P_1$ is bounded on $L_2(\mathcal{M}_{\Gamma})$. Hence, if $\| P_{\leq 3N} (x) \| \leq C \|x\| _{L_2(\mathcal{M}_{\Gamma})}$,  then for any $s \in V\Gamma$ and any $y \in \mathcal{M}_s^\circ$, one has
 $$\|y\| = \|P_1 y\|\le C\|y\|_2.$$ 
 This, together with the decomposition $y = \varphi_s(y)\cdot \mathbf{1} + y^\circ$, implies that $\mathcal{M}_s$ is finite dimensional for all $s\in V\Gamma$, and the uniform upper bound for their dimensions is a constant depending on $C$.
 
 For (ii)$\implies$(iii), by Lemma \ref{prop;finite-dim-Pd-2-infty} (ii), there exists a polynomial $\psi$ such that
    $$\|P_d(x)\|\le \psi(d)\|x\|_2, \quad \text{for all } d\geq 1, x\in \mathcal{M}_{\Gamma}.$$
    By density, $P_{ d}$ extends to a bounded operator from $L_2(\mathcal{M}_\Gamma)$ to $\mathcal{M}_\Gamma$. 
    
 Finally, (iii) $\implies$(i) is immediate.
\end{proof}

By \ref{Thm A} and Proposition \ref{thm;equivalence-Pd-Haagerup-type}, graph products of finite-dimensional von Neumann algebras satisfying (\ref{eq;M-finite-dimentional-condition}) satisfy the assumptions in Theorem \ref{thm;Hilbert-trans-finite-vnA}. 

\begin{proof}[Proof of \ref{Thm B}]
It is an immediate consequence of \ref{Thm A}, Theorem \ref{thm;Hilbert-trans-finite-vnA} and Proposition \ref{thm;equivalence-Pd-Haagerup-type}.
\end{proof}

\begin{rem}\label{rem; no cb}
The completely bounded version of \ref{Thm B} remains valid. This is an immediate consequence of Lemma~\ref{prop;finite-dim-Pd-2-infty} (ii), which states that for every $d\geq 1$, $P_{d}$ is completely bounded from $L_2(\mathcal{M}_{\Gamma})$ to $\mathcal{M}_{\Gamma}$. Then interpolation yields that $P_{d}$ is completely bounded from $L_2(\mathcal{M}_{\Gamma})$ to $L_q(\mathcal{M}_{\Gamma})$ for every $q>2$. Consequently, \ref{Thm A} implies that both $H_\varepsilon$ and $H_\varepsilon^{\mathrm{op}}$ are completely bounded.

\end{rem}

Similar to the free product case, there are also several direct consequences of \ref{Thm B} which we state below. The first result follows from the observation that for every $s\in V\Gamma$ if we choose $\varepsilon_s=1$, $\varepsilon_0 = \varepsilon_{\Gamma_0}=\varepsilon_t=-1$ for every $t\neq s$ and every $\Gamma_0\in \mathrm{Cliq}(\Gamma, \ge2)$, then  we have $\mathcal{L}_s=\frac{1+H_\varepsilon}{2}$ and $\mathcal{R}_s=\frac{1+H_\varepsilon^{\rm op}}{2}$. 
Similarly, the same argument applies to $\mathcal{L}_{\Gamma_0}$ for every $\Gamma_0\in \mathrm{Cliq}(\Gamma, \ge2)$.

\begin{cor}
Let $1<p<\infty$, let $\Gamma$ be a simplicial graph satisfying (\ref{eq; Fi1}), and let $\mathcal{M}_{\Gamma}$ be the graph product of finite-dimensional von Neumann algebras satisfying (\ref{eq;M-finite-dimentional-condition}). For every  $\Gamma'\in \mathrm{Cliq}(\Gamma, \ge1)$, let  $\mathcal{L}_{\Gamma'}$ and $\mathcal{R}_{\Gamma'}$ be the projections given in Definition \ref{def;Ls}. Let $c_p>0$ be the constant appearing in \ref{Thm B} which gives the norm bound for $H_\varepsilon$. Then for any $x\in L_p(\mathcal{M}_{\Gamma})$,
    \begin{equation*}
        \|\mathcal{L}_{\Gamma'}(x)\|_p\le \frac{1+c_p}{2}\|x\|_p\quad \text{and}\quad    \|\mathcal{R}_{\Gamma'}(x)\|_p\le \frac{1+c_p}{2}\|x\|_p.
    \end{equation*}
\end{cor}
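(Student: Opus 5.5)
The plan is to express each projection $\mathcal{L}_{\Gamma'}$ and $\mathcal{R}_{\Gamma'}$ as an affine combination of the identity and a suitably chosen Hilbert transform $H_\varepsilon$, respectively $H_\varepsilon^{\mathrm{op}}$, and then invoke \ref{Thm B}. Fix $\Gamma'\in\mathrm{Cliq}(\Gamma,\ge1)$. If $\Gamma'$ is a single vertex $s$, choose $\varepsilon_s=1$ and set all other signs ($\varepsilon_0$, $\varepsilon_t$ for $t\ne s$, and $\varepsilon_{\Gamma_0}$ for all $\Gamma_0\in\mathrm{Cliq}(\Gamma,\ge2)$) equal to $-1$. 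If $\Gamma'$ has at least two vertices, choose $\varepsilon_{\Gamma'}=1$ and all remaining signs equal to $-1$.

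The key identity to verify is that the projections $\varphi_\Gamma(\cdot)\mathbf{1}$, $\mathcal{L}_s$ ($s\in V\Gamma$) and $\mathcal{L}_{\Gamma_0}$ ($\Gamma_0\in\mathrm{Cliq}(\Gamma,\ge2)$) are mutually orthogonal and sum to the identity on $L_2(\mathcal{M}_\Gamma)$. Orthogonality is noted after Definition~\ref{def;Ls}. For completeness, observe that every reduced operator $a$ has a maximal clique prefix $\mathrm{MCP}(a)$ whose syllables lie in a uniquely determined clique $\Gamma_a$. Either $\Gamma_a$ is a single vertex $s$, in which case every equivalent reduced expression starts with $s$ and $a\in\mathcal{M}_{\Gamma,s}$, or $\Gamma_a$ has at least two vertices and $a\in\mathcal{S}_{\Gamma_a}$. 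Together with $\mathbb{C}\mathbf{1}$, the corresponding subspaces therefore span a dense subspace. With the sign choice above this gives $H_\varepsilon=2\mathcal{L}_{\Gamma'}-\mathrm{id}$, that is, $\mathcal{L}_{\Gamma'}=\tfrac{1}{2}(\mathrm{id}+H_\varepsilon)$.

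Applying the same reasoning to the adjoint subspaces gives $\mathcal{R}_{\Gamma'}=\tfrac12(\mathrm{id}+H_\varepsilon^{\mathrm{op}})$. Alternatively, this follows from the relation $H_\varepsilon^{\mathrm{op}}(a)=H_\varepsilon(a^*)^*$ combined with $\mathcal{R}_{\Gamma'}(x)=\mathcal{L}_{\Gamma'}(x^*)^*$.

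Both identities first hold on the dense $*$-subalgebra spanned by $\mathbf{1}$ and the reduced operators. By \ref{Thm B}, the operators $H_\varepsilon$ and $H_\varepsilon^{\mathrm{op}}$ extend boundedly to $L_p(\mathcal{M}_\Gamma)$ with norm at most $c_p$. Using the triangle inequality and density, we obtain $\|\mathcal{L}_{\Gamma'}(x)\|_p\le\frac12(\|x\|_p+c_p\|x\|_p)$, and the same bound for $\mathcal{R}_{\Gamma'}$. The constant $c_p$ from \ref{Thm B} can be taken uniform over all sign choices $\varepsilon$, since the constants in \ref{Thm A} and Theorem~\ref{thm;Hilbert-trans-finite-vnA} do not depend on $\varepsilon$.

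The only point needing care is the completeness of the decomposition, in particular checking that $\mathrm{MCP}(a)$ determines a single clique so that no reduced operator is counted twice or omitted. I expect this to follow directly from the definition of $\operatorname{Init}(a)$.
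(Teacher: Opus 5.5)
Your proposal is correct and matches the paper's argument: choose the signs so that $H_\varepsilon = 2\mathcal{L}_{\Gamma'}-\mathrm{id}$ and $H_\varepsilon^{\mathrm{op}} = 2\mathcal{R}_{\Gamma'}-\mathrm{id}$, then invoke Theorem B. Your extra checks are valid and fill in details the paper leaves implicit. These are that $\varphi_\Gamma$, the $\mathcal{L}_s$ and the $\mathcal{L}_{\Gamma_0}$ form an orthogonal resolution of the identity indexed by the clique of initial letters, and that $c_p$ does not depend on $\varepsilon$ because the induction in Theorem~\ref{thm;Hilbert-trans-finite-vnA} starts from $\|H_\varepsilon\|_{2\to 2}=1$.
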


Applying \ref{Thm B} and the noncommutative Khintchine inequality in \cite[Th\'eor\`emes 1, 3, 4]{Lust1986} or \cite[Theorem 0.1]{LustPi1991}, we obtain the following estimate for $\bigl(\mathcal{L}_{\Gamma'}(x)\bigr)_{\Gamma'\in \mathrm{Cliq}(\Gamma,\ge 1)}$ in noncommutative
vector-valued $L_p$-spaces (see \cite{Pi1998} for further details). Recall that for a sequence $(x_s)$ in $L_p(\mathcal{M})$, $\| (x_s) \|_{L_p(\mathcal{M};\ell_2^c)}=\big\| \big(\sum_s|x_s|^2\big)^{\frac{1}{2}}\big\|_p$ and $\| (x_s) \|_{L_p(\mathcal{M};\ell_2^r)}=\big\| \big(\sum_s|x_s^*|^2\big)^{\frac{1}{2}}\big\|_p$. The mixed column-row norm is 
$$
\| (x_s) \|_{L_p(\mathcal{M};\ell_2^{cr})}= \begin{cases}
\max\left\{
\left\|(x_s)\right\|_{L_p(\mathcal{M};\ell_2^{\,c})},
\left\|(x_s)\right\|_{L_p(\mathcal{M};\ell_2^{\,r})}
\right\}
\quad \text{if } 2\le p<\infty;\\
\underset{y_s+z_s=x_s}{\inf}
\left(
\|(y_s)\|_{L_p(\mathcal{M};\ell_2^{\,c})}
+
\|(z_s)\|_{L_p(\mathcal{M};\ell_2^{\,r})}
\right),
\quad \text{if } 0<p<2.
\end{cases}
$$
\begin{cor}
Under the same assumptions as in the previous corollary, we have
\[
(\alpha_p c_p)^{-1} \|x\|_p
\leq
\left\|
\bigl( \mathcal{L}_{\Gamma'}(x)\bigr)_{\Gamma'\in \mathrm{Cliq}(\Gamma,\ge 1)}
\right\|_{L_p(\mathcal{M}_\Gamma;\ell_2^{cr})}
\leq
\beta_p c_p \|x\|_p ,
\]
where $\alpha_p$ and $\beta_p$ are the constants appearing in the lower
and upper estimates of the noncommutative Khintchine inequality. Similar estimates also apply to $\bigl( \mathcal{R}_{\Gamma'}(x)\bigr)_{\Gamma'\in \mathrm{Cliq}(\Gamma,\ge1)}$. 
\end{cor}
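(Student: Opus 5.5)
The plan is to combine \ref{Thm B} with the noncommutative Khintchine inequality by averaging over random signs. The family $\{\mathcal{L}_{\Gamma'}\}_{\Gamma'\in\mathrm{Cliq}(\Gamma,\ge1)}$, together with $\varphi_\Gamma$, consists of mutually orthogonal projections summing to the identity. Hence for each sign choice
\[
H_\varepsilon(x)=\varepsilon_0\varphi_\Gamma(x)+\sum_{\Gamma'}\varepsilon_{\Gamma'}\mathcal{L}_{\Gamma'}(x),
\]
where for singletons $\Gamma'=\{s\}$ we write $\mathcal{L}_{\{s\}}=\mathcal{L}_s$. First I reduce to $x$ a finite linear combination of $\mathbf{1}$ and reduced operators. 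Then only finitely many $\mathcal{L}_{\Gamma'}(x)$ are nonzero and every sum below is finite; the general case follows by density, using the $L_p$-boundedness of each $\mathcal{L}_{\Gamma'}$ from the previous corollary. I will treat the constant term separately. It is harmless, since $|\varphi_\Gamma(x)|\le\|x\|_p$, so I may assume $\varphi_\Gamma(x)=0$ after adjusting constants. Alternatively, I can include it as an extra index in the Khintchine sum, where it contributes at most $\|x\|_p$.

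For the upper estimate, I take $\varepsilon=(\varepsilon_{\Gamma'})$ to be i.i.d. Rademacher variables. The noncommutative Khintchine inequality of Lust-Piquard and Lust-Piquard--Pisier gives
\[
\big\|(\mathcal{L}_{\Gamma'}(x))\big\|_{L_p(\mathcal{M}_\Gamma;\ell_2^{cr})}\le \beta_p\Big(\mathbb{E}\big\|\sum_{\Gamma'}\varepsilon_{\Gamma'}\mathcal{L}_{\Gamma'}(x)\big\|_p^p\Big)^{1/p}=\beta_p\big(\mathbb{E}\|H_\varepsilon(x)\|_p^p\big)^{1/p}\le\beta_p c_p\|x\|_p.
\]
The last step uses \ref{Thm B}, whose constant $c_p$ is uniform in $\varepsilon$. (It follows from the proof of Theorem \ref{thm;Hilbert-trans-finite-vnA}, since the constants there depend only on $P_{\le 3N}$ and the $L_2$ norm $1$ of $H_\varepsilon$.)

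For the lower estimate I use the involutive identity $H_\varepsilon H_\varepsilon=\mathrm{id}$, which holds because the projections are orthogonal and $\varepsilon_{\Gamma'}^2=1$. Thus $x=H_\varepsilon(H_\varepsilon(x))$, and $\|x\|_p\le c_p\|H_\varepsilon(x)\|_p$ for every $\varepsilon$. Averaging over $\varepsilon$ and applying the lower Khintchine bound gives
\[
\|x\|_p\le c_p\big(\mathbb{E}\|H_\varepsilon(x)\|_p^p\big)^{1/p}\le \alpha_pc_p\big\|(\mathcal{L}_{\Gamma'}(x))\big\|_{L_p(\mathcal{M}_\Gamma;\ell_2^{cr})}.
\]
The main point to check is the form of Khintchine used. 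For $1<p<2$ the inequality is stated with the infimum over decompositions, but it is valid in exactly the form above: Rademacher averages of $\sum\varepsilon_{\Gamma'} x_{\Gamma'}$ are equivalent to the $\ell_2^{cr}$ norm for all $1<p<\infty$, in any semifinite von Neumann algebra (here $L_\infty(\Omega)\bar\otimes\mathcal{M}_\Gamma$). The statement for $\mathcal{R}_{\Gamma'}$ follows identically using $H^{\mathrm{op}}_\varepsilon$, or by applying the $\mathcal{L}$ result to $x^*$, since $\mathcal{R}_{\Gamma'}(x)=\mathcal{L}_{\Gamma'}(x^*)^*$ and the $\ell_2^{cr}$ norm is invariant under adjoints.
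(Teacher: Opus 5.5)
Your argument is the one the paper intends; the paper only cites Theorem~B and the noncommutative Khintchine inequality. You fill it in correctly: Rademacher averaging of $H_\varepsilon(x)$ with $c_p$ uniform in $\varepsilon$, $H_\varepsilon^2=\mathrm{id}$ for the lower bound, and adjoints for the $\mathcal{R}_{\Gamma'}$ case, with $\alpha_p,\beta_p$ placed as in the statement. The one correction is the constant term. In the lower estimate it cannot be removed by adjusting constants, since for $x=\mathbf{1}$ every $\mathcal{L}_{\Gamma'}(x)$ vanishes. That inequality therefore holds only when $\varphi_\Gamma(x)=0$, or with $\varphi_\Gamma(x)\mathbf{1}$ adjoined to the family; this imprecision is already in the statement. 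In the upper estimate, your extra-index device gives exactly $\beta_p c_p$, because the $\ell_2^{cr}$ norm only increases when entries are added.
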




We now discuss some examples that fit into the context of \ref{Thm B} in the following three subsections.


\subsection{ Graph products of groups}\label{subsection;group vnA}
In this subsection, we specialize \(\mathcal{M}_{\Gamma}\) to the graph product of group von Neumann algebras \(\mathcal{L}G_{\Gamma}\), where \(\Gamma\) is a simplicial graph satisfying \eqref{eq; Fi1}. In this case, the Haagerup-type inequality in Proposition \ref{thm;equivalence-Pd-Haagerup-type}(iii) implies the property of rapid decay (property (RD)) of $G_{\Gamma}$, first introduced by Jolissaint in \cite{jolissaint1990rapidly}.

\begin{defn}
    Let $G$ be a discrete group. We say that $G$ has property (RD) with respect to a length function $L$ on $G$ if there exists a polynomial $\psi$ such that for any $d\geq 1$ and any $x\in \mathbb{C}G$ supported on group elements of length $d$, 
    $$\|x\|\le \psi(d)\|x\|_2.$$
    We say $G$ has property (RD) if there exists a length function $L$ on $G$ with respect to which $G$ has property (RD).
\end{defn}

The graph product $G_\Gamma$ 
has property (RD) with respect to the block length $\ell$  is equivalent to saying that  
there exists a polynomial $\psi$ such that for every $d\geq 1$,
	 $$\|P_{d}(x)\|_\infty\le \psi(d)\|P_{d}(x)\|_2\le \psi(d)\|x\|_2, \quad \forall x\in \mathcal{L}G_{\Gamma}.$$
By density, $P_{d}$ extends to a bounded operator from $L_2(\mathcal{L}G_{\Gamma})$ to $\mathcal{L}G_{\Gamma}$. 
Then by Proposition \ref{thm;equivalence-Pd-Haagerup-type}, 
it is equivalent to the requirement that $G_\Gamma$ be a graph product of finite groups 
satisfying 
\begin{equation}\label{eq;group-cardinality-finite}
    \sup_{s\in V\Gamma}\{n_s: n_s =|G_s|\}<\infty.
\end{equation} 
Therefore, under assumption that $G_\Gamma$ 
has property (RD) with respect to the block length $\ell$, or under assumption \eqref{eq;group-cardinality-finite} on $G_\Gamma$, we apply \ref{Thm B} to the associated group von Neumann algebra $\mathcal{L}G_\Gamma$ to obtain the boundedness of Hilbert transforms on $L_p(\mathcal{L}G_\Gamma)$ for all $1<p<\infty$.
\medskip

\subsection{Right-angled Hecke von Neumann algebras}\label{subsection;Hecke}

In this subsection, we apply \ref{Thm B} to right-angled Hecke von Neumann algebras that are graph products of 2-dimensional von Neumann algebras (see {\cite[Corollary 3.4]{caspers2020absence}}). 
We first recall some basic definitions. 

Let $\Gamma$ be a simplicial graph and $W_{\Gamma}$ the associated right-angled Coxeter group. 
Denote by $\mathbb{R}^{(W_{\Gamma},V\Gamma)}_{>0}$ (resp. $\mathbb{C}^{(W_{\Gamma},V\Gamma)}$) the set of  tuples $\mathbf{q}:= (q_s)_{s\in V\Gamma}$ in $\mathbb{R}^{V\Gamma}_{>0}$ (resp. $\mathbb{C}^{V\Gamma}$) satisfying $q_s = q_t$ whenever $s$ and $t$ are conjugate in $W_{\Gamma}$. Fix $\mathbf{q} = (q_s)_{s\in V\Gamma}\in \mathbb{R}^{(W_{\Gamma}, V\Gamma)}_{>0}$. For $s\in V\Gamma$ and $w\in W_{\Gamma}$ set $$
p_s: = \frac{q_s-1}{\sqrt{q_s}},
$$
and for $w\in W_{\Gamma}$ with reduced expression $w = s_1\ldots s_n$, define
$$\mathbf{q}_w := q_{s_1}\ldots q_{s_n}.$$
\begin{defn}[{\cite[Proposition 19.1.1]{davis2012geometry}}]
  The \textit{right-angled Hecke algebra} associated with  $(W_{\Gamma}, V\Gamma)$ and the multiparameter $\mathbf{q}$ is the unique $*$-algebra $\mathbb{C}_{\mathbf{q}}[W_{\Gamma}]$ with a linear basis $\{T_w^{(\mathbf{q})}: w\in W_{\Gamma}\}$ satisfying that for 
  $s\in V\Gamma$ and $w\in W_{\Gamma}$,  
    \begin{equation}\label{eq;Hecke-alg}
  (T_w^{(\mathbf{q})})^* = T_{w^{-1}}^{(\mathbf{q})} \quad \text{and}\quad       T_s^{(\mathbf{q})}T_w^{(\mathbf{q})} = \left\{
\begin{array}{lr}
T_{sw}^{(\mathbf{q})} & \text{if } \ell(sw)>\ell(w),\\
T_{sw}^{(\mathbf{q})} + p_s T_{w}^{(\mathbf{q})} & \text{if } \ell(sw)<\ell(w).
\end{array} \right.
    \end{equation}
\end{defn}
In particular, relation (\ref{eq;Hecke-alg}) implies that 
$$(T_{s}^{(\mathbf{q})}- q_s^{1/2})(T_{s}^{(\mathbf{q})}+ q_s^{-1/2}) = 0,\quad s\in V\Gamma.$$
For any reduced expression $s_1\ldots s_n$ of $w\in W_{\Gamma}$, one has 
$$T_w^{(\mathbf{q})} = T_{s_1}^{(\mathbf{q})}\ldots T_{s_n}^{(\mathbf{q})},$$
and this definition is independent of the choice of reduced expressions (see \cite[Lemma 19.1.2]{davis2012geometry}).

The study of general Hecke von Neumann algebras was initiated by Dymara \cite{Dym06}.
Let $\ell^2(W_{\Gamma})$ be the Hilbert space  with the canonical orthonormal basis $\{\delta_w: w\in W_{\Gamma}\}$. The right-angled Hecke algebra $\mathbb{C}_{\mathbf{q}}[W_{\Gamma}]$ admits a faithful  $*$-representation $\pi_{\mathbf{q}}: \mathbb{C}_{\mathbf{q}}[W_{\Gamma}]\to B\big(\ell^2(W_{\Gamma})\big)$ given by the left multiplication defined as follows:
$$\pi_{\mathbf{q}} (T_{s}^{\mathbf{(q)}})\delta_w = \left\{
\begin{array}{lr}
\delta_{sw} & \text{if } \ell(sw)>\ell(w),\\
\delta_{sw} + p_s \delta_{w} & \text{if } \ell(sw)<\ell(w).
\end{array} \right.$$

\begin{defn}(\cite{davis2012geometry} and \cite{Dym06})
    The \textit{right-angled Hecke von Neumann algebra} $\mathcal{M}_{\mathbf{q}}(W_{\Gamma})$ associated with $(W_{\Gamma}, V\Gamma)$ and $\mathbf{q}$ is defined by
    $$\mathcal{M}_{\mathbf{q}}(W_{\Gamma}):= \pi_{\mathbf{q}}(\mathbb{C}_{\mathbf{q}}[W_{\Gamma}])^{''}.$$
    The \textit{reduced right-angled Hecke $C^*$-algebra} $C^*_{r,\mathbf{q}}(W_{\Gamma})$ is defined by the norm closure of $\pi_{\mathbf{q}}(\mathbb{C}_{\mathbf{q}}[W_{\Gamma}])$. 
    We identify $\mathcal{M}_{\mathbf{q}}(W_{\Gamma})$ with the image of $\pi_{\mathbf{q}}$ in $B\big(\ell^2(W_{\Gamma})\big)$.
\end{defn}
There exists a canonical normal faithful tracial state $\varphi_{\Gamma,\mathbf{q}}$ on $\mathcal{M}_{\mathbf{q}}(W_{\Gamma})$ such that  $\varphi_{\Gamma,\mathbf{q}}(T^{\mathbf{(q)}}_{w}) = 0$ for any $e\ne w \in W_{\Gamma}$ (see \cite[Section 1.9]{caspers2021graph}). 
If $q_s = 1$ for all $s\in V\Gamma$, the associated Hecke von Neumann algebra  $\mathcal{M}_{\mathbf{q}}(W_{\Gamma})$ coincides with the group von Neumann algebra $\mathcal{L}W_{\Gamma}$. In this sense, Hecke von Neumann algebras can be viewed as 
$\mathbf{q}$-deformations of group algebras.  In general, however, $\mathcal{M}_{\mathbf{q}}(W_{\Gamma})\neq \mathcal{L}W_{\Gamma}$. For right-angled Hecke von Neumann algebras associated with irreducible Coxeter systems and single-parameter deformations, factoriality results are available which 
characterize precisely when they are factors (in particular, $\mathrm{II}_1$ factors); see \cite{garncarek2016factoriality}.

\medskip 



Applying Lemma \ref{prop;finite-dim-Pd-2-infty}  to right-angled Hecke von Neumann algebras associated with simplicial graphs satisfying (\ref{eq; Fi1}), we obtain the following proposition, showing that they satisfy the Haagerup-type inequality.  
This 
extends \cite[Theorem 3.4]{caspers2021graph}, where only finite simplicial graphs are considered. Moreover, our approach provides an alternative proof that does not use the Khintchine inequalities employed in \cite{caspers2021graph}. We state this result below.

\begin{prop}\label{prop;RD-Hecke-vnA}
Let $\Gamma$ be a simplicial graph satisfying (\ref{eq; Fi1}), $W_{\Gamma}$ the associated right-angled Coxeter group, $\mathbf{q} = (q_s)_{s\in V\Gamma}\in \mathbb{R}_{>0}^{(W_{\Gamma}, V\Gamma)}$ and $\mathcal{M}_{\mathbf{q}}(W_{\Gamma})$ the corresponding right-angled Hecke von Neumann algebra. 
    Then there exists a polynomial $\psi$ 
    such that, for all $d\geq 1$ and $x\in\mathcal{M}_{\mathbf{q}}(W_{\Gamma})$, 
    $$\|P_{d} (x)\|\le \psi(d) \|x\|_2.$$    
\end{prop}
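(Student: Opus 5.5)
The plan is to realize $\mathcal{M}_{\mathbf{q}}(W_{\Gamma})$ as a graph product of finite-dimensional von Neumann algebras with uniformly bounded dimensions, and then invoke Lemma~\ref{prop;Equi-char-finite-dim-RD-1} and Lemma~\ref{prop;finite-dim-Pd-2-infty}.

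First, by \cite[Corollary 3.4]{caspers2020absence}, $\mathcal{M}_{\mathbf{q}}(W_{\Gamma})$ is the graph product over $\Gamma$ of the algebras $\mathcal{M}_s:=\mathcal{M}_{q_s}(\mathbb{Z}_2)=\mathrm{span}\{1,T_s^{(\mathbf{q})}\}$, with the restricted trace. Each $\mathcal{M}_s$ is two-dimensional, because $T_s$ satisfies the quadratic relation $(T_s-q_s^{1/2})(T_s+q_s^{-1/2})=0$. Hence condition \eqref{eq;M-finite-dimentional-condition} holds with $\sup_s n_s=2$, independently of the possibly unbounded parameters $q_s$.

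Under this identification, the block length of the reduced operator $T_w^{(\mathbf{q})}=T_{s_1}\cdots T_{s_n}$ is exactly $\ell(w)$. This requires $T_{s_i}\in\mathcal{M}_{s_i}^\circ$, which holds since $\varphi_{\Gamma,\mathbf{q}}(T_s)=0$. Consequently $P_d$ is the projection onto $\overline{\mathrm{span}}\{T_w:\ell(w)=d\}$, and this agrees with the $P_d$ of the graph product structure.

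Next, with $\Gamma$ satisfying \eqref{eq; Fi1}, apply Lemma~\ref{prop;Equi-char-finite-dim-RD-1} to obtain a polynomial $\psi$ with $\|P_mxP_l\|\le\psi(k)\|x\|_2$ for $x\in P_k(\mathcal{M}_{\mathbf{q}}(W_\Gamma))$, vanishing outside $|k-l|\le m\le k+l$. Lemma~\ref{prop;finite-dim-Pd-2-infty}(ii) then gives $\|P_d(x)\|\le(2d+1)\psi(d)\|P_d x\|_2\le\psi'(d)\|x\|_2$ for all $x\in\mathcal{M}_{\mathbf{q}}(W_\Gamma)$, which is the claim.

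The main obstacle is making sure the constants in Lemma~\ref{prop;Equi-char-finite-dim-RD-1} are uniform over vertices, since the $q_s$ may vary. The diagonal operators come from cliques of size at most $N$. The relevant clique algebras $\mathcal{M}_{\Gamma_0}$ are tensor products of at most $N$ two-dimensional algebras, of dimension at most $2^N$. The estimate $\|x'\xi'\|_2\le c_{\Gamma_0}\|x'\|_2\|\xi'\|_2$ then needs $c_{\Gamma_0}$ bounded uniformly, and this is not automatic.

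To handle this, I would bound $\|y\|\le C\|y\|_2$ on $\mathcal{M}_s^\circ=\mathbb{C}T_s$ directly, where $\|T_s\|_2=1$ and $\|T_s\|=\max(q_s^{1/2},q_s^{-1/2})$. This is uniform if and only if the $q_s$ stay in a compact subset of $(0,\infty)$. Because the uniformity is only used over the $\le N$ vertices of a clique and $q_s$ is constant on conjugacy classes, I expect to reduce to finitely many conjugacy classes of generators. Failing that, I would record the dependence of the constant on $\sup_s\max(q_s,q_s^{-1})$.
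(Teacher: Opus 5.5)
Your route is the paper's: identify $\mathcal{M}_{\mathbf{q}}(W_\Gamma)$, via \cite[Corollary 3.4]{caspers2020absence}, with a graph product of two-dimensional algebras and apply Lemma~\ref{prop;Equi-char-finite-dim-RD-1} and Lemma~\ref{prop;finite-dim-Pd-2-infty}. You are right that uniformity in $s$ is the real issue, and the paper's one-line proof ignores it, relying on bounded dimension alone. But your proposed way out fails, and no argument can work as the statement stands.

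In a right-angled Coxeter group, distinct generators are never conjugate: the homomorphism $W_\Gamma\to\bigoplus_{t\in V\Gamma}\mathbb{Z}/2\mathbb{Z}$, $s\mapsto e_s$, separates them. So the constraint defining $\mathbb{R}^{(W_\Gamma,V\Gamma)}_{>0}$ is vacuous, $\mathbf{q}$ is an arbitrary positive family, and there is no reduction to finitely many classes. Nor is uniformity needed only within a clique; it is needed over all vertices already for $d=1$. Take $x=T_s^{(\mathbf{q})}$. Then $\|x\|_2^2=\varphi_{\Gamma,\mathbf{q}}(1+p_sT_s^{(\mathbf{q})})=1$. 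Also $x$ is self-adjoint with spectrum $\{q_s^{1/2},-q_s^{-1/2}\}$, so $\|P_1(x)\|=\max(q_s^{1/2},q_s^{-1/2})$. Now let $\Gamma$ be a countable disjoint union of edges (so $N=1$) and $q_{s_n}=n$; then no value $\psi(1)$ exists. For the same reason, \eqref{eq;M-finite-dimentional-condition} is not the hypothesis that actually drives Lemma~\ref{prop;Equi-char-finite-dim-RD-1}. The diagonal operators there need a uniform constant in $\|y\|\le C\|y\|_2$ on every vertex and clique algebra, and that constant is governed by the traces of minimal projections, not by dimension.

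Your fallback is the correct repair, but it changes the statement: you must add the hypothesis $Q:=\sup_{s\in V\Gamma}\max(q_s,q_s^{-1})<\infty$, which is automatic for finite $\Gamma$. The spectral projections of $T_s^{(\mathbf{q})}$ for the eigenvalues $q_s^{1/2}$ and $-q_s^{-1/2}$ have traces $1/(1+q_s)$ and $q_s/(1+q_s)$. This gives $\|y\|\le(1+Q)^{1/2}\|y\|_2$ on each $\mathcal{M}_s$. Each clique algebra is an abelian tensor product of at most $N+1$ (not $N$) such algebras, so $\|y\|\le(1+Q)^{(N+1)/2}\|y\|_2$ there. Creation and annihilation parts are controlled by $\|\cdot\|_2$ automatically, so these are the only constants the Caspers--Fima counting argument consumes. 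With them your proof goes through and yields a polynomial $\psi$ depending on $N$ and $Q$.
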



Therefore, right-angled Hecke von Neumann algebras satisfy the Haagerup-type inequality in Proposition \ref{thm;equivalence-Pd-Haagerup-type} (iii).
An application of \ref{Thm B} yields that Hilbert transforms are bounded on $L_p(\mathcal{M}_{\mathbf{q}}(W_{\Gamma}))$ for all $1<p<\infty$ and all tuples $\mathbf{q}$.


\subsection{Graph products of compact quantum groups}\label{sec:quantum-groups}

In this subsection, we apply \ref{Thm B} to graph products of compact quantum groups. We begin by recalling some preliminaries on compact quantum groups in the sense of \cite{KuVaes03}.
Let $\mathcal{M}$ be a finite von Neumann algebra equipped with
a comultiplication, that is, a unital normal $*$-homomorphism $\Delta: \mathcal{M}\to \mathcal{M}\bar\otimes \mathcal{M}$ satisfying the coassociativity relation
$$
(\Delta\otimes \mathrm{id})\Delta = (\mathrm{id} \otimes \Delta)\Delta.
$$
 Assume moreover that there exists a normal faithful tracial state $\varphi$ on $\mathcal{M}$
 such that 
 $$(\varphi\otimes \mathrm{id})\Delta(x)  = (\mathrm{id}\otimes \varphi)\Delta(x)=\varphi(x)1_{\mathcal{M}}, \quad \text{ for all } x\in \mathcal{M}^+.$$ 
Associated with $\varphi$, one defines an antipode $\kappa$, which is a densely defined anti-automorphism on $\mathcal{M}$ satisfying the identity
$$
(\mathrm{id}\otimes \varphi)\big((1_{\mathcal{M}}\otimes x^*)\Delta(y)\big)=\kappa \Big((\mathrm{id}\otimes \varphi)\big(\Delta(x^*)(1_{\mathcal{M}}\otimes y)\big)\Big),
$$
for all $x,y\in \mathcal{M}$. Then the triple $\mathbb{G}=(\mathcal{M}, \Delta, \varphi)$ is called a (von Neumann
algebraic) compact quantum group of Kac type.

A (finite-dimensional) unitary representation  of $\mathbb{G}$ is a unitary $U\in \mathcal{M}\bar\otimes \mathbb{M}_{n}$ such that 
$$(\Delta \otimes \mathrm{id})(U) = \big((\Sigma \otimes \mathrm{id} )(1\otimes U)\big)(1\otimes U)\in \mathcal{M}\bar\otimes \mathcal{M}\bar\otimes \mathbb{M}_{n},$$ 
where $\Sigma$ is the flip map on $\mathcal{M}\bar\otimes \mathcal{M}$. We denote by $\mathrm{Irr}(\mathbb{G})$ the set of equivalence classes of irreducible unitary representations of $\mathbb{G}$. For each $\alpha \in \mathrm{Irr}(\mathbb{G})$, fix a representative $U_\alpha\in \mathcal{M}\bar \otimes \mathbb{M}_{n_\alpha}$. The integer $n_\alpha$ is called the dimension of $\alpha$.
The discrete dual quantum group $\widehat{\mathbb{G}} = (\widehat{\mathcal{M}},\widehat{\Delta})$ is defined as follows. The von Neumann algebra $\widehat{\mathcal{M}}$  
is given by the direct sum 
$$\widehat{\mathcal{M}} = \oplus_{\alpha\in \mathrm{Irr}(\mathbb{G})} \mathbb{M}_{n_{\alpha}}.$$ 
The dual comultiplication
$
\widehat{\Delta} : \widehat{\mathcal{M}} \to \widehat{\mathcal{M}} \,\bar{\otimes}\, \widehat{\mathcal{M}}
$
is defined by
$$
\widehat{\Delta}(y) = W (y \otimes 1) W^*, 
\quad y \in \widehat{\mathcal{M}},
$$
where $W$ is the multiplicative unitary associated with $\mathbb{G}$, given by $W=\oplus_{\alpha\in \mathrm{Irr}(\mathbb{G})} U_\alpha$.
The counit $\hat{\varepsilon}:\widehat{\mathcal{M}}\to \mathbb{C}$ is the nondegenerate $*$-homomorphism satisfying $(\hat{\varepsilon}\otimes \mathrm{id})\widehat{\Delta} = (\mathrm{id}\otimes\hat{\varepsilon})\widehat{\Delta}=\mathrm{id}$.
The dual antipode $\hat{\kappa}:\widehat{\mathcal{M}}\to \widehat{\mathcal{M}}$ is a densely defined anti-automorphism such that 
$(\mathrm{id} \otimes \widehat{\kappa})(W) = W^*$. In the case where $\mathbb{G}$ is of Kac type, $\widehat{\mathbb{G}}$ admits a tracial weight given by $\hat{\varphi}= \oplus_{\alpha\in \mathrm{Irr}(\mathbb{G})}n_{\alpha}\mathrm{Tr}_{n_{\alpha}}$, where $\mathrm{Tr}_{n_{\alpha}}$ is the normalized trace on $\mathbb{M}_{n_{\alpha}}$. Finally,  $\mathbb{G}$ and its dual $\widehat{\mathbb{G}}$ are linked by the Fourier transform $\mathcal{F}: \widehat{\mathbb{G}}\to \mathbb{G}$
defined by 
$$\mathcal{F}(x) := \sum_{\alpha\in \mathrm{Irr}(\mathbb{G})}(\mathrm{id} \otimes \hat{\varphi})\big(U_{\alpha}(1\otimes x_{\alpha})\big),\quad \forall x = \oplus_{\alpha\in \mathrm{Irr}(\mathbb{G})}x_{\alpha}\in \widehat{\mathcal{M}}.$$

We  next recall the notions of central lengths and property (RD) for discrete quantum groups.
Let $\mathbb{G} =(\mathcal{M},\Delta, \varphi)$ be a compact quantum group of Kac type with $\mathcal{M}\subseteq B(\mathcal{H})$, and denote by $\widehat{\mathbb{G}} = (\widehat{\mathcal{M}},\widehat{\Delta})$ its dual discrete quantum group. The $*$-algebra
 of affiliated (unbounded) operators with $\widehat{\mathcal{M}}$ can be identified with the algebraic product $\prod_{\alpha\in \mathrm{Irr}(\mathbb{G})} \mathbb{M}_{n_{\alpha}}$. Moreover, for every affiliated operator, the subspace $\mathcal{H}_{\rm Pol}\subseteq \mathcal{H}$ consisting of 
 matrix coefficients of finite-dimensional representations of $\mathbb{G}$, 
 is a core. 
    

\begin{defn}
    Let  $\widehat{\mathbb{G}} = (\widehat{\mathcal{M}},\widehat{\Delta})$ be a  discrete quantum group. A length $L$ on $\widehat{\mathbb{G}}$ is an (unbounded) operator  affiliated with $\widehat{\mathcal{M}}$ such that $L\ge 0$, $\hat{\varepsilon}(L) = 0$, $\hat{\kappa}(L)|_{\mathcal{H}_{\rm Pol}} = L|_{\mathcal{H}_{\rm Pol}}$ and $\widehat{\Delta}(L)\le 1\otimes L + L\otimes 1$.
    For $n\geq 1$, denoted by $q_n$ the spectral projection of $L$ associated with the interval $[n,n+1)$. The length $L$ is called  \textit{central} if each $q_n$ is central in the multiplier algebra of $\widehat{\mathcal{M}}$.
\end{defn}

\begin{defn} (\cite[Proposition and Definition 3.5]{vergnioux2007property})
    Let $L$ be a central length on a discrete quantum group $\widehat{\mathbb{G}} = (\widehat{\mathcal{M}},\widehat{\Delta})$. We say that $(\widehat{\mathbb{G}},L)$ has property (RD) if there exists a polynomial $\psi$ such that, for every $n\geq 1$ and every $a\in q_n\widehat{\mathcal{M}}$, one has $\|\mathcal{F}(a)\|\le \psi(n)\|a\|_2$.
\end{defn}
\begin{rem}
    Note that the terminology of property (RD) for a discrete quantum group $(\widehat{\mathbb{G}},L)$ differs from that of the Haagerup-type inequality satisfied by $(\widehat{\mathbb{G}},L)$. The latter, as defined at the beginning of this section, means that $P_d$ is bounded from $L_2$ to $L_\infty$ for all $d\ge 1$.
\end{rem}

Now, we introduce graph products of compact quantum groups and their dual discrete quantum groups.
Let $\Gamma$ be a finite simplicial graph. For  each $s\in V\Gamma$, let $\mathbb{G}_s = (\mathcal{M}_s,\Delta_s, \varphi_s)$ be a compact quantum group, where $\varphi_s$ is a normal faithful tracial state on the finite von Neumann algebra $\mathcal{M}_s$. 
The graph product of the family $\{\mathbb{G}_s:s\in V\Gamma\}$ is again a compact quantum group $\mathbb{G} = (\mathcal{M}_{\Gamma},\Delta, \varphi_\Gamma)$ of Kac type (see \cite[Theorem 4.4]{caspers2017graph}). Here $\mathcal{M}_{\Gamma}$ is the graph product of the family $\{\mathcal{M}_s:s\in V\Gamma\}$, $\varphi_\Gamma$ is the associated graph product state, and the comultiplication $\Delta:\mathcal{M}_{\Gamma}\to \mathcal{M}_{\Gamma}\bar\otimes \mathcal{M}_{\Gamma}$ is  determined by the condition $\Delta|_{\mathcal{M}_s} = \Delta_s$ for every $s\in V\Gamma$. 

Denote by $\widehat{\mathbb{G}}_s = (\widehat{\mathcal{M}}_s, \widehat{\Delta}_s)$ the  dual discrete quantum group of $\mathbb{G}_s$. The dual discrete quantum group $\widehat{\mathbb{G}} = (\widehat{\mathcal{M}}_{\Gamma}, \widehat{\Delta})$ associated with $\mathbb{G}$ is the direct sum of matrix algebras
    $$\widehat{\mathcal{M}}_{\Gamma}=\oplus_{k\geq 0}\widehat{\mathcal{M}}_{k},$$
  where   
    $\widehat{\mathcal{M}}_{k}={\underset{\alpha = \alpha_1\otimes\ldots\otimes \alpha_k}{\underset{\alpha\in \mathrm{Irr}(\mathbb{G})}{\oplus}}}\mathbb{M}_{n_{\alpha_1}}\otimes\ldots\otimes \mathbb{M}_{n_{\alpha_k}}$.
    For $k =0$, the index $\alpha$ corresponds to the trivial representation $\mathbf{1}$.


\medskip


We now state the main proposition of this subsection, which asserts that $P_{d}$ is bounded from $L_2(\mathbb{G})$ to $\mathbb{G}$ for all $d\geq 1$. 
As explained in the paragraph above Lemma \ref{prop;Equi-char-finite-dim-RD-1},
the graph products of compact quantum groups satisfying the assumptions in the following proposition provide examples of graph products of finite-dimensional von Neumann algebras. 

\begin{prop}\label{prop;Pd-L1-Lq-bound-quantum-group}
Let $\Gamma$, $\mathbb{G} = (\mathcal{M}_{\Gamma},\Delta)$ and  $\widehat{\mathbb{G}} = (\widehat{\mathcal{M}}_{\Gamma},\widehat{\Delta})$ be as above. Assume that $(\widehat{\mathbb{G}}_s, L_s)$ has property (RD) for every $s\in V\Gamma$, and that 
    $\widehat{\mathbb{G}}_{\Gamma_0}$ has property (RD) for every $\Gamma_0\in \mathrm{Cliq}(\Gamma)$. 
The following are equivalent.
\begin{enumerate}
    \item [(i)](\cite[Proposition 5.7]{caspers2017graph}) There exists a polynomial $\psi$ such that for all $k,l,m\geq 1$ with $|k-l|\le m\le k+l$ and $x\in \widehat{\mathcal{M}}_k$, we have $\|P_m\mathcal{F}(x)P_l\|\le \psi(k)\|x\|_2$.

    \item [(ii)] There exists a polynomial $\psi$ such that for all $k\geq 1$ and all $x\in \widehat{\mathcal{M}}_k$, $\|\mathcal{F}(x)\|\le \psi(k)\|x\|_2$.

    \item[(iii)] There exists a polynomial $\psi$ such that for all $k\geq 1$ and all $y\in \mathcal{M}_{\Gamma}$, $\|P_k(y)\|\le \psi(k)\|y\|_2$.
\end{enumerate}    
\end{prop}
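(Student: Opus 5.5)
The plan is to prove the cycle (i)$\Rightarrow$(ii)$\Rightarrow$(iii)$\Rightarrow$(i). Two observations from the preceding material will do most of the work.

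First, the Fourier transform $\mathcal{F}$ is an isometry from $L_2(\widehat{\mathbb{G}})$ onto $L_2(\mathbb{G})$ (Plancherel, Kac type). Under the decomposition $\widehat{\mathcal{M}}_\Gamma=\oplus_k\widehat{\mathcal{M}}_k$, it maps $\widehat{\mathcal{M}}_k$ onto the homogeneous part of $\mathcal{M}_\Gamma$ spanned by reduced operators of length $k$. Indeed, an irreducible representation $\alpha=\alpha_1\otimes\cdots\otimes\alpha_k$ of the graph product has matrix coefficients that are products of nontrivial coefficients from $\mathcal{M}_{s_1}^\circ,\dots,\mathcal{M}_{s_k}^\circ$ along a reduced word, and conversely these span $\mathcal{M}_k$. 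Consequently $P_k\mathcal{F}=\mathcal{F}q_k$ with $q_k$ the projection onto $\widehat{\mathcal{M}}_k$, and $\mathcal{F}(\widehat{\mathcal{M}}_k)$ is dense in $P_k(L_2(\mathcal{M}_\Gamma))$.

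For (ii)$\Rightarrow$(iii), take $y\in\mathcal{M}_\Gamma$ and choose $x\in\widehat{\mathcal{M}}_k$ with $\mathcal{F}(x)=P_k(y)$, approximating within the polynomial part if needed. Then
\[
\|P_k(y)\|=\|\mathcal{F}(x)\|\le\psi(k)\|x\|_2=\psi(k)\|P_ky\|_2\le\psi(k)\|y\|_2 .
\]
For (iii)$\Rightarrow$(ii), given $x\in\widehat{\mathcal{M}}_k$, set $y=\mathcal{F}(x)$. This $y$ lies in the range of $P_k$, so $\|\mathcal{F}(x)\|=\|P_ky\|\le\psi(k)\|y\|_2=\psi(k)\|x\|_2$. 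Both directions are routine once the identification $P_k\mathcal{F}=\mathcal{F}q_k$ is in place.

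For (ii)$\Rightarrow$(i), note that $P_m$ and $P_l$ are orthogonal projections on $\mathcal{H}=L_2(\mathcal{M}_\Gamma)$. Hence
\[
\|P_m\mathcal{F}(x)P_l\|\le\|\mathcal{F}(x)\|\le\psi(k)\|x\|_2 ,
\]
and the constraint $|k-l|\le m\le k+l$ only restricts where the estimate is needed; by Lemma~\ref{prop;Pm-Pl-finite-dim} the compression vanishes outside this range anyway. For (i)$\Rightarrow$(ii), I will repeat the proof of Lemma~\ref{prop;finite-dim-Pd-2-infty} verbatim with $\mathcal{F}(x)$ in place of $x$. Since $\mathcal{F}(x)\in P_k(\mathcal{M}_\Gamma)$, Lemma~\ref{prop;Pm-Pl-finite-dim} shows that $P_m\mathcal{F}(x)P_l\neq0$ forces $|m-k|\le l\le m+k$. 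Applying the Cauchy--Schwarz step twice then gives, for every $\xi\in\mathcal{H}$,
\[
\|\mathcal{F}(x)\xi\|_2\le(2k+1)\psi(k)\|x\|_2\|\xi\|_2 ,
\]
which is (ii) with the polynomial $(2k+1)\psi(k)$.

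The main obstacle is the first observation, namely justifying that $\mathcal{F}$ intertwines the grading of $\widehat{\mathcal{M}}_\Gamma$ with the block-length grading $P_k$ on $\mathcal{M}_\Gamma$, isometrically in $L_2$. I would verify this using the description of $\mathrm{Irr}(\mathbb{G})$ from \cite[Theorem 4.4]{caspers2017graph}, together with the orthogonality (Peter--Weyl) relations for Kac type. The latter give $\|\mathcal{F}(x)\|_2=\|x\|_2$ with respect to $\hat\varphi=\oplus n_\alpha\mathrm{Tr}_{n_\alpha}$, and show that coefficients of $U_\alpha$ for $\alpha$ of length $k$ are reduced operators of length $k$. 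The (RD) hypotheses on the $\widehat{\mathbb{G}}_s$ and $\widehat{\mathbb{G}}_{\Gamma_0}$ are not needed for the equivalence itself. They enter only through (i), which by \cite[Proposition 5.7]{caspers2017graph} is where they are used.
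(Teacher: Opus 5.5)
Your proposal is correct and follows the paper's proof: (ii)$\Leftrightarrow$(iii) comes from $\mathcal{F}$ being an $L_2$-unitary that intertwines the gradings, (ii)$\Rightarrow$(i) is the trivial compression bound, and the remaining implication is the Cauchy--Schwarz argument of Lemma~\ref{prop;finite-dim-Pd-2-infty}. You land in (ii) where the paper lands in (iii), which makes no difference; and by rerunning the argument instead of citing that lemma you use only Lemma~\ref{prop;Pm-Pl-finite-dim}, so you avoid the lemma's standing finite-dimensionality hypotheses on the vertex algebras.
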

\begin{proof}
    The implication (ii)$\implies$(i) is immediate. 
    The equivalence between (ii) and (iii) follows from the fact that $\mathcal{F}$ is a unitary map from $L_2(\widehat{\mathcal{M}}_k)$ onto $ L_2({\mathcal{M}}_k)$ and intertwines the homogeneous decompositions, while $P_k$ is a $L_2$-orthogonal projection onto the $k$-th homogeneous subspace.
    The implication (i)$\implies$(iii) follows immediately by applying Lemma \ref{prop;finite-dim-Pd-2-infty} to $\mathbb{G}$. 
\end{proof}

By Proposition \ref{thm;equivalence-Pd-Haagerup-type},  we conclude that 
under the assumptions of the preceding proposition, the boundedness of Hilbert transforms on $L_p(\mathbb{G})$ for all $1<p<\infty$ follows from \ref{Thm B}. 


\section{Application to Ozawa's compactness problems}\label{sec; Ozawa-application}
In \cite{ozawa2010comment}, Ozawa asked to what extent the $C^*$-algebraic techniques used in the proof of solidity for free group factors can be implemented directly at the level of von Neumann algebras. In particular, he posed the following question in \cite[Problem]{ozawa2010comment}.
Let $\mathbb{F}_n$ denote the free group with $n$ generators. For  $h\in \mathbb{F}_n$, let $R_h$ be the projection from $L_2(\mathcal{L}\mathbb{F}_n)$ onto the closed subspace generated by those $\lambda_g$ for which $g$ admits a factorization $g = g'h$ such that $\ell(g) = \ell(g') + \ell(h)$ where $g'\in \mathbb{F}_n$. 
 For $n\ge 2$, does the commutator $[R_h,\mathcal{L}\mathbb{F}_n]$ map the unit ball of $\mathcal{L}\mathbb{F}_n$ to a compact set in $L_2(\mathcal{L}\mathbb{F}_n)$ for every $h\in \mathbb{F}_n$?

Mei and Ricard answered this question affirmatively in \cite[Corollary 4.10]{mei2017free} by introducing a Hilbert transform $H^{Rd}_{\varepsilon}$ associated with a fixed length $d$, which satisfies the required compactness property. In this section, we establish analogous results for graph products of finite groups and for right-angled Hecke von Neumann algebras.

We first introduce a Hilbert transform that generalizes the transform $H_{\varepsilon}^{Rd}$ from the free group setting \cite{mei2017free}. Throughout this section, we assume that \(\Gamma\) is a simplicial graph satisfying \eqref{eq; Fi1}, and that \(\mathcal{M}_{\Gamma}\) is a graph product of finite-dimensional von Neumann algebras satisfying \eqref{eq;M-finite-dimentional-condition}.

\begin{defn}\label{def; d Hilbert}
	Given $d\geq 1$, denote by $\mathcal{M}_d^\circ$ the set of all reduced operators in $\mathcal{M}_{\Gamma}$ of length $d$. For $h\in \mathcal{M}_{d}^\circ$, define the following projections.
	\begin{enumerate}
		\item[(i)] Let $\mathcal{L}_h$ denote the projection from $\mathcal{M}_{\Gamma}$ onto the subspace $S_h$ generated by reduced operators $a$ satisfying $\ell(a)\ge d$ and such that, for every factorization \[
a=a'a'',
\qquad
\ell(a)=\ell(a')+\ell(a''),
\qquad
\ell(a')=d,
\]
one necessarily has $a' = h$. 
		
		\vspace{0.1cm}
		
		\item[(ii)] Let $\mathcal{L}_{C,d}$ denote the projection from $\mathcal{M}_{\Gamma}$ onto the subspace $S_C$ generated by reduced operators $a$ satisfying $\ell(a)\ge d$ and admitting two factorizations $a=a'a{''}=b'b{''}$ with $\ell(a') = \ell(b') = d$ but $a'\ne b'$.
		
		\vspace{0.1cm}
		
		\item[(iii)] Let $\mathcal{R}_h$ (resp. $\mathcal{R}_{C,d}$) denote  the projection from $\mathcal{M}_{\Gamma}$ onto the subspace $S_h^*$ (resp. $S_C^*$).

        \vspace{0.1cm}
        
		\item[(iv)] Define the Hilbert transforms $H_{\varepsilon}^{\mathcal{L},d}$ and $H_{\varepsilon}^{\mathcal{R},d}$ on $\mathcal{M}_{\Gamma}$ by
		$$H_{\varepsilon}^{\mathcal{L},d} := \varepsilon_0 P_{d-1} + \sum_{h\in \mathcal{M}_{d}^\circ} \varepsilon_h\mathcal{L}_h + {\varepsilon}_d \mathcal{L}_{C,d}\quad \text{and} \quad H_{\varepsilon}^{\mathcal{R},d} := \varepsilon_0 P_{d-1} + \sum_{h\in \mathcal{M}^\circ_{d}} \varepsilon_{h^{*}}\mathcal{R}_h + {\varepsilon}_d \mathcal{R}_{C,d},$$
		where $\varepsilon = (\varepsilon_0,\varepsilon_h, {\varepsilon}_d)_{h\in \mathcal{M}_{d}^\circ}$ with $\varepsilon_0, \varepsilon_h, {\varepsilon}_d\in \{\pm1\}$.
	\end{enumerate}
\end{defn}

By an argument similar to that in \ref{Thm A}, we have the following Cotlar identity.

\begin{thm}\label{thm; Cotlar for H^d}
	 For any reduced operators $a,b\in \mathcal{M}_{\Gamma}$, 
     we have
\begin{equation*}
	P_{> 3N+2d-2}\big(H_{\varepsilon}^{\mathcal{L},d}(a)H_{\varepsilon'}^{\mathcal{R},d}(b^*)\big) = 	P_{> 3N+2d-2}\big(H_{\varepsilon}^{\mathcal{L},d}\big(a	H_{\varepsilon'}^{\mathcal{R},d}(b^*)\big) + 	H_{\varepsilon'}^{\mathcal{R},d}\big(	H_{\varepsilon}^{\mathcal{L},d}(a)b^*\big)- 	H_{\varepsilon'}^{\mathcal{R},d}	H_{\varepsilon}^{\mathcal{L},d}(ab^*)\big).
\end{equation*}
\end{thm}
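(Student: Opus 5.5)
We follow the scheme of the proof of \ref{Thm A}, with the maximal clique prefix replaced by the set of length-$d$ prefixes. By linearity it suffices to consider reduced operators $a$ and $b^*$ with $\ell(a)=k$ and $\ell(b)=n$. We apply Proposition~\ref{prop; ab-decomposition} to obtain
\[
a=a'ca'', \qquad b^*=a''^{-1}c'b',
\]
where $c,c'$ lie in a clique algebra of length $r\le N$. We then expand $ab^*=\mathrm{I}+\mathrm{II}+\mathrm{III}$ exactly as in the proof of \ref{Thm A}. Every term in this expansion has length at most $\ell(\mathrm{I})=\ell(a')+r+\ell(b')$. If $\ell(\mathrm{I})\le 3N+2d-2$, both sides vanish after applying $P_{>3N+2d-2}$, and there is nothing to prove. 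Otherwise $\ell(a')+\ell(b')>2N+2d-2$, so either $\ell(a')>N+d-1$ or $\ell(b')>N+d-1$.

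The key combinatorial step is the following claim. If $\ell(a')\ge N+d$, then every length-$d$ prefix of $a$ is already a length-$d$ prefix of $a'$. Moreover, for every reduced operator $z$ occurring in the expansion of $a'\,(\text{anything of the form } (c_ic_i')^\circ\cdots b')$ with length $\ge d$, the set of length-$d$ prefixes of $z$ coincides with that of $a'$. The reason is as follows. A length-$d$ prefix of $a'ca''$ is built by repeatedly choosing an initial syllable. A syllable from $ca''$ (or from $b'$) can only be moved to the front past all remaining syllables of $a'$ if it commutes with each of them. After removing at most $d-1$ syllables, $a'$ still has more than $N$ syllables left, and these would all have to lie in the link of that single vertex. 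They are distinct vertices only up to repetition, so one has to argue via the clique or link bound: a syllable commuting past a block of $>N$ reduced syllables forces those syllables to lie in $\mathrm{Link}(s)$. Since consecutive equal letters are impossible in a reduced word and the block can be commuted into a word with no repeated letter in the commuting part, we get $>N$ distinct vertices, contradicting \eqref{eq; Fi1}. This is the same mechanism that gave $\mathrm{MCP}(x)=\mathrm{MCP}(a')$ in the proof of \ref{Thm A}, now iterated $d$ times.

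Given the claim, $H_{\varepsilon}^{\mathcal{L},d}$ acts on $a$ and on every long term of $ab^*$ only through the prefix data of $a'$. The term $\varepsilon_0P_{d-1}$ is invisible after $P_{>3N+2d-2}$, since all the relevant terms have length $\ge d$. Hence
\[
P_{>3N+2d-2}\big(H_\varepsilon^{\mathcal{L},d}(a)b^*\big)=P_{>3N+2d-2}\,H_\varepsilon^{\mathcal{L},d}(ab^*),
\]
which is the analogue of \eqref{eq;8}. Symmetrically, if $\ell(b')\ge N+d$, the same argument with suffixes gives the analogue of \eqref{eq;9} for $H_{\varepsilon'}^{\mathcal{R},d}$. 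We then use that $H^{\mathcal{L},d}_\varepsilon$ and $H^{\mathcal{R},d}_{\varepsilon'}$ commute with each other and with $P_{>m}$; this holds because they are spectral projections of commuting decompositions of the reduced operators. Substituting $b^*\mapsto H^{\mathcal{R},d}_{\varepsilon'}(b^*)$, resp.\ $a\mapsto H^{\mathcal{L},d}_\varepsilon(a)$, then yields the identity exactly as in \ref{Thm A}. The contribution from $\mathrm{III}$, which involves $a'\cdot b'$, is handled by the same recursive decomposition as before: we reapply Proposition~\ref{prop; ab-decomposition} to $a',b'$ until all terms have length $\le 3N+2d-2$.

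The main obstacle is making the claim rigorous for non-group algebras. There one must check that the sets $S_h$ and $S_C$ are stable under right multiplication by a tail not touching the first $N+d$ syllables. In particular, for $a\in S_h$ we need $a'$-expansions to land in $S_h$, rather than having the diagonal terms $\varphi(c_ic_i')$ in $\mathrm{II}$ shorten the word enough to create new prefixes. I would first verify this in the group case, in the spirit of Lemma~\ref{lem; Cotlar-gp}, and then extend it using that the deleted syllables lie strictly to the right of position $N+d-1$.
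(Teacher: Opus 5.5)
Your outline is the paper's own route: decompose $a,b^*$ by Proposition~\ref{prop; ab-decomposition}, reduce to $\ell(a')>N+d-1$ or $\ell(b')>N+d-1$, then argue as in Theorem A. The step everything rests on, your ``key combinatorial step'', is false. Condition \eqref{eq; Fi1} bounds the number of \emph{distinct} vertices in $\mathrm{Link}(s)$. It does not bound the length of a reduced word all of whose letters lie in $\mathrm{Link}(s)$. Your assertion that such a block ``can be commuted into a word with no repeated letter'' fails as soon as $\mathrm{Link}(s)$ contains two non-adjacent vertices $t_1,t_2$. Then $(t_1t_2)^m$ is reduced for every $m$, and $s$ commutes with each of its letters. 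So a syllable of type $s$ sitting to the right of an arbitrarily long $a'$ can still be moved to the front. The paper makes the same inference in the proof of Theorem A (``at most $N$ many $\mathcal{M}_s$'s commute, hence $\mathrm{MCP}(x)=\mathrm{MCP}(a')$'') and in Lemma~\ref{lem; Cotlar-gp}. Its proof of the present statement only refers back to that argument.

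Here is a concrete failure. Let $\Gamma$ have vertices $t_1,s,t_2$ and edges $\{t_1,s\},\{s,t_2\}$, so $N=2$. Take all vertex algebras equal to $\mathcal{L}\mathbb{Z}_2$, so $\mathcal{M}_\Gamma=\mathcal{L}W_\Gamma$, and set $d=1$, $m\ge 4$, $a=\lambda_{(t_1t_2)^m s}$, $b=\lambda_s$.

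Then $ab^*=\lambda_{(t_1t_2)^m}$ has length $2m>6=3N+2d-2$. Proposition~\ref{prop; ab-decomposition} gives $a''=\lambda_s$, $r=0$, $a'=\lambda_{(t_1t_2)^m}$, $b'=\mathbf{1}$, so you are in your case $\ell(a')>N+d-1$.

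Yet $(t_1t_2)^m s=s(t_1t_2)^m$ has the two initial letters $t_1$ and $s$, so $a\in S_C$, whereas $ab^*\in S_{\lambda_{t_1}}$. Dually, $b^*\in S_{\lambda_s}^*$ whereas $ab^*\in S_{\lambda_{t_2}}^*$.

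All operators involved are diagonal in the basis $(\lambda_g)$. So $P_{>6}$ applied to the left side minus the right side equals $(\alpha-\gamma)(\beta-\delta)\,ab^*$, where $\alpha=\varepsilon_d$ (the sign of $\mathcal{L}_{C,d}$), $\gamma=\varepsilon_{\lambda_{t_1}}$, $\beta=\varepsilon'_{\lambda_s}$ and $\delta=\varepsilon'_{\lambda_{t_2}}$. For suitable signs, even with $\varepsilon=\varepsilon'$, this is $4\,ab^*\neq0$.

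Since $m$ is arbitrary, no choice of threshold rescues the identity. It already fails for a right-angled Coxeter group, so the diagonal terms of the non-group case, which you flagged as the main obstacle, are not the real issue. Your plan to check the group case first would have exposed this.

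The same configuration sits inside any $\Gamma$ with a vertex having two non-adjacent neighbours. So the statement cannot hold in this generality without an extra hypothesis such as every $\mathrm{Link}(s)$ being a clique. That hypothesis already forces $\Gamma$ to be a disjoint union of complete graphs, which is essentially the free-product case.
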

\begin{proof}
	By Proposition \ref{prop; ab-decomposition}, $a$ and $b^*$ admit factorizations $a = a'ca{''}$ and $b^* = a{''}^{-1}c'b'$ 
    such that $ab^* = (a'c)\cdot(c'b')$, where $a',b'$ and $c,c'$ are as in Proposition \ref{prop; ab-decomposition}. 
Keep $\mathrm{I}, \mathrm{II}$ and $\mathrm{III}$ as in the proof of \ref{Thm A}. 
    Since $\ell\big((c_{1}c_{1}')^\circ\ldots (c_{r}c_{r}')^\circ\big)\le N$, if $\ell(\mathrm{I})>3N +2d-2$, 
    then either $\ell(a')> N+ d-1$ or $\ell(b')> N+ d- 1$. 
The above identity then follows from 
a similar argument to that in \ref{Thm A}. 
\end{proof}

\begin{proof}[Proof of \ref{Thm C}]
By  Proposition \ref{thm;equivalence-Pd-Haagerup-type}, $\mathcal{M}_{\Gamma}$ satisfies a Haagerup-type inequality and then $P_{\le 3N+2d-2}$ extends to a  bounded map from $L_2(\mathcal{M}_{\Gamma})$ to $\mathcal{M}_{\Gamma}$.
Therefore, by an argument similar to that in Theorem \ref{thm;Hilbert-trans-finite-vnA}, we conclude that $H_{\varepsilon}^{\mathcal{L},d}$ and $H_{\varepsilon}^{\mathcal{R},d}$ are bounded on $L_p(\mathcal{M}_{\Gamma})$ for all $1<p<\infty$ and all $d\geq 1$.   
\end{proof}

By \ref{Thm C}, we obtain positive answers to Ozawa’s problem for graph products of finite groups and right-angled Hecke von Neumann algebras. 
In particular, it follows from \cite[Notes 19.2]{davis2012geometry} that graph products of finite groups associated with
non-affine irreducible Coxeter systems are ICC groups. Furthermore, it was shown in \cite{garncarek2016factoriality} that certain right-angled Hecke von Neumann algebras are $\mathrm{II}_1$ factors. These results provide new examples beyond the free group factors originally considered by
Ozawa. In the following, we denote by $G_{\Gamma}$ a graph product of finite groups satisfying (\ref{eq;group-cardinality-finite}), and $\mathcal{M}_{\mathbf{q}}(W_{\Gamma})$ a right-angled Hecke von Neumann algebra. 
 
 \begin{cor}\label{cor; Ozawa's problem}
For any $d \geq 1$, $p > 2$, and any choice of signs $\varepsilon$, the following statements hold:
\begin{enumerate}
\item[(i)] For every $x \in C_r^*(G_{\Gamma})$, one has
\[
[H_{\varepsilon}^{\mathcal{R},d}, x] \in \mathbb{K}(\ell_2 G_{\Gamma}),
\]
and for every $x \in C_{r,\mathbf{q}}^*(W_{\Gamma})$,
\[
[H_{\varepsilon}^{\mathcal{R},d}, x] \in \mathbb{K}(\ell_2 W_{\Gamma}).
\]

\item[(ii)] For every $x \in L_p(\mathcal{L}G_{\Gamma})$, the operator $[H_{\varepsilon}^{\mathcal{R},d}, x]$ maps the closed unit ball of $\mathcal{L}G_{\Gamma}$ into a compact subset of $\ell_2 G_{\Gamma}$. In particular, this holds for all $x \in \mathcal{L}G_{\Gamma}$.

\item[(iii)] For every $x \in L_p\big(\mathcal{M}_{\mathbf{q}}(W_{\Gamma})\big)$, the operator $[H_{\varepsilon}^{\mathcal{R},d}, x]$ maps the closed unit ball of $\mathcal{M}_{\mathbf{q}}(W_{\Gamma})$ into a compact subset of $\ell_2 W_{\Gamma}$. In particular, this holds for all $x \in \mathcal{M}_{\mathbf{q}}(W_{\Gamma})$.
\end{enumerate}
 \end{cor}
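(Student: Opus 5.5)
We follow the Mei--Ricard strategy of \cite[Corollary 4.10]{mei2017free}: obtain compactness of the commutator on $L_2$ from $L_p$-boundedness of $H_{\varepsilon}^{\mathcal{R},d}$ (Theorem C) and an approximation argument. The starting observation is an algebraic locality property. Let $x=\lambda_g$ with $\ell(g)\le m$ (or $x=T_w^{(\mathbf{q})}$ with $\ell(w)\le m$). Then $[H_{\varepsilon}^{\mathcal{R},d},x]\lambda_k=0$ for all $k$ with $\ell(k)>m+d+N$. This is a right-version of the argument in Lemma \ref{lem; Cotlar-gp}. When $\ell(k)$ is large, left multiplication by $\lambda_g$ changes only the first $m+N$ blocks of $k$; the cancellation region has length at most $m$, and the commuting clique part has length at most $N$ by \eqref{eq; Fi1}. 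Meanwhile $H_{\varepsilon}^{\mathcal{R},d}$ depends only on the last $d$ blocks. By Proposition \ref{prop; ab-decomposition}, with $c,c'$ of length at most $N$, the last $d$ blocks of $gk$ and of $k$, together with the whole set of admissible length-$d$ suffixes, therefore coincide. Hence the sign assigned to $gk$ equals the sign assigned to $k$.

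For the Hecke case, the product $T_wT_v$ is a linear combination of terms $T_u$ obtained via Proposition \ref{prop; ab-decomposition}. Every such $u$ agrees with $v$ in the suffix region, so the same conclusion holds. Consequently $[H_{\varepsilon}^{\mathcal{R},d},x]=[H_{\varepsilon}^{\mathcal{R},d},x]P_{\le m+d+N}$.

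Now $P_{\le m+d+N}$ is the range projection onto a finite-dimensional subspace only if $\Gamma$ is finite, so finite rank is not automatic; this is the main obstacle. When $\Gamma$ is infinite, we would instead use the Haagerup-type inequality: $P_{\le K}$ is bounded $L_2\to L_\infty$ by Proposition \ref{thm;equivalence-Pd-Haagerup-type}. If that does not suffice, we would follow Mei--Ricard literally and assume that $\Gamma$ is finite in the groups and Hecke examples, which is the natural setting, since then $G_\Gamma$ has finitely many elements of each length and $P_{\le K}$ has finite rank. Under that finiteness, $[H_{\varepsilon}^{\mathcal{R},d},x]$ has finite rank for every $x$ in the dense $*$-algebra $\mathbb{C}G_\Gamma$ (resp. $\mathbb{C}_{\mathbf{q}}[W_\Gamma]$). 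Since $H_{\varepsilon}^{\mathcal{R},d}$ is a bounded operator on $\ell_2$, the map $x\mapsto[H_{\varepsilon}^{\mathcal{R},d},x]$ is norm continuous from the reduced $C^*$-algebra to $B(\ell_2)$. Taking norm limits gives (i).

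For (ii) and (iii), fix $p>2$ and $x\in L_p$, and choose $x_n$ in the dense algebra with $\|x-x_n\|_p\to0$. For $y$ in the unit ball of the von Neumann algebra, identified with $y\Omega\in\ell_2$, we bound
\[
\big\|[H_{\varepsilon}^{\mathcal{R},d},x-x_n]y\big\|_2\le \|H_{\varepsilon}^{\mathcal{R},d}((x-x_n)y)\|_2+\|(x-x_n)H_{\varepsilon}^{\mathcal{R},d}(y)\|_2 .
\]
The first term is at most $\|H\|_{2\to2}\|x-x_n\|_2\|y\|_\infty$. For the second we use H\"older, $\|(x-x_n)H(y)\|_2\le\|x-x_n\|_p\|H(y)\|_{q}$ with $\frac1p+\frac1q=\frac12$. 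Theorem C gives $\|H(y)\|_q\le C_q\|y\|_q\le C_q$. Hence the finite-rank operators $[H_{\varepsilon}^{\mathcal{R},d},x_n]$ converge uniformly on the unit ball to $[H_{\varepsilon}^{\mathcal{R},d},x]$, so the image of the unit ball is totally bounded, hence relatively compact. The case $x\in\mathcal{L}G_\Gamma\subseteq L_p$ is then immediate.
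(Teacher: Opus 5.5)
You have the same overall scheme as the paper: finite-rank commutators on the dense $*$-algebra, norm limits for (i), and H\"older plus $L_q$-boundedness for (ii)--(iii). The gap is the step everything rests on, namely $[H_{\varepsilon}^{\mathcal{R},d},x]\lambda_k=0$ whenever $\ell(k)>m+d+N$. That step is false.

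\textbf{Why the locality step fails.} Condition \eqref{eq; Fi1} bounds $|\mathrm{Link}(s)|$, not the length of reduced words in $G_{\mathrm{Link}(s)}$. Once $\mathrm{Link}(s)$ contains two non-adjacent vertices, a syllable of type $s$ at the \emph{left} end of $k$ can be commuted past arbitrarily many syllables to the right end. So the set of length-$d$ suffixes of $k$ is not determined by its last $d+N$ blocks, and deleting that syllable by a left multiplication changes it.

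\textbf{Counterexample.} Let $\Gamma$ be the pentagon, with vertices $s_1,\dots,s_5$ and edges $\{s_i,s_{i+1}\}$ (indices mod $5$). Take $G_\Gamma=W_\Gamma$; all standing hypotheses hold with $N=2$. Set $d=1$, $x=\lambda_{s_1}$ and $k_n=s_1(s_2s_5)^n$, a reduced word of length $2n+1$.
\begin{itemize}
\item Since $s_1$ commutes with $s_2$ and $s_5$, the possible last letters of $k_n$ are $s_1$ and $s_5$. Hence $\delta_{k_n}$ lies in the range of $\mathcal{R}_{C,1}$.
\item The element $s_1k_n=(s_2s_5)^n$ has the unique last letter $s_5$.
\end{itemize}
Writing $\varepsilon_d$ for the sign of $\mathcal{R}_{C,1}$, this gives
\[
[H_{\varepsilon}^{\mathcal{R},1},\lambda_{s_1}]\,\delta_{k_n}=(\varepsilon_{s_5}-\varepsilon_d)\,\delta_{(s_2s_5)^n}\qquad\text{for every } n .
\]
Choose $\varepsilon_{s_5}=-\varepsilon_d$. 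Then the commutator sends the orthonormal sequence $(\delta_{k_n})$ to $(\pm 2\delta_{(s_2s_5)^n})$, so it is not compact. The unitaries $\lambda_{k_n}$ lie in the unit ball of $\mathcal{L}W_\Gamma$, so (ii), and (iii) with $\mathbf{q}\equiv 1$, also fail for $x=\lambda_{s_1}$.

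\textbf{Consequence.} The paper's proof makes the same claim, in the form $[\mathcal{R}_{C,d},\lambda_g]\delta_{g'}=0$ whenever $\ell(g')\ge \ell(g)+d$, and this example contradicts it. So the statement as written fails, and your argument can only be completed under an extra hypothesis. One such hypothesis is that every $\mathrm{Link}(s)$ is a clique, equivalently that $G_{\mathrm{Link}(s)}$ is finite. Then:
\begin{itemize}
\item a syllable commutes past at most $N$ consecutive syllables, so the length-$d$ suffix set of $k$ is determined by its last $d+N$ blocks;
\item by Lemma \ref{lem;reduced-decompsition-groups}, $k$ and $gk$ share a reduced tail of length at least $\ell(k)-m$;
\item hence your threshold $m+d+N$ works.
\end{itemize}

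\textbf{Finiteness of $\Gamma$.} Your worry here is justified, and the Haagerup-type inequality cannot replace it: $L_2\to L_\infty$ boundedness of $P_{\le K}$ gives no compactness on $\ell_2$. For infinite $\Gamma$ the statement fails even with no edges. For the free product of infinitely many copies of $\mathbb{Z}_2$ and $d=2$, one gets
\[
[H_{\varepsilon}^{\mathcal{R},2},\lambda_s]\,\delta_t=(\varepsilon_{st}-\varepsilon_0)\,\delta_{st}
\]
for each of the infinitely many generators $t\neq s$. The paper uses finiteness of $\Gamma$ tacitly (``the set $\{g':\ell(g')<\ell(g)+\ell(h)\}$ is finite''). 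It is also needed so that only finitely many $\mathcal{R}_h$ occur.

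\textbf{What is correct.} Granting finite $\Gamma$ and clique links, the rest of your argument is correct and matches the paper's proof of (ii). This covers norm continuity of $x\mapsto[H,x]$ for (i), and the uniform estimate $\|[H,x-x_n]y\|_2\le(\|H\|_{2\to 2}+C_q)\|x-x_n\|_p$ for $\|y\|_\infty\le 1$.
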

 
\begin{proof}
(i) It suffices to prove that $[\mathcal{R}_h, \lambda_g] \in \mathbb{K}(\ell_2 G_{\Gamma})$ and $[\mathcal{R}_{C,d}, \lambda_g] \in \mathbb{K}(\ell_2 G_{\Gamma})$ for all $g,h \in G_{\Gamma}$. Let $g,g',h \in G_{\Gamma}$. On the one hand,\[\lambda_g \mathcal{R}_h \delta_{g'} = \lambda_g \delta_{g'} \quad \text{if } g' = h' h \text{ for some } h' \in G_{\Gamma}.\]On the other hand,\[\mathcal{R}_h \lambda_g \delta_{g'} = \lambda_g \delta_{g'} \quad \text{if } g' = g^{-1} h' h \text{ for some } h' \in G_{\Gamma}.\]It follows that\[[\mathcal{R}_h, \lambda_g]\delta_{g'} = 0 \quad \text{whenever } \ell(g') \ge \ell(g) + \ell(h).\]Since each vertex group is finite, the set\[\{ g' \in G_{\Gamma} : \ell(g') < \ell(g) + \ell(h) \}\]is finite, and therefore $[\mathcal{R}_h, \lambda_g]$ has finite rank. A similar argument shows that\[[\mathcal{R}_{C,d}, \lambda_g]\delta_{g'} = 0 \quad \text{whenever } \ell(g') \ge \ell(g) + d,\]which implies that $[\mathcal{R}_{C,d}, \lambda_g]$ also has finite rank. The corresponding statements for $[\mathcal{R}_h, T_w^{(\mathbf{q})}]$ and $[\mathcal{R}_{C,d}, T_w^{(\mathbf{q})}]$ on $\ell_2 W_{\Gamma}$ follow analogously. This proves (i).
\medskip

(ii) Let $q > 2$ be such that $\frac{1}{p} + \frac{1}{q} = \frac{1}{2}$. For $x \in L_p(\mathcal{L}G_{\Gamma})$ and $y \in L_q(\mathcal{L}G_{\Gamma})$, an argument as in \cite[Section 5]{ozawa2010comment} (see also \cite[Corollary 4.10(iii)]{mei2017free}), together with H\"older's inequality and the $L_q$-boundedness of $H_{\varepsilon}^{\mathcal{R},d}$, yields\[\|[H_{\varepsilon}^{\mathcal{R},d}, x]y\|_2\le \|H_{\varepsilon}^{\mathcal{R},d}\| \, \|xy\|_2+ \|x\|_p \, \|H_{\varepsilon}^{\mathcal{R},d} y\|_q\le 2 \|H_{\varepsilon}^{\mathcal{R},d}\| \, \|x\|_p \, \|y\|_q.\]Since $C_r^*(G_{\Gamma})$ is dense in $L_p(\mathcal{L}G_{\Gamma})$ and $\mathcal{L}G_{\Gamma} \subseteq L_l(\mathcal{L}G_{\Gamma})$ for $l = p,q$, the conclusion follows.

\medskip

(iii) The proof is identical to that of (ii), replacing $\mathcal{L}G_{\Gamma}$ and $\ell_2 G_{\Gamma}$ by $\mathcal{M}_{\mathbf{q}}(W_{\Gamma})$ and $\ell_2 W_{\Gamma}$, respectively.
\end{proof}



\section*{Acknowledgements}
The authors are grateful to \'Eric Ricard for encouraging them to extend the Mei-Ricard theory for amalgamated free products of von Neumann algebras to the setting of graph products. They also thank him for his helpful comments, which improved the presentation of this paper.

\bibliographystyle{plain}
\bibliography{graph-product}
\end{document}